\documentclass[12pt]{article}
\usepackage{amssymb,amsmath, amsthm,latexsym, verbatim, amscd}
\usepackage{here}

  \newtheorem{theorem}{Theorem}%[section] %
  \newtheorem{proposition}[theorem]{Proposition} %
  \newtheorem{lemma}[theorem]{Lemma} %

  \newtheorem{definition-theorem}[theorem]{Definition-Theorem}
\theoremstyle{definition} %  
  \newtheorem{definition}[theorem]{Definition} %
  \newtheorem{example}[theorem]{Example} %
  \newtheorem{problem}[theorem]{Problem}

  \newtheorem{fact}[theorem]{Fact}
  
  \newtheorem{conjecture}[theorem]{Conjecture}
  \newtheorem{remark}[theorem]{Remark} %

\newcommand{\invHom}[3]{\operatorname{Hom}_{#1}(#2,#3)}

%%%%%%%%%%%%%%%%%%%%%%%%%%%%%%%%%%%
\begin{document}
\title{Multiplicity in restricting minimal representations}
\author{Toshiyuki KOBAYASHI
\footnote{
Graduate School of Mathematical Sciences, 
The University of Tokyo, 
3-8-1 Komaba, Tokyo 153-8914, Japan.  
}}

\maketitle

\setcounter{section}{0}

\begin{abstract}
We discuss the action of a subgroup on small nilpotent orbits, 
 and prove a bounded multiplicity property
 for the restriction of minimal representations
 of real reductive Lie groups
 with respect to arbitrary reductive symmetric pairs.  
\end{abstract}

2020 MSC.  
Primary 22E46;
Secondary 22E45, 53D50, 58J42, 53C50
%{MSC 2020: Primary  22E46; %Semisimple Lie groups and their representations
%          Secondary %32A45, %Hyperfunctions
%                    22E45, %Representations of Lie and linear algebraic groups
%                           %over real fields; analytic methods
%53D50, %   	Geometric quantization,
%58J42, % Noncommutative global analysis, noncommutative residues,
%53C50. %  Lorentz manifolds, manifolds with indefinite metrics.
%                43A77,  %Analysis on general compact groups
%                  58F06
%}

Key words and phrases:
minimal representation, 
 branching law, 
 reductive group, 
 symmetric pair, 
coisotropic action, 
 multiplicity, 
 coadjoint orbit.  

%%%%%%%%%%%%%%%%%%%%%%%%%%%%%%%%%%%%%%%%%%%%%%
\section{Statement of main results}
\label{sec:Intro}
%%%%%%%%%%%%%%%%%%%%%%%%%%%%%%%%%%%%%%%%%%%%%%

This article is a continuation
 of our work \cite{K95, K14, K21, K22, K22PJA, KOP, xktoshima}
 that concerns the restriction 
 of irreducible representations $\Pi$ of reductive Lie groups $G$
 to reductive subgroups $G'$
 with focus on the {\it{bounded multiplicity property}}
 of the restriction $\Pi|_{G'}$
 (Definition \ref{def:bdd}).  
In this article we highlight the following specific setting:

\begin{enumerate}
\item[$\cdot$]
$(G, G')$ is an arbitrary reductive symmetric pair;
\item[$\cdot$]
$\Pi$ is of the smallest Gelfand--Kirillov dimension.  
\end{enumerate}

We refer to \cite{xKVogan2015}
 for some motivation and perspectives
 in the general branching problems, 
 see also Section \ref{sec:2}
 for some aspects regarding finite/bounded multiplicity properties
 of the restriction.

To be rigorous about \lq\lq{multiplicities}\rq\rq\
 for infinite-dimensional representations, 
 we need to fix the topology
 of the representation spaces.  
For this, 
 let $G$ be a real reductive Lie group,
 ${\mathcal{M}}(G)$ the category
 of smooth admissible representations
 of $G$
 of finite length with moderate growth, 
 which are defined on Fr{\'e}chet topological vector spaces
 \cite[Chap.~11]{WaI}.  
We denote by $\operatorname{Irr}(G)$
 the set of irreducible objects
 in ${\mathcal{M}}(G)$.

Suppose that $G'$ is a reductive subgroup in $G$.  
For $\Pi \in {\mathcal{M}}(G)$, 
 the {\it{multiplicity}}
 of $\pi \in \operatorname{Irr}(G')$
 in the restriction $\Pi|_{G'}$ is defined by 
\[
 [\Pi|_{G'}:\pi]:=\dim_{\mathbb{C}} \invHom{G'}{\Pi|_{G'}}{\pi}
 \in {\mathbb{N}} \cup \{\infty\}, 
\]
where 
$\invHom{G'}{\Pi|_{G'}}{\pi}$ denotes the space 
 of {\it{symmetry breaking operators}}, 
 {\it{i.e.}},  
 continuous $G'$-homomorphisms
 between the Fr{\'e}chet representations.  
For non-compact $G'$, 
 the multiplicity $[\Pi|_{G'}:\pi]$ may be infinite
 even when $G'$ is a maximal subgroup of $G$, 
 see Example \ref{ex:fmult} below.

By a {\it{reductive symmetric pair}} $(G,G')$, 
 we mean 
 that $G$ is a real reductive Lie group
 and that $G'$ is an open subgroup in the fixed point group $G^{\sigma}$
 of an involutive automorphism $\sigma$
 of $G$.  
The pairs $(SL(n,{\mathbb{R}}), SO(p,q))$
 with $p+q=n$, 
 $(O(p,q), O(p_1, q_1) \times O(p_2, q_2))$
 with $p_1 + p_2=p$, $q_1 + q_2=q$, 
 and the {\it{group manifold case}}
 $({}^{\backprime}G \times {}^{\backprime}G, \operatorname{diag}({}^{\backprime}G))$
 are examples.  
For a reductive symmetric pair $(G,G')$, 
 the subgroup $G'$ is maximal amongst reductive subgroups
 of $G$.

One may ask for which pair $(G,G')$
 the finite multiplicity property
\begin{equation}
\label{eqn:PP}
 [\Pi|_{G'}:\pi]<\infty, 
\quad
 {}^{\forall} \Pi \in \operatorname{Irr}(G), 
  {}^{\forall} \pi \in \operatorname{Irr}(G')
\end{equation}
 holds.  
Here are examples
 when $(G,G')$ is a reductive symmetric pair:

\begin{example}
[{\cite{K95, xKMt}}]
\label{ex:fmult}
(1)\enspace
For the symmetric pair $(SL(n,{\mathbb{R}}), SO(p,q))$
 $(p+q=n)$, 
 the finite multiplicity property \eqref{eqn:PP} holds
 if and only if one of the following conditions holds:
 $p=0$, $q=0$, or $p=q=1$.  
\par\noindent
(2)\enspace
For the pair 
 $(O(p,q), O(p_1, q_1) \times O(p_2, q_2))$
 ($p_1 + p_2=p$, $q_1 + q_2=q$), 
 the finite multiplicity property \eqref{eqn:PP} holds
 if and only if one of the following conditions holds:
 $p_1+q_1 =1$, 
 $p_2+q_2 =1$, $p=1$, or $q=1$.  
\par\noindent
(3)\enspace
For the group manifold case 
 $({}^{\backprime} G \times {}^{\backprime} G, \operatorname{diag}({}^{\backprime} G))$ where ${}^{\backprime} G$ is a simple Lie group, 
 the finite multiplicity property \eqref{eqn:PP} holds
 if and only if ${}^{\backprime} G$ is compact
 or is locally isomorphic to $SO(n,1)$.  
\end{example}

See Fact \ref{fact:KO} (2) for a geometric criterion
 of the pair $(G,G')$
 to have the finite multiplicity property \eqref{eqn:PP}.  
A complete classification of such symmetric pairs
 $(G,G')$ was accomplished
 in Kobayashi--Matsuki \cite{xKMt}.

On the other hand, 
 if we confine ourselves 
 only to \lq\lq{small}\rq\rq\ representations $\Pi$
 of $G$, 
 there will be a more chance 
 that the multiplicity  $[\Pi|_{G'}:\pi]$ becomes finite, 
 or even stronger, 
 the restriction $\Pi|_{G'}$ has the bounded multiplicity property
 in the following sense:

\begin{definition}
\label{def:bdd}
Let $\Pi \in {\mathcal{M}}(G)$.  
We say the restriction $\Pi|_{G'}$ has 
 the {\it{bounded multiplicity property}}
 if $m(\Pi|_{G'})<\infty$, 
 where we set
\begin{equation}
\label{eqn:msup}
m(\Pi|_{G'}):=\underset{\pi \in \operatorname{Irr}(G')}\sup
 \dim_{\mathbb{C}} 
 \invHom{G'}{\Pi|_{G'}}{\pi}
 \in {\mathbb{N}} \cup \{\infty\}.  
\end{equation}

\end{definition}

In the series of the papers, 
%\cite{K95, K14, K21, K22, K22PJA, KOP, xktoshima}, 
 we have explored the bounded multiplicity property
 of the restriction $\Pi|_{G'}$
 not only uniformly with respect to $\pi \in \operatorname{Irr}(G')$
 for the subgroup $G'$ 
 but also uniformly with respect to $\Pi \in {\mathcal{M}}(G)$, 
 {\it{e.g.}}, 
 either $\Pi$ runs over the whole set $\operatorname{Irr}(G)$
 \cite{K95, K14, xktoshima}
 or $\Pi$ belongs to certain family
 of \lq\lq{relatively small}\rq\rq\ representations of the group $G$
 \cite{K21, K22, K22PJA, KOP}.  
See Section \ref{sec:2}
 for some general results, 
 which tell that the smaller $\Pi$ is,
 the more subgroups $G'$ tends to satisfy 
 the bounded multiplicity property $\Pi|_{G'}$.  
In this article, 
 we highlight the extremal case
 where $\Pi$ is the \lq\lq{smallest}\rq\rq, 
 and give the bounded multiplicity theorems
 for {\it{all}} symmetric pairs $(G,G')$.

What are \lq\lq{small  representations}\rq\rq\
 amongst infinite-dimensional representations?   
For this, 
 the Gelfand--Kirillov dimension serves
 as a coarse measure of the \lq\lq{size}\rq\rq\
 of representations.  
We recall that for $\Pi \in {\mathcal{M}}(G)$
 the Gelfand--Kirillov dimension $\operatorname{DIM}(\Pi)$
 is defined as half the dimension
 of the associated variety of ${\mathcal{I}}$
 where ${\mathcal{I}}$ is the annihilator of $\Pi$
 in the universal enveloping algebra $U({\mathfrak{g}}_{\mathbb{C}})$
 of the complexified Lie algebra ${\mathfrak{g}}_{\mathbb{C}}$.  
The associated variety of ${\mathcal{I}}$ is 
 a finite union 
 of nilpotent coadjoint orbits
 in ${\mathfrak{g}}_{\mathbb{C}}^{\ast}$.

We recall for a complex simple Lie algebra ${\mathfrak{g}}_{\mathbb{C}}$, 
 there exists a unique non-zero minimal nilpotent
 $(\operatorname{Int}{\mathfrak{g}}_{\mathbb{C}})$-orbit
 in ${\mathfrak{g}}_{\mathbb{C}}^{\ast}$, 
 which we denote by ${\mathbb{O}}_{\operatorname{min}, {\mathbb{C}}}$.  
The dimension of ${\mathbb{O}}_{\operatorname{min}, {\mathbb{C}}}$
 is known as below, 
 see \cite{C93} for example.  
We set
 $n({\mathfrak{g}}_{\mathbb{C}})$
 to be half the dimension of ${\mathbb{O}}_{\operatorname{min}, {\mathbb{C}}}$.

\begin{figure}[H]%[htb]
\begin{center}
\begin{tabular}{c|ccccccccc}
%\hline
${\mathfrak{g}}_{\mathbb{C}}$
& $A_n$
& $B_n$ $(n \ge 2)$
& $C_n$
& $D_n$
& ${\mathfrak{g}}_{2}^{\mathbb{C}}$
& ${\mathfrak{f}}_{4}^{\mathbb{C}}$
& ${\mathfrak{e}}_{6}^{\mathbb{C}}$
& ${\mathfrak{e}}_{7}^{\mathbb{C}}$
& ${\mathfrak{e}}_{8}^{\mathbb{C}}$
\\
\hline
$n({\mathfrak{g}}_{\mathbb{C}})$
& $n$
& $2n-2$
& $n$
& $2n-3$
& $3$
& $8$
& $11$
& $17$
& $29$
\\
%\hline
\end{tabular}
\end{center}
%\caption{}
%\label{figair}
\end{figure}

For the rest of this section, 
 let $G$ be a non-compact connected simple Lie group
 without complex structure.  
This means
 that the complexified Lie algebra ${\mathfrak{g}}_{\mathbb{C}}$
 is still a simple Lie algebra.  
By the definition, 
 the Gelfand--Kirillov dimension has the following property:
$\operatorname{DIM}(\Pi)=0$
 $\iff$ $\Pi$ is finite-dimensional, 
  and 
\begin{equation}
\label{eqn:GKbdd}
  n({\mathfrak{g}}_{\mathbb{C}}) \le \operatorname{DIM}(\Pi)
  \le \frac 1 2 (\dim{\mathfrak{g}}-\operatorname{rank}{\mathfrak{g}}), 
\end{equation}
 for any infinite-dimensional $\Pi \in \operatorname{Irr}(G)$.
In this sense, 
 if $\Pi \in \operatorname{Irr}(G)$ satisfies
 $\operatorname{DIM}(\Pi)=n({\mathfrak{g}}_{\mathbb{C}})$, 
 then such $\Pi$ is thought
 of as the \lq\lq{smallest}\rq\rq\
 amongst infinite-dimensional irreducible representations of $G$.  

%\begin{remark}
%\label{rem:realmin}
%The inequality \eqref{eqn:GKbdd} is not sharp.  
%In fact, 
% some simple Lie groups $G$ do not have 
% irreducible representations $\Pi$
% with $\operatorname{DIM}(\Pi)=n({\mathfrak{g}}_{\mathbb{C}})$, 
% in particular, 
% if ${\mathbb{O}}_{\operatorname{min}, {\mathbb{C}}}
% \cap {\mathfrak{g}}^{\ast}
% = \emptyset$, 
% see Section \ref{subsec:realmin}
% for further discussions.  
%\end{remark}

In this article, 
 we prove the following bounded multiplicity theorem
 of the restriction:

\begin{theorem}
\label{thm:Joseph}
If the Gelfand--Kirillov dimension
 of $\Pi \in \operatorname{Irr}(G)$ is $n({\mathfrak{g}}_{\mathbb{C}})$, 
 then $m(\Pi|_{G'})<\infty$ 
 for any symmetric pair $(G,G')$.    
\end{theorem}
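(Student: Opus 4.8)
The plan is to split the argument into a representation-theoretic reduction and a piece of symplectic geometry on the minimal nilpotent orbit.

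\emph{Step 1: identifying the associated variety.} By definition $\operatorname{DIM}(\Pi)$ is half the dimension of the associated variety $\mathcal{V}(\operatorname{Ann}\Pi)\subset\mathfrak g_{\mathbb C}^{\ast}$, an $(\operatorname{Int}\mathfrak g_{\mathbb C})$-stable closed conical subset of the nilpotent cone, hence a finite union of nilpotent coadjoint orbit closures, all of dimension $2n(\mathfrak g_{\mathbb C})$. Since ${\mathbb O}_{\operatorname{min},{\mathbb C}}$ is the unique nonzero nilpotent orbit of that dimension and the variety contains $0$, the hypothesis $\operatorname{DIM}(\Pi)=n(\mathfrak g_{\mathbb C})$ forces $\mathcal{V}(\operatorname{Ann}\Pi)=\overline{{\mathbb O}_{\operatorname{min},{\mathbb C}}}$. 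Thus $\Pi$ enters only through this equality --- in the language of primitive ideals, $\operatorname{Ann}\Pi$ is the Joseph ideal, with finitely many further possibilities in types $A$ and $C$ --- and everything else about $\Pi$ may be forgotten. This is also where the hypothesis ``$G$ simple, without complex structure'' (so that $\mathfrak g_{\mathbb C}$ is simple and ${\mathbb O}_{\operatorname{min},{\mathbb C}}$ makes sense) is used.

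\emph{Step 2: reducing to a coisotropy statement.} Write $\mathfrak g_{\mathbb C}=\mathfrak g'_{\mathbb C}\oplus\mathfrak q_{\mathbb C}$ for the $\pm1$-eigenspace decomposition of $d\sigma$ and identify $\mathfrak g_{\mathbb C}\cong\mathfrak g_{\mathbb C}^{\ast}$ by the Killing form, so that the restriction map $p\colon\mathfrak g_{\mathbb C}^{\ast}\to(\mathfrak g'_{\mathbb C})^{\ast}$ becomes the projection along $\mathfrak q_{\mathbb C}$ and $p^{-1}(0)=\mathfrak q_{\mathbb C}$. I would then invoke the geometric criterion developed in the earlier papers of this series (see Section \ref{sec:2}, and \cite{K21, K22, K22PJA, KOP}), which bounds $m(\Pi|_{G'})$ through the characteristic cycles of symmetry breaking operators and reduces the bounded multiplicity property of $\Pi|_{G'}$ to the assertion that \emph{the $G'_{\mathbb C}$-action on ${\mathbb O}_{\operatorname{min},{\mathbb C}}$ is coisotropic for the Kirillov--Kostant--Souriau form}. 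Equivalent reformulations to keep in mind: $G'_{\mathbb C}$ has a dense orbit on a generic fibre of $p|_{{\mathbb O}_{\operatorname{min},{\mathbb C}}}$; the Poisson algebra $\mathbb C[\overline{{\mathbb O}_{\operatorname{min},{\mathbb C}}}]^{G'_{\mathbb C}}$ is commutative (equivalently, the $G'_{\mathbb C}$-isotypic multiplicities in $\mathbb C[\overline{{\mathbb O}_{\operatorname{min},{\mathbb C}}}]$ are uniformly bounded); most concretely, $\overline{{\mathbb O}_{\operatorname{min},{\mathbb C}}}\cap\mathfrak q_{\mathbb C}$ is isotropic, i.e. of dimension at most $n(\mathfrak g_{\mathbb C})$, and a finite union of $G'_{\mathbb C}$-orbits. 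The finiteness is automatic (a complex nilpotent orbit meets $\mathfrak q_{\mathbb C}$ in finitely many $G'_{\mathbb C}$-orbits, Kostant--Sekiguchi), so the content is the dimension bound.

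\emph{Step 3: the geometric claim.} It remains to show that for every reductive symmetric pair $(G,G')$ with $G$ simple, noncompact, without complex structure, the group $G'_{\mathbb C}$ acts coisotropically on ${\mathbb O}_{\operatorname{min},{\mathbb C}}$. Here I would use the very explicit model of $\overline{{\mathbb O}_{\operatorname{min},{\mathbb C}}}$: the affine cone over the adjoint variety $G_{\mathbb C}/Q$ ($Q$ the maximal parabolic of the highest root), and, for the classical algebras, the variety of matrices of rank $\le1$ or $\le2$ of the relevant symmetry type, which makes $\overline{{\mathbb O}_{\operatorname{min},{\mathbb C}}}\cap\mathfrak q_{\mathbb C}$ directly computable. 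Two complementary routes: (a) a uniform one, combining the sphericity of $\overline{{\mathbb O}_{\operatorname{min},{\mathbb C}}}$ as a $G_{\mathbb C}$-variety (Panyushev) with the fact that, for a symmetric subgroup, $\mathfrak q_{\mathbb C}$ is a sum of at most two irreducible $\operatorname{ad}(\mathfrak g'_{\mathbb C})$-submodules, to force $\dim\!\left(\overline{{\mathbb O}_{\operatorname{min},{\mathbb C}}}\cap\mathfrak q_{\mathbb C}\right)\le n(\mathfrak g_{\mathbb C})$; (b) a case-by-case check along the classification of reductive symmetric pairs (\cite{xKMt}), exhibiting in each case either an element of ${\mathbb O}_{\operatorname{min},{\mathbb C}}$ with open $G'_{\mathbb C}$-orbit or directly the inequality $2\dim\!\left(\overline{{\mathbb O}_{\operatorname{min},{\mathbb C}}}\cap\mathfrak q_{\mathbb C}\right)\le\dim{\mathbb O}_{\operatorname{min},{\mathbb C}}$. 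In the Hermitian cases, where $\mathfrak q_{\mathbb C}=\mathfrak q_{\mathbb C}^{+}\oplus\mathfrak q_{\mathbb C}^{-}$ and $\Pi|_{G'}$ may be discretely decomposable, coisotropy can alternatively be verified on the associated variety of $\Pi$ regarded as a $K'_{\mathbb C}$-variety.

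The crux is Step 3 made uniform: one must rule out the failure of the dimension bound for \emph{all} symmetric subgroups simultaneously, and the bookkeeping is tight precisely in the small-rank and equal-rank boundary cases, where $\dim\mathfrak g'_{\mathbb C}$ is comparable to $2n(\mathfrak g_{\mathbb C})$, so that whether $G'_{\mathbb C}$ ``just barely'' acquires a dense orbit on a generic fibre is a genuine computation rather than a dimension triviality. A secondary point is to state the criterion of Step 2 in a form robust enough to cover non-unitary $\Pi$ and the cases where $\mathfrak g'$ is not simple, which is exactly why one works with characteristic cycles rather than with explicit unitary models.
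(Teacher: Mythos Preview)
Your Steps 1 and 2 are correct and match the paper's architecture. The coisotropy-implies-bounded-multiplicity criterion you want is exactly Fact~\ref{fact:Ki}, which is due to Kitagawa~\cite{Ki} rather than to the earlier papers in this series. (One harmless slip: in type $A$ there are continuously many, not finitely many, primitive ideals with associated variety $\overline{\mathbb O_{\operatorname{min},\mathbb C}}$; but only the associated variety enters Fact~\ref{fact:Ki}, so this does not matter.)

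Step 3 is where you part ways with the paper, and as you yourself flag, neither of your routes is actually a proof: route (a) does not explain how $G_{\mathbb C}$-sphericity of $\overline{\mathbb O_{\operatorname{min},\mathbb C}}$ controls the $G'_{\mathbb C}$-action, and route (b) is the case check you would like to avoid. The paper's argument is different from both and is uniform. Since coisotropy depends only on the complexified pair, one replaces the given real form by a real form $\mathfrak g_{\mathbb R}$ of $\mathfrak g_{\mathbb C}$ for which $\mathfrak g'_{\mathbb C}=\mathfrak k_{\mathbb C}$ is the complexification of a \emph{maximal compact} subalgebra, and then splits on whether $\mathbb O_{\operatorname{min},\mathbb C}\cap\mathfrak g_{\mathbb R}$ is empty. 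If it is (the five real forms in Proposition~\ref{prop:OKO}), one shows via $\theta\beta\ne-\beta$ that $\mathfrak h_{\mathbb C}\subset\mathfrak k_{\mathbb C}+Z_{\mathfrak g_{\mathbb C}}(X)$ for a highest-root vector $X$, whence $K_{\mathbb C}$ has an open orbit on $\mathbb O_{\operatorname{min},\mathbb C}$ and coisotropy is automatic. If not, then $\mathbb O_{\operatorname{min},\mathbb C}=\operatorname{Ad}(G_{\mathbb C})X$ with $X$ a highest \emph{restricted}-root vector; the Iwasawa decomposition yields $G_{\mathbb R}=K_{\mathbb R}\exp(\mathbb R A_{\mu})(G_{\mathbb R})_X$, so $\mathbb R_{>0}X$ is a one-dimensional slice for the $K_{\mathbb R}$-action. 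On that slice the coisotropy condition of Lemma~\ref{lem:Hcoiso} reads $(\mathfrak k+Z_{\mathfrak g}(X))^{\perp B}\subset[X,\mathfrak k]$; the left side is contained in $\mathbb R A_{\mu}$, and $A_{\mu}\in[X,\mathfrak k]$ because $(A_{\mu},X,c\,\theta X)$ is an $\mathfrak{sl}_2$-triple. Complexifying finishes. So the missing idea is not a dimension estimate on $\overline{\mathbb O_{\operatorname{min},\mathbb C}}\cap\mathfrak q_{\mathbb C}$, but the passage to a well-chosen real form together with an explicit one-parameter slice coming from the highest restricted root; this reduces coisotropy to a single $\mathfrak{sl}_2$ computation and bypasses any classification of symmetric pairs.
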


For $\Pi_1, \Pi_2 \in \operatorname{Irr}(G)$, 
 we consider the tensor product representation
 $\Pi_1 \otimes \Pi_2$, 
 and define the upper bound of the multiplicity
 in $\Pi_1 \otimes \Pi_2$ by 
\[
m(\Pi_1 \otimes \Pi_2):=
\underset{\Pi \in \operatorname{Irr}(G)} \sup \dim_{\mathbb{C}}
\invHom{G}
{\Pi_1 \otimes \Pi_2}
{\Pi}
 \in {\mathbb{N}} \cup \{\infty\}.
\]

The tensor product representation 
 of two representations is a special case
 of the restriction
 with respect to symmetric pairs.  
We also prove
 the bounded multiplicity property
 of the tensor product:

\begin{theorem}
\label{thm:tensormin}
If the Gelfand--Kirillov dimensions of 
 $\Pi_1, \Pi_2 \in \operatorname{Irr}(G)$
 are $n({\mathfrak{g}}_{\mathbb{C}})$, 
 then one has 
$m(\Pi_1 \otimes \Pi_2)<\infty$.  
\end{theorem}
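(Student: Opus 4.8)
The plan is to deduce Theorem~\ref{thm:tensormin} from the geometric mechanism underlying Theorem~\ref{thm:Joseph}, now applied to the group manifold symmetric pair $(G \times G, \operatorname{diag} G)$, for which $\Pi_1 \otimes \Pi_2$ is literally the restriction $(\Pi_1 \boxtimes \Pi_2)|_{\operatorname{diag} G}$. Two caveats: $G \times G$ is reductive but not simple, and the external tensor product $\Pi_1 \boxtimes \Pi_2$, of Gelfand--Kirillov dimension $2 n({\mathfrak{g}}_{\mathbb{C}})$, is not of the smallest dimension for $G \times G$, so Theorem~\ref{thm:Joseph} does not apply verbatim. Instead I would invoke the geometric criterion for bounded multiplicity recorded in Section~\ref{sec:2} --- that the Hamiltonian action of the complexified subgroup on the associated variety of the ambient representation be coisotropic --- which is formulated for arbitrary reductive pairs and so covers the present case.

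The first step is to pin down the associated variety. Because $\operatorname{DIM}(\Pi_i) = n({\mathfrak{g}}_{\mathbb{C}})$ and every nonzero nilpotent coadjoint orbit of ${\mathfrak{g}}_{\mathbb{C}}$ has dimension at least $\dim {\mathbb{O}}_{\operatorname{min}, {\mathbb{C}}} = 2 n({\mathfrak{g}}_{\mathbb{C}})$, the associated variety of $\operatorname{Ann}(\Pi_i)$ --- a union of nilpotent orbit closures of dimension $2 n({\mathfrak{g}}_{\mathbb{C}})$ --- is forced to be $\overline{{\mathbb{O}}_{\operatorname{min}, {\mathbb{C}}}}$. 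Hence the associated variety of $\Pi_1 \boxtimes \Pi_2$ is $\overline{{\mathbb{O}}_{\operatorname{min}, {\mathbb{C}}}} \times \overline{{\mathbb{O}}_{\operatorname{min}, {\mathbb{C}}}}$ in ${\mathfrak{g}}_{\mathbb{C}}^{\ast} \oplus {\mathfrak{g}}_{\mathbb{C}}^{\ast}$, and the theorem is reduced to showing that the diagonal $G_{\mathbb{C}}$-action on the open orbit ${\mathbb{O}}_{\operatorname{min}, {\mathbb{C}}} \times {\mathbb{O}}_{\operatorname{min}, {\mathbb{C}}}$, equipped with the sum of the two Kirillov--Kostant--Souriau forms, is coisotropic; equivalently, that for the moment map $\mu(\xi_1, \xi_2) = \xi_1 + \xi_2$ of this action the generic fibre $\mu^{-1}(\xi) \cap \bigl(\overline{{\mathbb{O}}_{\operatorname{min}, {\mathbb{C}}}} \times \overline{{\mathbb{O}}_{\operatorname{min}, {\mathbb{C}}}}\bigr)$ is a single orbit of the isotropy group $G_{\mathbb{C}}^{\xi}$.

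To verify coisotropy I would combine the standard symplectic criterion --- a Hamiltonian reductive-group action on a symplectic variety is coisotropic if and only if its invariant rational functions Poisson-commute, which is automatic once they have transcendence degree $\le 1$ --- with one classical structural input: $\mathbb{C}[\overline{{\mathbb{O}}_{\operatorname{min}, {\mathbb{C}}}}] = \bigoplus_{k \ge 0} V(k\theta)$ is multiplicity-free as a $G_{\mathbb{C}}$-module, where $\theta$ is the highest root and the right-hand side is the coordinate ring of the affine cone over the adjoint variety (this is classical; equivalently, the minimal nilpotent orbit is a spherical $G_{\mathbb{C}}$-variety). Since $V(k\theta)$ is self-dual, the ring of $\operatorname{diag}(G_{\mathbb{C}})$-invariants in $\mathbb{C}[\overline{{\mathbb{O}}_{\operatorname{min}, {\mathbb{C}}}}] \otimes \mathbb{C}[\overline{{\mathbb{O}}_{\operatorname{min}, {\mathbb{C}}}}]$ is one-dimensional in each bidegree $(k,k)$ and vanishes otherwise; a graded domain with one-dimensional graded pieces is a polynomial ring in one variable. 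Thus the ring of regular invariants has Krull dimension one, and --- provided the generic diagonal orbit is already separated by regular invariants --- the field of rational invariants has transcendence degree one, which yields coisotropy and hence, by the criterion of Section~\ref{sec:2}, $m(\Pi_1 \otimes \Pi_2) < \infty$.

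The main obstacle is exactly that proviso: passing from the one-dimensionality of the ring of \emph{regular} invariants on $\overline{{\mathbb{O}}_{\operatorname{min}, {\mathbb{C}}}} \times \overline{{\mathbb{O}}_{\operatorname{min}, {\mathbb{C}}}}$ to the coisotropy of the generic orbit on the \emph{open} stratum ${\mathbb{O}}_{\operatorname{min}, {\mathbb{C}}} \times {\mathbb{O}}_{\operatorname{min}, {\mathbb{C}}}$, i.e.\ excluding rational invariants concentrated on the boundary. Concretely it comes down to showing that for generic $\xi \in {\mathbb{O}}_{\operatorname{min}, {\mathbb{C}}} + {\mathbb{O}}_{\operatorname{min}, {\mathbb{C}}}$ the symplectic reduction at $\xi$ is a single point, i.e.\ the decompositions $\xi = \xi_1 + \xi_2$ with $\xi_i \in {\mathbb{O}}_{\operatorname{min}, {\mathbb{C}}}$ form one orbit of $G_{\mathbb{C}}^{\xi}$; I expect to establish this from the explicit model of the minimal orbit --- rank-one nilpotent matrices in the classical series, where a direct computation (as for ${\mathfrak{g}}_{\mathbb{C}} = {\mathfrak{sl}}_m$, using the torus in $G_{\mathbb{C}}^{\xi}$ to absorb the one free parameter) settles it, and uniformly through the long-root ${\mathfrak{sl}}_2$-triple, with the finitely many exceptional algebras checked case by case if needed. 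A last bookkeeping remark is that the Section~\ref{sec:2} criterion, although stated for restriction to a subgroup, does not see whether the overgroup is simple, so no adjustment is needed in order to apply it to $G \times G$.
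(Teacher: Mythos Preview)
Your reduction is exactly the paper's: view $\Pi_1\otimes\Pi_2$ as the restriction of $\Pi_1\boxtimes\Pi_2$ to $\operatorname{diag}G$, observe that the associated variety of the annihilator is $\overline{{\mathbb{O}}_{\operatorname{min},{\mathbb{C}}}}\times\overline{{\mathbb{O}}_{\operatorname{min},{\mathbb{C}}}}$, and invoke Kitagawa's criterion (Fact~\ref{fact:Ki}) to reduce to showing that the diagonal $G_{\mathbb{C}}$-action on ${\mathbb{O}}_{\operatorname{min},{\mathbb{C}}}\times{\mathbb{O}}_{\operatorname{min},{\mathbb{C}}}$ is coisotropic. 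The divergence is only in how coisotropy is verified.

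The paper does this by a short uniform Lie-algebraic computation (Theorem~\ref{thm:22030921}(1)): fix nonzero $X\in{\mathfrak{g}}_{\mathbb{C}}({\mathfrak{h}}_{\mathbb{C}};\beta)$ and $Y\in{\mathfrak{g}}_{\mathbb{C}}({\mathfrak{h}}_{\mathbb{C}};-\beta)$, take the one-parameter slice $S=\exp\mathbb{C}(H_\beta,-H_\beta)\cdot(X,Y)$, and use the open Bruhat cell $N^+_{\mathbb{C}}H_{\mathbb{C}}N^-_{\mathbb{C}}\subset (G_{\mathbb{C}})_X\exp(\mathbb{C}H_\beta)(G_{\mathbb{C}})_Y$ to see that $\operatorname{diag}(G_{\mathbb{C}})\cdot S$ is open dense. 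At the basepoint one reads off from the explicit centralizers that $(\operatorname{diag}({\mathfrak{g}}_{\mathbb{C}})+Z_{{\mathfrak{g}}_{\mathbb{C}}}(X)\oplus Z_{{\mathfrak{g}}_{\mathbb{C}}}(Y))^{\perp B}=\mathbb{C}(H_\beta,-H_\beta)$, and since $[X,Y]\in\mathbb{C}^\times H_\beta$ one has $(H_\beta,-H_\beta)=[(X,Y),(X+Y,X+Y)]\in[(X,Y),\operatorname{diag}({\mathfrak{g}}_{\mathbb{C}})]$. This is the coisotropy condition of Lemma~\ref{lem:Hcoiso}, and the argument is type-independent.

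Your route through $\mathbb{C}[\overline{{\mathbb{O}}_{\operatorname{min},{\mathbb{C}}}}]=\bigoplus_{k\ge0}V(k\theta)$ and the resulting one-variable polynomial invariant ring is correct as far as it goes, and is morally the same phenomenon (the one generator is exactly the function whose level sets the slice $S$ parametrizes). But the gap you flag is real: generic diagonal orbits in $\overline{{\mathbb{O}}_{\operatorname{min},{\mathbb{C}}}}\times\overline{{\mathbb{O}}_{\operatorname{min},{\mathbb{C}}}}$ are \emph{not} closed (they degenerate to the zero fiber), so Krull dimension~$1$ of regular invariants does not by itself force transcendence degree~$1$ of rational invariants. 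Your fallback to moment-map fibers and case-by-case checks would work, but is unnecessary: the paper's Bruhat-cell slice already shows generic diagonal orbits have codimension~$\le 1$, uniformly, which is precisely the missing step in your argument and simultaneously gives coisotropy directly. So rather than routing through invariant theory and then patching with casework, you can replace that entire block by the two-line slice computation above.
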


\begin{remark}
Since the upper bound 
 of the multiplicity $m(\Pi|_{G'})$ is defined
 in the category 
 of admissible representations
 of moderate growth, 
 $m(\Pi|_{G'})$ also gives an upper bound
 in the category of unitary representations
 where the multiplicity in the direct integral
 of irreducible unitary representations is defined
 as a measurable function
 on the unitary dual of the subgroup $G'$.  
\end{remark}

These results apply to \lq\lq{minimal representations}\rq\rq\
 of $G$, 
 which we recall now.  
For a complex simple Lie algebra
 ${\mathfrak{g}}_{\mathbb{C}}$
 other than ${\mathfrak{s l}}(n,{\mathbb{C}})$, 
 Joseph \cite{J79}
 constructed a completely prime two-sided primitive ideal ${\mathcal{J}}$
 in $U({\mathfrak{g}}_{\mathbb{C}})$, 
whose associated variety
 is 
 the closure of the minimal nilpotent orbit
 ${\mathbb{O}}_{\operatorname{min}, {\mathbb{C}}}$.  
See also \cite{GS04}.

\begin{definition}
[minimal representation, 
 see {\cite{GS05}}]
\label{def:minrep}
An irreducible admissible representation $\Pi$ of $G$
 is called a {\it{minimal representation}}
 if the annihilator
 of the $U({\mathfrak{g}}_{\mathbb{C}})$-module $\Pi$
 is the Joseph ideal ${\mathcal{J}}$ of $U({\mathfrak{g}}_{\mathbb{C}})$.  
\end{definition}

The two irreducible components
 of the Segal--Shale--Weil representation are classical examples
 of a minimal representation
 of the metaplectic group $M p(n,{\mathbb{R}})$, 
 the connected double cover
 of the real symplectic group $S p(n,{\mathbb{R}})$, 
 which play a prominent role 
 in number theory.  
The solution space of the Yamabe Laplacian on ${\operatorname{S}}^p \times {\operatorname{S}}^q$ gives 
 the minimal representation of the conformal transformation group $O(p+1,q+1)$
 when $p+q$ $(\ge 6)$ is even (\cite{KOr}).  
In general, 
 there are at most four minimal representations
 for each connected simple Lie group $G$ if exist, 
 and they were classified \cite{GS05, Ta}.

By the definition of the Joseph ideal, 
 one has  $\operatorname{DIM}(\Pi)=n({\mathfrak{g}}_{\mathbb{C}})$
 if $\Pi$ is a minimal representation.  
Thus Theorems \ref{thm:Joseph} and \ref{thm:tensormin} imply the following:

\begin{theorem}
\label{thm:minbdd}
Let $\Pi$ be a minimal representation of $G$.  
Then the restriction $\Pi|_{G'}$ has the bounded multiplicity property 
 $m(\Pi|_{G'})<\infty$
 for any symmetric pair $(G,G')$.  
\end{theorem}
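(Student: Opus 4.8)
The plan is to deduce Theorem~\ref{thm:minbdd} as an immediate consequence of Theorem~\ref{thm:Joseph} together with the basic Dixmier-theory fact that the Joseph ideal realizes the minimal Gelfand--Kirillov dimension. Concretely, let $\Pi$ be a minimal representation of $G$ in the sense of Definition~\ref{def:minrep}, so that by definition the annihilator ${\mathcal{I}}$ of the $U({\mathfrak{g}}_{\mathbb{C}})$-module $\Pi$ is the Joseph ideal ${\mathcal{J}}$. Since the associated variety of ${\mathcal{J}}$ is the closure $\overline{{\mathbb{O}}_{\operatorname{min}, {\mathbb{C}}}}$ of the minimal nilpotent orbit, its dimension is $2\,n({\mathfrak{g}}_{\mathbb{C}})$, and hence $\operatorname{DIM}(\Pi) = n({\mathfrak{g}}_{\mathbb{C}})$ by the definition of the Gelfand--Kirillov dimension recalled just before \eqref{eqn:GKbdd}. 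This is exactly the hypothesis of Theorem~\ref{thm:Joseph}.

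First I would note that for Theorem~\ref{thm:minbdd} to be stated for \emph{all} simple $G$, one must momentarily worry about the case ${\mathfrak{g}}_{\mathbb{C}} = {\mathfrak{s l}}(n,{\mathbb{C}})$, which Joseph's construction excludes; however, the statement is about whichever minimal representations exist, and the classification \cite{GS05, Ta} together with Definition~\ref{def:minrep} pins down precisely that $\operatorname{DIM}(\Pi) = n({\mathfrak{g}}_{\mathbb{C}})$ holds for all of them. So in every case the GK-dimension hypothesis of Theorem~\ref{thm:Joseph} is met. Then I would simply invoke Theorem~\ref{thm:Joseph}: for any symmetric pair $(G,G')$ one obtains $m(\Pi|_{G'}) < \infty$, which is the assertion of the bounded multiplicity property in Definition~\ref{def:bdd}. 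The tensor product statement is subsumed since, as the paper notes, $\Pi_1 \otimes \Pi_2$ restricted to $\operatorname{diag}(G)$ is the restriction with respect to the symmetric pair $(G \times G, \operatorname{diag} G)$, and $\operatorname{DIM}(\Pi_1) = \operatorname{DIM}(\Pi_2) = n({\mathfrak{g}}_{\mathbb{C}})$ for minimal $\Pi_i$, so Theorem~\ref{thm:tensormin} likewise applies; but this is not even needed for the displayed claim, which only concerns $\Pi|_{G'}$.

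There is essentially no obstacle here: the entire content of Theorem~\ref{thm:minbdd} is logical — translating the ideal-theoretic definition of ``minimal representation'' into the numerical condition $\operatorname{DIM}(\Pi) = n({\mathfrak{g}}_{\mathbb{C}})$ and then quoting Theorem~\ref{thm:Joseph}. The only point requiring a sentence of care is making sure the identification $\dim \overline{{\mathbb{O}}_{\operatorname{min}, {\mathbb{C}}}} = 2\,n({\mathfrak{g}}_{\mathbb{C}})$ is the one used to define $\operatorname{DIM}$, and that the Joseph ideal is the full annihilator (which is built into Definition~\ref{def:minrep}). I would write the proof in two or three lines accordingly, with the real mathematical work having already been carried out in the proof of Theorem~\ref{thm:Joseph} (the analysis of the subgroup action on the minimal nilpotent orbit and the associated coisotropy/bounded-multiplicity criterion alluded to in the abstract and Section~\ref{sec:2}).
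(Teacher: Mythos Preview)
Your proposal is correct and matches the paper's approach exactly: the paper derives Theorem~\ref{thm:minbdd} in one sentence by observing that, by definition of the Joseph ideal, a minimal representation satisfies $\operatorname{DIM}(\Pi)=n({\mathfrak{g}}_{\mathbb{C}})$, whence Theorem~\ref{thm:Joseph} applies. Your additional remarks about the ${\mathfrak{sl}}(n,\mathbb{C})$ exclusion and the tensor product case are sound but not needed for the bare statement, as you yourself note.
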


\begin{theorem}
\label{thm:tensor}
Let $\Pi_1$, $\Pi_2$ be minimal representations of $G$.  
Then the tensor product representation has the bounded multiplicity property
 $m(\Pi_1 \otimes \Pi_2)<\infty$.  
\end{theorem}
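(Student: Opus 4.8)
The plan is to obtain Theorem~\ref{thm:tensor} as a direct corollary of Theorem~\ref{thm:tensormin}. The only input needed is that a minimal representation $\Pi$ of $G$ has Gelfand--Kirillov dimension exactly $n(\mathfrak{g}_{\mathbb{C}})$, a point already noted above; the argument is immediate. By Joseph's construction the ideal $\mathcal{J}$ is a (completely prime) primitive ideal of $U(\mathfrak{g}_{\mathbb{C}})$ whose associated variety is the irreducible variety $\overline{\mathbb{O}_{\operatorname{min},\mathbb{C}}}$, of dimension $2\,n(\mathfrak{g}_{\mathbb{C}})$. Since $\operatorname{DIM}(\Pi)$ is by definition half the dimension of the associated variety of the annihilator of $\Pi$, and that annihilator equals $\mathcal{J}$ when $\Pi$ is minimal, one gets $\operatorname{DIM}(\Pi)=n(\mathfrak{g}_{\mathbb{C}})$.

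Applying this to the two minimal representations $\Pi_1$ and $\Pi_2$, both have Gelfand--Kirillov dimension $n(\mathfrak{g}_{\mathbb{C}})$, so Theorem~\ref{thm:tensormin} applies verbatim and yields $m(\Pi_1\otimes\Pi_2)<\infty$; that is the whole proof. It is worth remarking why one cannot instead quote Theorem~\ref{thm:minbdd} for the symmetric pair $(G\times G,\operatorname{diag}G)$ inside $G\times G$: the representation $\Pi_1\otimes\Pi_2$ is \emph{not} a minimal representation of $G\times G$, since its annihilator has associated variety $\overline{\mathbb{O}_{\operatorname{min},\mathbb{C}}}\times\overline{\mathbb{O}_{\operatorname{min},\mathbb{C}}}$ and hence $\operatorname{DIM}(\Pi_1\otimes\Pi_2)=2\,n(\mathfrak{g}_{\mathbb{C}})$, whereas a minimal nilpotent orbit of $\mathfrak{g}_{\mathbb{C}}\oplus\mathfrak{g}_{\mathbb{C}}$ is $\mathbb{O}_{\operatorname{min},\mathbb{C}}\times\{0\}$ (or its mirror image), of half that size. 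This mismatch is exactly why Theorem~\ref{thm:tensormin} must be established on its own rather than as a special case of the symmetric-pair statement.

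Consequently there is no real obstacle in the present deduction; all the work is front-loaded into Theorem~\ref{thm:tensormin}. If I were to anticipate where the difficulty in that theorem lies, it is in checking that the diagonal $G$-action on the product $\overline{\mathbb{O}_{\operatorname{min},\mathbb{C}}}\times\overline{\mathbb{O}_{\operatorname{min},\mathbb{C}}}$ — equivalently, the associated moment-map geometry — satisfies the coisotropy/boundedness criterion that forces a uniform multiplicity bound; this is the step at which the exceptional smallness of $\mathbb{O}_{\operatorname{min},\mathbb{C}}$ is genuinely exploited.
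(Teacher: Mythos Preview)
Your deduction is correct and matches the paper's own argument: the paper states immediately before Theorems~\ref{thm:minbdd} and~\ref{thm:tensor} that a minimal representation satisfies $\operatorname{DIM}(\Pi)=n(\mathfrak{g}_{\mathbb{C}})$ by the definition of the Joseph ideal, and then records both theorems as direct consequences of Theorems~\ref{thm:Joseph} and~\ref{thm:tensormin}. Your additional remark explaining why the group-manifold case $(G\times G,\operatorname{diag}G)$ does not reduce Theorem~\ref{thm:tensor} to Theorem~\ref{thm:minbdd} is a helpful clarification (and indeed $G\times G$ is not simple, so the Joseph ideal and minimal representation are not even defined there), but it is extraneous to the proof itself.
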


\begin{example}
The tensor product representation of the two copies
 of the Segal--Shale--Weil representations
 of the metaplectic group $M p(n,{\mathbb{R}})$
 is unitarily equivalent to the 
 phase space representation
 of $S p(n, {\mathbb{R}})$
 on $L^2({\mathbb{R}}^{2n})$
 via the Wigner transform, 
 see \cite[Sect.\ 2]{KOPU09} for instance.  
\end{example}

In general, 
 it is rare
 that the restriction $\Pi|_{G'}$ 
 of $\Pi \in {\mathcal{M}}(G)$
 is {\it{almost irreducible}}
 in the sense
 that the $G'$-module $\Pi|_{G'}$ remains irreducible
 or a direct sum of finitely many 
 irreducible representations of $G'$.  
In \cite[Sect.\ 5]{K11Zuckerman}, 
 we discussed such rare phenomena
 and gave a list
 of the triples $(G,G',\Pi)$
 where the restriction $\Pi|_{G'}$ is almost irreducible, 
 in particular, 
 in the following settings:
 $\Pi\in {\mathcal{M}}(G)$ is a degenerate principal series representation 
 or Zuckerman's derived functor module 
 $A_{\mathfrak{q}}(\lambda)$, 
 which is supposed to be
 a \lq\lq{geometric quantization}\rq\rq\
 of a hyperbolic coadjoint orbit
 or an elliptic coadjoint orbit, 
 respectively, 
 in the orbit philosophy, 
 see \cite[Thms.\ 3.8 and 3.5]{K11Zuckerman}.  
As a corollary of Theorem \ref{thm:minbdd}, 
 we also prove the following theorem 
 where $\Pi$ is \lq\lq{attached to}\rq\rq\
 the minimal nilpotent coadjoint orbit
 ${\mathbb{O}}_{\operatorname{min}, {\mathbb{C}}}$. 

\begin{theorem}
\label{thm:irr}
Suppose that $(G,G')$ is a symmetric pair
 such that the complexified Lie algebras 
 $({\mathfrak{g}}_{\mathbb{C}}, {\mathfrak{g}}_{\mathbb{C}}')$
 is in the list 
 of Proposition \ref{prop:OKO} (vi).  
Then the restriction $\Pi|_{G'}$ is almost irreducible
 if $\Pi$ is a minimal representation of $G$.  
\end{theorem}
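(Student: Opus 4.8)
The plan is to combine the bounded multiplicity statement of Theorem \ref{thm:minbdd} with a rigidity argument that converts an \emph{a priori} bound on the dimensions of the spaces of symmetry breaking operators into the assertion that only finitely many irreducible constituents occur in $\Pi|_{G'}$. First I would record that, since $\Pi$ is a minimal representation, $\operatorname{DIM}(\Pi)=n({\mathfrak g}_{\mathbb C})$, and so by Theorem \ref{thm:minbdd} the restriction $\Pi|_{G'}$ satisfies $m(\Pi|_{G'})<\infty$. The next step is to control the size of the irreducible pieces that can show up: the associated variety of the annihilator of $\Pi|_{G'}$ as a ${\mathfrak g}'_{\mathbb C}$-module is contained in the image of the closure of ${\mathbb O}_{\operatorname{min},{\mathbb C}}$ under the projection ${\mathfrak g}^*_{\mathbb C}\to ({\mathfrak g}'_{\mathbb C})^*$ (restriction of functionals), so every irreducible $G'$-subquotient $\pi$ of $\Pi|_{G'}$ has $\operatorname{DIM}(\pi)$ bounded above by $\tfrac12\dim$ of that image. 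For the pairs listed in Proposition \ref{prop:OKO} (vi), this projected variety is itself small — the point of restricting to that list — so that $\pi$ is forced to be \lq\lq small\rq\rq, typically a minimal or small unipotent representation of $G'$, and in particular $\pi$ must have a specific infinitesimal character dictated by the $Z({\mathfrak g}_{\mathbb C})$-action transferred through the symmetric-pair embedding.

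The heart of the argument is then a counting step. I would fix the infinitesimal character $\chi$ of $\Pi$ and use that the restriction $\Pi|_{{\mathfrak g}'_{\mathbb C}}$, being finitely generated over $U({\mathfrak g}'_{\mathbb C})$ with a fixed (generalized) $Z({\mathfrak g}'_{\mathbb C})$-infinitesimal character coming from the branching of $\chi$ under the Harish-Chandra homomorphism, has only finitely many possible irreducible constituents with the requisite small Gelfand--Kirillov dimension — this is where the list in Proposition \ref{prop:OKO} (vi) does real work, guaranteeing both that the GK dimension is pinned down and that there are finitely many candidate $\pi$'s (e.g.\ the handful of minimal representations of $G'$, by the classification recalled after Definition \ref{def:minrep}). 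Combining \lq\lq finitely many candidate constituents\rq\rq\ with \lq\lq each occurs with finite multiplicity\rq\rq\ (from $m(\Pi|_{G'})<\infty$) yields that $\Pi|_{G'}$ has finite length as a $G'$-module, hence is a finite direct sum of irreducibles after semisimplification; since a minimal representation is unitarizable in the cases at hand, the restriction is a genuine finite direct sum, i.e.\ almost irreducible in the sense defined above.

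The main obstacle I anticipate is the second step rather than the first: showing that the list of candidate constituents is genuinely \emph{finite}, not merely that each has small GK dimension. A bound on GK dimension alone does not bound the number of irreducibles with a fixed infinitesimal character unless one knows more — one needs the geometric input that the relevant nilpotent $G'_{\mathbb C}$-orbit(s) in the projected variety are rigid and carry only finitely many admissible data, or equivalently that the associated $\mathcal D$-modules / primitive ideals are finite in number. For the pairs in Proposition \ref{prop:OKO} (vi) this should follow from the explicit orbit analysis underlying that proposition (the projection lands in the closure of a single small nilpotent $G'_{\mathbb C}$-orbit, whose unipotent representations are classified), but making that reduction precise — and handling the possible non-unitary subquotients so that \lq\lq finite length\rq\rq\ upgrades to \lq\lq finite direct sum\rq\rq\ — is the delicate part, and is presumably where one invokes the case-by-case list rather than a uniform argument.
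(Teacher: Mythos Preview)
Your overall architecture---bounded multiplicity from Theorem \ref{thm:minbdd}, plus a finiteness argument for the set of possible irreducible constituents, plus unitarizability to pass from finite length to a finite direct sum---matches the paper's. But the middle step, the one you flag as the obstacle, is handled in the paper by a completely different and much sharper mechanism than your associated-variety projection argument.

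The paper does not attempt to bound the Gelfand--Kirillov dimension of constituents or to analyse the projection of $\overline{{\mathbb O}_{\operatorname{min},{\mathbb C}}}$ to $({\mathfrak g}'_{\mathbb C})^{\ast}$. Instead it invokes a purely algebraic fact about the Joseph ideal ${\mathcal J}$: for the pairs $({\mathfrak g}_{\mathbb C},{\mathfrak g}'_{\mathbb C})$ in Proposition \ref{prop:OKO} (vi) one has
\[
  \bigl(U({\mathfrak g}_{\mathbb C})/{\mathcal J}\bigr)^{{\mathfrak g}'_{\mathbb C}}={\mathbb C},
\]
quoted from \cite[Lem.\ 3.4]{Ta}. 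Since the action of $Z({\mathfrak g}'_{\mathbb C})$ on $\Pi$ factors through $Z({\mathfrak g}'_{\mathbb C})\to (U({\mathfrak g}_{\mathbb C})/{\mathcal J})^{{\mathfrak g}'_{\mathbb C}}\to\operatorname{End}_{\mathbb C}(\Pi)$, this forces $Z({\mathfrak g}'_{\mathbb C})$ to act by \emph{scalars} on all of $\Pi$. Hence every irreducible $G'$-constituent shares one and the same $Z({\mathfrak g}'_{\mathbb C})$-infinitesimal character, and the standard fact that $\operatorname{Irr}(G')$ contains only finitely many classes with a fixed infinitesimal character gives the finite list of candidates immediately---no orbit analysis, no GK-dimension bound, no case-by-case classification of small unipotent representations of $G'$.

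The gap you yourself identified is real in your route: a bound on GK dimension does not by itself pin down the infinitesimal character, and your phrase ``$Z({\mathfrak g}'_{\mathbb C})$-infinitesimal character coming from the branching of $\chi$ under the Harish-Chandra homomorphism'' has no content without an additional input---there is in general no map $Z({\mathfrak g}_{\mathbb C})\to Z({\mathfrak g}'_{\mathbb C})$ doing what you want. The paper's input is precisely the invariant-theoretic identity above, which is the special feature of the Joseph ideal for these particular symmetric pairs and is what makes the list in Proposition \ref{prop:OKO} (vi) relevant. With that in hand, the rest of your outline (unitarizability from \cite{Ta}, finite multiplicity from Theorem \ref{thm:minbdd}, hence a finite direct sum) goes through exactly as you wrote.
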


\begin{example}
For the following symmetric pairs 
 $({\mathfrak{g}}, {\mathfrak{g}}')$, 
 there exists a minimal representation $\Pi$
 of some Lie group $G$
 with Lie algebra ${\mathfrak{g}}$
 ({\it{e.g.,}} $G=Mp(n,{\mathbb{R}})$
 for ${\mathfrak{g}}= {\mathfrak{s p}}(n,{\mathbb{R}})$), 
 and Theorem \ref{thm:irr}
 applies to $(G,G',\Pi)$.  

\par\noindent
$\bullet$\enspace
$({\mathfrak{s p}}(p+q,{\mathbb{R}}), {\mathfrak{s p}}(p,{\mathbb{R}})\oplus{\mathfrak{s p}}(q,{\mathbb{R}}))$

\par\noindent
$\bullet$\enspace
$({\mathfrak{s o}}(p,q), {\mathfrak{s o}}(p-1,q))$
 or 
$({\mathfrak{s o}}(p,q),{\mathfrak{s o}}(p,q-1))$
 for \lq\lq{$p \ge q \ge 4$ and $p\equiv q \mod 2$}\rq\rq, 
 \lq\lq{$p \ge 5$ and $q=2$}\rq\rq,  
 or 
 \lq\lq{$p \ge 4$ and $q=3$}\rq\rq.  
\par\noindent
$\bullet$\enspace
$({\mathfrak{f}}_{4(4)}, {\mathfrak{s o}}(5,4))$, 
\par\noindent
$\bullet$\enspace
$({\mathfrak{e}}_{6(6)}, {\mathfrak{f}}_{4(4)})$, 
 or 
$({\mathfrak{e}}_{6(-14)},{\mathfrak{f}}_{4(-20)})$.  

\end{example}

We note 
 that the upper bound $m(\Pi|_{G'})$ or $m(\Pi_1 \otimes \Pi_2)$ 
 of the multiplicity
 can be larger than 1 in Theorems \ref{thm:Joseph} and \ref{thm:tensormin}, 
 see {\it{e.g.}} \cite{KOP}
 for an explicit branching law of the restriction $\Pi|_{G'}$
 when $(G,G')=(SL(n,{\mathbb{R}}), SO(p,q))$ with $p+q=n$.  
However, 
 it is plausible
 that a multiplicity-free theorem holds 
 in Theorems \ref{thm:minbdd} and \ref{thm:tensor}:
\begin{conjecture}
\label{conj:multone}
$m(\Pi|_{G'})=1$
 in Theorem \ref{thm:minbdd}, 
 and $m(\Pi_1 \otimes \Pi_2) =1$ in Theorem \ref{thm:tensor}.  
\end{conjecture}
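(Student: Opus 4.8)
The plan is to prove the multiplicity-one assertion by sharpening the already-established finiteness $m(\Pi|_{G'})<\infty$ of Theorem \ref{thm:minbdd} through the orbit method, reducing the representation-theoretic claim to a symplectic \emph{multiplicity-free} (coisotropic) property of the minimal coadjoint orbit under the symmetric subgroup. Since $\Pi$ is minimal, its associated variety is the closure of ${\mathbb{O}}_{\operatorname{min}, {\mathbb{C}}}$, so $\Pi$ should be regarded as a geometric quantization of a real form of ${\mathbb{O}}_{\operatorname{min}, {\mathbb{C}}}$. The moment map for the $G'_{\mathbb{C}}$-action on ${\mathbb{O}}_{\operatorname{min}, {\mathbb{C}}}$ is the restriction $\mu\colon {\mathbb{O}}_{\operatorname{min}, {\mathbb{C}}}\hookrightarrow {\mathfrak{g}}_{\mathbb{C}}^{\ast}\twoheadrightarrow ({\mathfrak{g}}_{\mathbb{C}}')^{\ast}$, and for $x\in{\mathbb{O}}_{\operatorname{min}, {\mathbb{C}}}$ the Kirillov--Kostant--Souriau form identifies the symplectic orthogonal of the tangent space to the $G'_{\mathbb{C}}$-orbit with $\ker(d\mu_x)$. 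Thus the target geometric condition is that generic $G'_{\mathbb{C}}$-orbits be coisotropic, i.e. $\ker(d\mu_x)\subset T_x(G'_{\mathbb{C}}\cdot x)$, equivalently that the Poisson algebra ${\mathbb{C}}[{\mathbb{O}}_{\operatorname{min}, {\mathbb{C}}}]^{G'_{\mathbb{C}}}$ be commutative. For the tensor-product statement of Theorem \ref{thm:tensor}, I would first observe that $\Pi_1\otimes\Pi_2$ is the restriction of $\Pi_1\boxtimes\Pi_2$ to the diagonal, so the group-manifold pair $(G\times G,\operatorname{diag}G)$ reduces it to the same framework, now with the diagonal $G_{\mathbb{C}}$-action on ${\mathbb{O}}_{\operatorname{min}, {\mathbb{C}}}\times{\mathbb{O}}_{\operatorname{min}, {\mathbb{C}}}$.

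The second step is to verify this coisotropy for all symmetric pairs. Here I would exploit the rigid structure of the minimal orbit: ${\mathbb{O}}_{\operatorname{min}, {\mathbb{C}}}$ is the orbit of a highest-root vector, it carries a natural contact/${\mathbb{C}}^{\times}$-bundle structure over the projectivized orbit, and its dimension $2n({\mathfrak{g}}_{\mathbb{C}})$ is read off from the table. The coisotropy test at a generic point reduces to a comparison of the orbit dimension $\dim(G'_{\mathbb{C}}\cdot x)$ with the rank of $d\mu_x$, equivalently to the condition that the generic symplectic reduction of ${\mathbb{O}}_{\operatorname{min}, {\mathbb{C}}}$ by $G'_{\mathbb{C}}$ be a point; these quantities are computable from the restricted-root data of the involution $\sigma$. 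Because the list of symmetric pairs is finite (the Kobayashi--Matsuki classification), I would organize the verification along that classification, and I expect the smallness of ${\mathbb{O}}_{\operatorname{min}, {\mathbb{C}}}$ to make the coisotropic condition hold in every case; the sphericity condition behind Theorem \ref{thm:Joseph} is a close cousin of this coisotropy condition and provides the starting point.

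The genuinely hard part is bridging from the symplectic multiplicity-free property back to the analytic statement $\dim_{\mathbb{C}}\invHom{G'}{\Pi|_{G'}}{\pi}\le 1$: a \lq\lq{quantization commutes with restriction}\rq\rq\ principle is not available for singular orbits and minimal representations, so coisotropy only heuristically predicts the bound. To make it rigorous I would pass to explicit models of $\Pi$ --- the $L^2$-model on the real minimal orbit, or the Schr\"odinger-type/Weil realization --- and realize every symmetry breaking operator as a $G'$-invariant distribution kernel on a real spherical homogeneous space attached to the pair. Multiplicity one then amounts to showing that this space has a \emph{single} open $G'$-orbit and that the isotropy representation acts trivially on the relevant line, so that the invariant distribution is unique up to scalar; the finiteness from Theorem \ref{thm:minbdd} guarantees that only finitely many orbits contribute, which makes this orbit-counting feasible. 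In the classical dual-pair cases I would instead deduce multiplicity one directly from Howe duality or the theta correspondence --- for instance the phase-space example, where $\Pi_1\otimes\Pi_2\cong L^2({\mathbb{R}}^{2n})$ decomposes multiplicity-freely under $Sp(n,{\mathbb{R}})$. I expect the exceptional pairs and the orbits with disconnected generic isotropy to be the main obstacle, since there the triviality of the isotropy character on the quantizing line, rather than the coisotropy itself, is what can fail and must be checked by hand.
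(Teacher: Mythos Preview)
The statement you are attempting to prove is labelled a \emph{Conjecture} in the paper and is not proved there; the only evidence offered is the Riemannian case $(G,K)$, citing \cite[Prop.\ 4.10]{GS05}. So there is no ``paper's own proof'' to compare against.

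More importantly, your steps 1 and 2 do not go beyond what the paper already establishes for the weaker bounded-multiplicity Theorems \ref{thm:minbdd} and \ref{thm:tensor}. The coisotropy of the $K_{\mathbb{C}}$-action on ${\mathbb{O}}_{\operatorname{min},{\mathbb{C}}}$ and of the diagonal $G_{\mathbb{C}}$-action on ${\mathbb{O}}_{\operatorname{min},{\mathbb{C}}}\times{\mathbb{O}}_{\operatorname{min},{\mathbb{C}}}$ is exactly Theorem \ref{thm:22030921}, proved in Section \ref{sec:coiso} via an explicit slice argument (Lemmas \ref{lem:1.7}, \ref{lem:Hcoiso}, \ref{lem:KAN}), not by a case-by-case run through the Kobayashi--Matsuki list as you propose. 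The paper then invokes Kitagawa's result (Fact \ref{fact:Ki}) to conclude $m(\Pi|_{G'})<\infty$. Coisotropy is the symplectic shadow of multiplicity-freeness, but Fact \ref{fact:Ki} only converts it into a \emph{finite} bound, not the bound $1$; that is the entire gap between Theorems \ref{thm:minbdd}, \ref{thm:tensor} and Conjecture \ref{conj:multone}.

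Your step 3 is where the actual content of the conjecture lives, and what you have written there is a wish list rather than an argument. Passing to an $L^2$- or Schr\"odinger-type model and counting open $G'$-orbits on the underlying homogeneous space does \emph{not} in general force $\dim\invHom{G'}{\Pi|_{G'}}{\pi}\le 1$: even with a single open orbit, boundary contributions and the failure of automatic continuity for invariant distributions can produce extra symmetry breaking operators, and triviality of the isotropy character is a separate obstruction you yourself flag but do not resolve. The Howe-duality shortcut handles only a handful of classical pairs. In short, the proposal re-derives the coisotropy input the paper already has and then defers precisely the step that makes Conjecture \ref{conj:multone} open.
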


Conjecture \ref{conj:multone} holds 
 when $(G,G')$ is a Riemannian symmetric pair $(G,K)$, 
 see \cite[Prop.\ 4.10]{GS05}.

\begin{remark}
(1)\enspace
The Joseph ideal is not defined 
for ${\mathfrak{sl}}(n,{\mathbb{C}})$, 
 hence there is no minimal representation
 in the sense of Definition \ref{def:minrep}
 for $G=SL(n,{\mathbb{R}})$, 
 for instance.  
However there exist continuously many $\Pi \in \operatorname{Irr}(G)$ 
({\it{e.g.}}, degenerate principal series representations
 induced from a mirabolic subgroup)
 for $G=SL(n,{\mathbb{R}})$
 such that $\operatorname{DIM}(\Pi)=n({\mathfrak{g}}_{\mathbb{C}})$, 
 and Theorems \ref{thm:Joseph} and \ref{thm:tensormin} apply
 to these representations.  
The Plancherel-type theorem for the restriction $\Pi|_{G'}$
 is proved in \cite{KOP} 
 for all symmetric pairs $(G,G')$
 when $\Pi$ is a {\it{unitarily}} induced representation.  
See also Example \ref{ex:BC} below.  
\par\noindent
(2)\enspace
The inequality \eqref{eqn:GKbdd} depends only 
 on the complexification ${\mathfrak{g}}_{\mathbb{C}}$, 
 and is not necessarily optimal 
 for specific real forms ${\mathfrak{g}}$.  
In fact, 
 one has a better inequality 
 $n({\mathfrak{g}}) \le \operatorname{DIM}(\Pi)$
 where $n({\mathfrak{g}})$ depends on the real form ${\mathfrak{g}}$, 
 see Section \ref{subsec:realmin}.  
For most of real Lie algebras
 one has $n({\mathfrak{g}})=n({\mathfrak{g}}_{\mathbb{C}})$, 
 but there are a few simple Lie algebras ${\mathfrak{g}}$
satisfying $n({\mathfrak{g}})>n({\mathfrak{g}}_{\mathbb{C}})$.  
For example, 
 if $G=Sp(p,q)$, 
 $n({\mathfrak{g}})=2(p+q)-1 > n({\mathfrak{g}}_{\mathbb{C}}) = p+q$, 
 hence there is no $\Pi \in \operatorname{Irr}(G)$
 with $\operatorname{DIM}(\Pi)=n({\mathfrak{g}}_{\mathbb{C}})$, 
 however, 
 there exists a countable family 
 of $\Pi \in \operatorname{Irr}(G)$
 with $\operatorname{DIM}(\Pi)=n({\mathfrak{g}})$, 
 to which another bounded multiplicity theorem 
(Theorem \ref{thm:Rminbdd} in Section \ref{sec:coiso})
 applies.  
\par\noindent
(3)\enspace
Concerning Theorem \ref{thm:Joseph}, 
 the bounded property
 of the multiplicity
 in the tensor product representations $\Pi_1 \otimes \Pi_2$ still holds
 for some other \lq\lq{small representations}\rq\rq\
 $\Pi_1$ and $\Pi_2$
 whose Gelfand--Kirillov dimensions are greater
 than $n({\mathfrak{g}}_{\mathbb{C}})$.  
See \cite[Thm.\ 1.5 and Cor.\ 4.10]{K22}
 for example.  
\end{remark}

This paper is organized 
 as follows.  
In Section \ref{sec:2}
 we give a brief review 
 of some background of the problem, 
 examples, 
 and known theorems.  
Section \ref{sec:coiso} is devoted to the proof
 of Theorems \ref{thm:Joseph}, \ref{thm:tensormin} and \ref{thm:irr}.

\vskip 1pc
\par\noindent
{\bf{$\langle$Acknowledgements$\rangle$}}\enspace
The author warmly thanks Professor Vladimir Dobrev for his
hospitality during the 14th International Workshop: Lie Theory and its 
Applications in Physics, held online in Bulgaria, 20--26 June 2021.
This work was partially supported
 by Grant-in-Aid for Scientific Research (A) (18H03669), 
JSPS.

%%%%%%%%%%%%%%%%%%%%%%%%%%%%%%%%%%%%%
\section{Background and motivation}
\label{sec:2}
%%%%%%%%%%%%%%%%%%%%%%%%%%%%%%%%%%%%%

In this section, 
 we explain some background, 
 examples, 
 and known theorems 
 in relation to our main results.

If $\Pi$ is an irreducible {\it{unitary}} representation
 of a group $G$, 
 then one may consider the irreducible decomposition 
 ({\it{branching law}})
 of the restriction $\Pi|_{G'}$
 to a subgroup $G'$
 by using the direct integral of Hilbert spaces.  
For non-unitary representations $\Pi$, 
 such an irreducible decomposition does not make sense, 
 but the computation
 of the multiplicity $[\Pi|_{G'}:\pi]$
 for all $\pi \in {\operatorname{Irr}}(G')$
 may be
 thought of as a variant of branching laws.  
Here we recall from Section \ref{sec:Intro}
 that for $\Pi \in {\mathcal{M}}(G)$
 and $\pi \in {\operatorname{Irr}}(G')$
 that the multiplicity 
 $[\Pi|_{G'}:\pi]$ is the dimension
 of the space $\invHom {G'}{\Pi|_{G'}}{\pi}$
 of symmetry breaking operators.

By branching problems in representation theory, 
 we mean the broad problem of understanding
 how irreducible (not necessarily, unitary) representations
 of a group behave when restricted to a subgroup.  
As viewed in \cite{xKVogan2015}, 
 we may divide the branching problems
 into the following three stages:
\par\noindent
{\bf{Stage A.}}\enspace
Abstract features of the restriction;
\par\noindent
{\bf{Stage B.}}\enspace
Branching law;
\par\noindent
{\bf{Stage C.}}\enspace
Construction of symmetry breaking operators.  

The role of Stage A is to develop 
 a theory on the restriction of representations
 as generally
 as possible.  
In turn, 
 we may expect a detailed study of the restriction
 in Stages B (decomposition of representations)
 and C (decomposition of vectors)
 in the \lq\lq{promising}\rq\rq\ settings 
 that are suggested by the general theory 
 in Stage A.

The study of the upper estimate of the multiplicity in this article
 is considered as a question 
 in Stage A
 of branching problems.

For a detailed analysis on the restriction $\Pi|_{G'}$
 in Stages B and C, 
 it is desirable to have
 the bounded multiplicity property
 $m(\Pi|_{G'})<\infty$
 (see Definition \ref{def:bdd}), 
 or at least 
 to have the finite multiplicity property
\begin{equation}
\label{eqn:fweak}
     [\Pi|_{G'}: \pi]<\infty
     \quad
     \text{for $\pi \in \operatorname{Irr}(G')$.}
\end{equation}
In the previous papers \cite{xkAnn98, K14, K21Kostant, K22, K22PJA, xktoshima}
 we proved some general theorems 
 for bounded/finite multiplicities
 of the restriction $\Pi|_{G'}$, 
 which we review briefly now.

%%%%%%%%%%%%%%%%%%%%%%%%%%%%%%%%%%%%%%%%%%%%%%%%%%%%%%%%
\subsection{Bounded multiplicity pairs $(G,K')$ with $K'$ compact}
%\label{subsec:bdd}
%%%%%%%%%%%%%%%%%%%%%%%%%%%%%%%%%%%%%%%%%%%%%%%%%%%%%%%

Harish-Chandra's admissibility theorem tells
 the finiteness property \eqref{eqn:fweak} holds
 for any $\Pi \in {\mathcal{M}}(G)$
 if $G'$ is a maximal compact subgroup $K$ of $G$.  
More generally, 
 the finiteness property \eqref{eqn:fweak} for a compact subgroup plays
 a crucial role
 in the study of discretely decomposable restriction
 with respect to reductive subgroups
 \cite{xkInvent94, xkAnn98, xkInvent98, K21Kostant}.  
We review briefly the necessary and sufficient condition 
 for \eqref{eqn:fweak}
 when $G'$ is compact.  
In this subsection, 
 we use the letter $K'$
 instead of $G'$
 to emphasize that $G'$ is compact.  
Without loss of generality, 
 we may and do assume
 that $K'$ is contained in $K$.

\begin{fact}
[{\cite{xkAnn98, K21Kostant}}]
\label{fact:Kadm}
Suppose that $K'$ is a compact subgroup
 of a real reductive group $G$.  
Let $\Pi \in {\mathcal{M}}(G)$.  
Then the following two conditions on the triple $(G, K', \Pi)$
 are equivalent:
\par\noindent
(i)\enspace
The finite multiplicity property \eqref{eqn:fweak} holds.  
\par\noindent
(ii)\enspace
$\operatorname{AS}_K(\Pi) \cap C_K(K') =\{0\}$.  
\end{fact}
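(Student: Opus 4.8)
The plan is to transfer the question from $K'$ to the $K$-type combinatorics of $\Pi$, and then to read off finiteness of the multiplicities from the disjointness of two closed cones in a Weyl chamber. Fix a maximal torus $T\subset K$, a positive system, and the closed dominant chamber $\sqrt{-1}\,\mathfrak{t}^{*}_{+}$, in which $\widehat K$ is parametrized by highest weights; write $V_{\mu}$ for the irreducible $K$-module of highest weight $\mu$ and $\operatorname{Supp}_K(\Pi):=\{\mu\in\widehat K:[\Pi|_K:\mu]\neq 0\}$. Since $K'$ is compact, $\Pi$ is $K$-admissible by Harish-Chandra, so $[\Pi|_K:\mu]<\infty$ for every $\mu$, and Frobenius reciprocity gives $[\Pi|_{K'}:\tau]=\sum_{\mu}[\Pi|_K:\mu]\,[V_{\mu}|_{K'}:\tau]$, a sum of finite terms. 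Hence condition (i) is equivalent to: $\operatorname{Supp}_K(\Pi)\cap S_{\tau}$ is finite for every $\tau\in\widehat{K'}$, where $S_{\tau}:=\{\mu\in\widehat K:[V_{\mu}|_{K'}:\tau]\neq 0\}$.

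Next I would assemble the geometry of the two relevant cones. By the Kashiwara--Vergne theory, $\operatorname{AS}_K(\Pi)$ is precisely the closed cone generated by the limit directions of $\operatorname{Supp}_K(\Pi)$; moreover, finite generation of $\Pi$ over $U(\mathfrak{g}_{\mathbb{C}})$ confines $\operatorname{Supp}_K(\Pi)$ to a bounded neighborhood of this cone, and its $K$-multiplicities are positive on a cofinal subcone. On the other side, writing $\mathfrak{k}=\mathfrak{k}'\oplus\mathfrak{m}$ for the orthogonal decomposition with respect to an invariant form and identifying $\mathfrak{m}^{*}$ with $(\mathfrak{k}')^{\perp}\subset\mathfrak{k}^{*}$, one has $C_K(K')=\sqrt{-1}\,\mathfrak{t}^{*}_{+}\cap\operatorname{Ad}^{*}(K)\bigl(\sqrt{-1}\,\mathfrak{m}^{*}\bigr)$, the chamber part of the image of the moment map $T^{*}(K/K')\to\sqrt{-1}\,\mathfrak{k}^{*}$; it is a convex polyhedral cone. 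The branching input I would invoke is the classical Heckman-type asymptotic multiplicity estimate: for each fixed $\tau$, the set $S_{\tau}$ stays within bounded distance of $C_K(K')$ and has $C_K(K')$ as its asymptotic cone; equivalently, the branching monoid $\mathcal{B}:=\{(\mu,\tau)\in\widehat K\times\widehat{K'}:[V_{\mu}|_{K'}:\tau]\neq 0\}$ is finitely generated, i.e.\ is the set of lattice points of a rational polyhedral cone (a standard consequence of the reductivity of $K'_{\mathbb{C}}$).

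The implication (ii)$\Rightarrow$(i) is then a short compactness argument: were $\operatorname{Supp}_K(\Pi)\cap S_{\tau}$ infinite for some $\tau$, one could pick $\mu_{n}$ in it with $|\mu_{n}|\to\infty$ (a ball contains only finitely many dominant weights), pass to a subsequence so that $\mu_{n}/|\mu_{n}|$ converges to a unit vector $\omega$, and observe that $\omega$ then lies both in $\operatorname{AS}_K(\Pi)$ and in the asymptotic cone of $S_{\tau}$, hence in $C_K(K')$ --- contradicting (ii).

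The converse (i)$\Rightarrow$(ii) is the substantive direction, and I expect it to be the main obstacle. Arguing contrapositively, suppose $0\neq\nu\in\operatorname{AS}_K(\Pi)\cap C_K(K')$; I must exhibit one $\tau$ for which $\operatorname{Supp}_K(\Pi)\cap S_{\tau}$ is infinite. Two things have to be done. First, since $\nu\in\operatorname{AS}_K(\Pi)$ there are $\mu_{n}\in\operatorname{Supp}_K(\Pi)$ with $|\mu_{n}|\to\infty$ and $\mu_{n}/|\mu_{n}|\to\nu/|\nu|$, and, using the quasi-polynomial behaviour and the cofinal positivity of the $K$-multiplicities, one arranges that the $\mu_{n}$ stay within a bounded distance of the ray $\mathbb{R}_{\ge 0}\nu\subset C_K(K')$ rather than merely having direction tending to it. Second --- and this is the real work --- one must convert ``$\mu_{n}$ remains near $C_K(K')$'' into ``a common, bounded $K'$-type occurs in every $V_{\mu_{n}}|_{K'}$''. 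Here soft cone geometry does not suffice: a sequence whose directions enter $C_K(K')$ need not stay a bounded distance from it, and a priori the smallest $K'$-type of $V_{\mu_{n}}$ could blow up. So the rational-polyhedral structure of $\mathcal{B}$ is indispensable --- expanding $(\mu_{n},\tau_{n})\in\mathcal{B}$ in a fixed finite generating set and using that the monoid $\widehat{K'}$ of dominant weights spans a salient (pointed) cone, the generators carrying a nonzero $K'$-component can contribute only a bounded amount to $\mu_{n}$ once $\mu_{n}$ is pinned near $C_K(K')$, whence $\tau_{n}$ ranges over a finite set and some value recurs infinitely often, giving the required $\tau$. (When $K'$ has a positive-dimensional center the cone $\widehat{K'}$ is not pointed; that degenerate case, in which $C_K(K')$ is correspondingly smaller, is treated separately along the same lines, as in \cite{xkAnn98}.) Everything outside this last step is bookkeeping around Kashiwara--Vergne and the moment map; it is the arithmetic of the branching semigroup that I expect to be the crux.
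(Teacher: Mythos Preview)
The paper does not give its own proof of this statement: it is recorded as a \emph{Fact} with references, followed by one sentence of commentary --- ``There are two proofs for the implication (ii) $\Rightarrow$ (i): by using the singularity spectrum (or the wave front set) \cite{xkAnn98} and by using symplectic geometry \cite{K21Kostant}. The proof for the implication (i) $\Rightarrow$ (ii) is given in \cite{K21Kostant}.'' So there is no in-paper argument to compare against, only these pointers to method.

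Against that description: your (ii)$\Rightarrow$(i) is the asymptotic-cone shadow of the microlocal argument in \cite{xkAnn98}, and it is fine as written. Your route to (i)$\Rightarrow$(ii), however, is genuinely different from what the paper advertises (symplectic geometry in \cite{K21Kostant}), and as sketched it has a gap. The fragile step is the sentence ``using the quasi-polynomial behaviour and the cofinal positivity of the $K$-multiplicities, one arranges that the $\mu_n$ stay within a bounded distance of the ray $\mathbb{R}_{\ge 0}\nu$.'' The definition of $\operatorname{AS}_K(\Pi)$ gives only $\mu_n/|\mu_n|\to \nu/|\nu|$; upgrading this to a uniform tubular bound about the ray is not a formality, and for a general $\Pi\in\mathcal{M}(G)$ there is no off-the-shelf ``cofinal positivity'' theorem that produces $K$-types of $\Pi$ marching down an arbitrary ray of $\operatorname{AS}_K(\Pi)$ at bounded transverse distance. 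If $\nu$ lies on the boundary of $C_K(K')$, your $\mu_n$ could drift so that $\operatorname{dist}(\mu_n, C_K(K'))\to\infty$ even while $\mu_n/|\mu_n|\to\nu/|\nu|$, and then the minimal $K'$-type of $V_{\mu_n}$ escapes to infinity, so the branching-semigroup pigeonhole never engages. Your polyhedral input on the $K'$ side (finite generation of $\mathcal{B}$, which is indeed a Grosshans/Had\v{z}iev-type fact) is correct and would finish the job \emph{once} the bounded-distance claim is secured --- but that claim is exactly where the symplectic-geometric machinery of \cite{K21Kostant} (relating $\operatorname{AS}_K(\Pi)$ to moment images via the associated variety, rather than to the raw lattice $\operatorname{Supp}_K(\Pi)$) does the heavy lifting. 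Either import that input explicitly, or explain which structural theorem on $K$-spectra of Harish-Chandra modules you are invoking for the tubular bound.
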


Here $\operatorname{AS}_K(\Pi)$ is the asymptotic $K$-support
 of $\Pi$, 
 and $C_K(K')$ is the momentum set
 for the natural action 
 on the cotangent bundle $T^{\ast}(K/K')$.  
There are two proofs
 for the implication (ii) $\Rightarrow$ (i):
 by using the singularity spectrum
 (or the wave front set)
 \cite{xkAnn98}
 and by using symplectic geometry \cite{K21Kostant}.  
The proof for the implication (i) $\Rightarrow$ (ii) is given
 in \cite{K21Kostant}.  
See \cite{KO15} for some classification theory.  
%%%%%%%%%%%%%%%%%%%%%%%%%%%%%%%%%%%%%%%%%%%%%%%%%%%%%%%%
\subsection{Bounded/finite multiplicity pairs $(G,G')$}
\label{subsec:bdd}
%%%%%%%%%%%%%%%%%%%%%%%%%%%%%%%%%%%%%%%%%%%%%%%%%%%%%%%

We now consider the general case
 where $G'$ is not necessarily compact.  
In \cite{K14} and \cite[Thms.~C and D]{xktoshima}
 we proved the following geometric criteria
 that concern {\it{all}} $\Pi \in {\operatorname{Irr}}(G)$
 and {\it{all}} $\pi \in {\operatorname{Irr}}(G')$:
\begin{fact}
\label{fact:KO}
Let $G \supset G'$ be a pair of real reductive algebraic Lie groups.  
\par\noindent
(1)\enspace
{\bf{Bounded multiplicity}} for a pair $(G,G')$: \enspace
\begin{equation}
\label{eqn:BB}
  \underset{\Pi \in \operatorname{Irr}(G)}\sup\,\,
  \underset{\pi \in \operatorname{Irr}(G')}\sup\,\,
  [\Pi|_{G'}:\pi]<\infty
\end{equation}
if and only if $(G_{\mathbb{C}} \times G_{\mathbb{C}}')/\operatorname{diag} G_{\mathbb{C}}'$ is spherical.   
\par\noindent
(2)\enspace
{\bf{Finite multiplicity}} for a pair $(G,G')$:\enspace
\[
  [\Pi|_{G'}:\pi]<\infty, 
\quad
 {}^{\forall}\Pi \in \operatorname{Irr}(G), 
 {}^{\forall}\pi \in \operatorname{Irr}(G')
\]
if and only if $(G \times G')/\operatorname{diag}G'$ is real spherical.  
\end{fact}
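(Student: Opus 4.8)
To prove Fact~\ref{fact:KO} I would bound $[\Pi|_{G'}:\pi]=\dim\invHom{G'}{\Pi|_{G'}}{\pi}$ by the dimension of a space of invariant distribution kernels on a product of two flag varieties, and then control that dimension by the orbit geometry of $(G\times G')/\operatorname{diag}G'$. First I would use Casselman's subrepresentation and quotient theorems, in the Casselman--Wallach smooth category, to realize every $\Pi\in\operatorname{Irr}(G)$ as a quotient of a minimal principal series $C^\infty(G/P,\mathcal{L}_\lambda)$ and every $\pi\in\operatorname{Irr}(G')$ as a subrepresentation of $C^\infty(G'/P',\mathcal{L}'_{\lambda'})$, with $P,P'$ minimal parabolic subgroups. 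Combining functoriality of $\operatorname{Hom}$ with the Schwartz kernel theorem on the compact manifolds $G/P$ and $G'/P'$ then yields an injection
\[
\invHom{G'}{\Pi|_{G'}}{\pi}\ \hookrightarrow\ \invHom{G'}{C^\infty(G/P,\mathcal{L}_\lambda)|_{G'}}{C^{-\infty}(G'/P',\mathcal{L}'_{\lambda'})}\ \cong\ \mathcal{D}'\!\bigl(X,\mathcal{W}_{\lambda,\lambda'}\bigr)^{\operatorname{diag}G'},
\]
where $X=(G/P)\times(G'/P')$ and $\mathcal{W}_{\lambda,\lambda'}$ is the corresponding $(G\times G')$-equivariant bundle. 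The $\operatorname{diag}G'$-orbits on $X$ are in bijection with the double cosets $P'\backslash G/P$; by the orbit-finiteness criteria for real spherical spaces (and, over $\mathbb{C}$, the Vinberg--Kimelfeld and Brion theorems) this set is finite exactly when $(G\times G')/\operatorname{diag}G'$ is real spherical, while $(G_{\mathbb{C}}\times G'_{\mathbb{C}})/\operatorname{diag}G'_{\mathbb{C}}$ is spherical exactly when $B_{G_{\mathbb{C}}}\times B_{G'_{\mathbb{C}}}$ has only finitely many orbits on $G_{\mathbb{C}}$.

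For part (2), assume $(G\times G')/\operatorname{diag}G'$ is real spherical, so $X$ carries finitely many $\operatorname{diag}G'$-orbits. A $\operatorname{diag}G'$-invariant distribution kernel is then a solution of a system of differential equations on $X$: the invariance equations for $\operatorname{diag}\mathfrak{g}'$, together with the radial equations produced by $\mathcal{Z}(\mathfrak{g})$ acting through the $G/P$-factor and $\mathcal{Z}(\mathfrak{g}')$ through the $G'/P'$-factor, which act by the scalars fixed by $\lambda$ and $\lambda'$. Since the orbits are finite in number, the characteristic variety of this system lies in the union of the conormal bundles $T^\ast_{O}X$ of the orbit strata, hence is Lagrangian; the system is therefore regular holonomic, and by Kashiwara's theorem its space of distribution solutions is finite-dimensional. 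This proves $[\Pi|_{G'}:\pi]<\infty$ for all $(\Pi,\pi)$, the ``if'' direction of (2). For the converse, if the pair is not real spherical then $P'\backslash G/P$ contains a connected family of positive dimension; choosing $\lambda,\lambda'$ generic, so that the principal series are irreducible and a cohomological obstruction vanishes, one builds from distribution kernels supported along a one-parameter subfamily of orbits infinitely many linearly independent symmetry breaking operators between honest $\Pi\in\operatorname{Irr}(G)$ and $\pi\in\operatorname{Irr}(G')$, so $[\Pi|_{G'}:\pi]=\infty$.

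For part (1) the bound must be uniform over all $\Pi$ and $\pi$, i.e.\ over all $(\lambda,\lambda')$, and the argument above does not give this because the dimension of the holonomic solution space can jump at singular parameters. Here I would complexify: $\dim\mathcal{D}'(X,\mathcal{W}_{\lambda,\lambda'})^{\operatorname{diag}G'}$ is bounded by the total multiplicity of the characteristic cycle of the corresponding regular holonomic $\mathcal{D}$-module on the complex double flag variety $(G_{\mathbb{C}}/P_{\mathbb{C}})\times(G'_{\mathbb{C}}/P'_{\mathbb{C}})$, and that cycle is supported on conormals to $\operatorname{diag}G'_{\mathbb{C}}$-orbit closures. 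When $(G_{\mathbb{C}}\times G'_{\mathbb{C}})/\operatorname{diag}G'_{\mathbb{C}}$ is spherical, so that $B_{G_{\mathbb{C}}}\times B_{G'_{\mathbb{C}}}$ has finitely many orbits on $G_{\mathbb{C}}$, one shows that the irreducible components of this cycle are drawn from a fixed finite list of orbit conormals and that each occurs with multiplicity at most a constant depending only on $(G,G')$; summing gives a bound on $[\Pi|_{G'}:\pi]$ independent of $(\lambda,\lambda')$, the ``if'' direction of (1). For the converse, if complex sphericity fails then either real sphericity also fails, in which case part (2) already produces an infinite multiplicity, or there are infinitely many Borel double cosets and, letting the inducing parameters degenerate to a singular position where ever more Bruhat cells collide, one produces a sequence $(\Pi_n,\pi_n)$ with $[\Pi_n|_{G'}:\pi_n]\to\infty$.

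The step I expect to be the main obstacle is the uniform length bound in part (1): one must show that ``finitely many Borel orbits'' forces the multiplicities along the characteristic cycle to stay bounded as the twist $(\lambda,\lambda')$ ranges over all---in particular all singular---values, not merely the generic ones where the count equals the number of orbit strata. This is exactly where the passage from the real orbit picture, which only yields finiteness, to the complex Borel-orbit picture is indispensable, and where the quantitative content of the equivalence ``spherical $\Longleftrightarrow$ bounded multiplicity'' lives. A secondary technical point is the two converse constructions, where one must choose the inducing data generic enough that the distributions attached to a positive-dimensional family of orbits actually survive in $\operatorname{Hom}$ and can be realized inside genuine irreducible representations.
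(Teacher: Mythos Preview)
The paper does not give its own proof of Fact~\ref{fact:KO}; it cites \cite{K14,xktoshima} and records only the methods: hyperfunction boundary maps (or, equivalently, holonomic $\mathcal{D}$-modules as later in \cite{K22,Tu}) for the sufficiency of the geometric condition, and a generalized Poisson transform for its necessity. Your ``if'' direction---embed into principal series, pass to invariant distribution kernels on the double flag, and invoke holonomicity for~(2) or a uniform characteristic-cycle bound for~(1)---is precisely the $\mathcal{D}$-module route of \cite{K22,Tu}, so on that half you are aligned with (one version of) the cited proof, and your identification of the uniform length bound as the crux of~(1) is accurate.

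Your ``only if'' direction, however, departs from the cited argument and carries a genuine gap. For~(2) you want, for a single generic $(\lambda,\lambda')$, an invariant distribution supported on each member of a one-parameter family of $\operatorname{diag}G'$-orbits. But existence of such a distribution on an orbit $O$ is a Frobenius-reciprocity constraint matching the isotropy representation of $O$ against the twist; as $O$ moves in the family this constraint moves with it, and you have not shown it can be met on infinitely many orbits \emph{simultaneously} with irreducibility of both principal series. Phrases like ``a cohomological obstruction vanishes'' do not address this. The proof in \cite{K14} sidesteps the issue by a different device: a generalized Poisson transform identifies symmetry breaking operators with a space of Shintani functions (joint eigenfunctions on $G$ for bi-invariant differential operators coming from $\mathcal{Z}(\mathfrak{g})$ and $\mathcal{Z}(\mathfrak{g}')$), and one shows that space is infinite-dimensional whenever $P\times P'$ has no open orbit on $G$. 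For~(1) your second branch, ``degenerate the parameters so ever more Bruhat cells collide,'' is not an argument---no construction of irreducible $\Pi_n,\pi_n$ with growing multiplicity is actually given---and again the cited proof proceeds through the Poisson-transform/Shintani-function picture rather than through such a degeneration.
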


Here we recall 
 that a complex $G_{\mathbb{C}}$-manifold $X$
 is called {\it{spherical}}
 if a Borel subgroup of $G_{\mathbb{C}}$
 has an open orbit in $X$, 
 and that a $G$-manifold $Y$ is called
 {\it{real spherical}}
 if a minimal parabolic subgroup of $G$
 has an open orbit in $Y$.

A remarkable discovery in Fact \ref{fact:KO} (1) was 
 that the bounded multiplicity property \eqref{eqn:BB} is determined
 only by the complexified Lie algebras
 ${\mathfrak{g}}_{\mathbb{C}}$ and ${\mathfrak{g}}_{\mathbb{C}}'$.  
In particular, 
 the classification
 of such pairs $(G,G')$
 is very simple, 
 because it is reduced to a classical result 
 when $G$ is compact \cite{xkramer}:
the pair $({\mathfrak{g}}_{\mathbb{C}}, {\mathfrak{g}}_{\mathbb{C}}')$ is the direct sum of the following ones
up to abelian ideals:
\begin{equation}
\label{eqn:BBlist}
({\mathfrak{sl}}_n, {\mathfrak{gl}}_{n-1}), 
({\mathfrak{so}}_{n}, {\mathfrak{so}}_{n-1}),
\text{ or } 
({\mathfrak{so}}_8, {\mathfrak{spin}}_7). 
\end{equation}

See \cite{KS15, xksbonvec}
{\it{e.g.}}, 
 for some recent developments
 in Stage C
 such as detailed analysis on symmetry breaking operators
 for some non-compact real forms of the pairs \eqref{eqn:BBlist}.

On the other hand, 
 the finite multiplicity property in Fact \ref{fact:KO} (2)
 depends on real forms $G$ and $G'$.  
It is fulfilled 
 for any Riemannian symmetric pair, 
 which is Harish-Chandra's admissibility theorem.  
More generally for non-compact $G'$, 
 the finite-multiplicity property \eqref{eqn:fweak} often holds
 when the restriction $\Pi|_{G'}$ decomposes discretely, 
 see \cite{xkInvent94, xkAnn98, xkInvent98} 
 for the general theory of \lq\lq{$G'$-admissible restriction}\rq\rq.  
However, 
 for some reductive symmetric pairs
 such as $(G,G')=(S L(p+q,{\mathbb{R}}), S O(p,q))$, 
 there exists $\Pi \in \operatorname{Irr}(G)$
 for which the finite multiplicity property \eqref{eqn:fweak}
 of the restriction $\Pi|_{G'}$ fails, 
 as we have seen in Example \ref{ex:fmult}.  
Such $\Pi$ is fairly \lq\lq{large}\rq\rq.

%%%%%%%%%%%%%%%%%%%%%%%%%%%%%%%%%%%%%%%
\subsection{Uniform estimates for a family of small representations}
%\label{subsec:}
%%%%%%%%%%%%%%%%%%%%%%%%%%%%%%%%%%%%%%%
The classification in \cite{xKMt}
 tells that the class of the reductive symmetric pairs $(G,G')$ 
 satisfying the finite multiplicity property \eqref{eqn:PP}
 is much broader than that of real forms $(G,G')$
 corresponding to those complex pairs in (5).  
However, 
 there also exist pairs $(G,G')$
 beyond the list of \cite{xKMt}
 for which we can still expect fruitful branching laws 
 of the restriction $\Pi|_{G'}$
 in Stages B and C
 for some $\Pi \in \operatorname{Irr}(G)$.  
Such $\Pi$ must be a  \lq\lq{small representation}\rq\rq.  
Here are some known examples:
\begin{example}
\label{ex:BC}
(1)\enspace({\bf{Stage B}})\enspace
See-saw dual pairs
 (\cite{Ku})
 yield explicit formul{\ae}
 of the multiplicity
 for the restriction of small representations, 
 with respect to some classical symmetric pairs
 $(G,G')$.  
\par\noindent
(2)\enspace ({\bf{Stage C}})\enspace
For $G=SL(n,{\mathbb{R}})$, 
 any degenerate representation $\Pi=\operatorname{Ind}_P^G({\mathbb{C}}_{\lambda})$
 induced from a \lq\lq{mirabolic subgroup}\rq\rq\ $P$ of $G$
 has the smallest Gelfand--Kirillov dimension
 $n({\mathfrak{g}}_{\mathbb{C}})$.  
For a unitary character ${\mathbb{C}}_{\lambda}$, 
 the Plancherel-type formula
 of the restriction $\Pi|_{G'}$
 is determined in \cite{KOP}
 for {\it{all}} symmetric pairs $(G,G')$.  
The feature of the restriction $\Pi|_{G'}$ is summarized
 as follows:
let $p+q=n$, 
 and when $n$ is even we write $n=2m$.  
\newline\noindent
$\cdot$
$G'=S(GL(p,{\mathbb{R}}) \times GL(q,{\mathbb{R}}))$.  
\newline\noindent
\hphantom{G'}
$\cdots$
Only continuous spectrum appears with multiplicity one.  
\newline\noindent
$\cdot$
$G'=SL(m,{\mathbb{C}}) \cdot {\mathbb{T}}$.  
\newline\noindent
\hphantom{G'}
$\cdots$
Only discrete spectrum appears with multiplicity one.  
\newline\noindent
$\cdot$
$G'=SO(p,q)$.  
\newline\noindent
\hphantom{G'}
$\cdots$
Discrete spectrum appears with multiplicity one, 
\newline\noindent
\hphantom{G'$\cdots$}
and continuous spectrum appears with multiplicity two.  
\newline\noindent
$\cdot$
$G'=Sp(m,{\mathbb{R}})$
\newline\noindent
\hphantom{G'}
$\cdots$ Almost irreducible (See also Theorem \ref{thm:irr}).  
\par
The uniform bounded multiplicity property
 in all these cases
 ({\bf{Stage A}})
 is guaranteed by Theorem \ref{thm:Joseph} in this article
 because $\operatorname{DIM}(\Pi)$ attains $n({\mathfrak{g}}_{\mathbb{C}})$, 
 and alternatively, 
 by another general result \cite[Thm.\ 4.2]{K22}.  
\par\noindent
(3)\enspace({\bf{Stage C}})\enspace
For the symmetric pair $(G,G')=(O(p,q), O(p_1,q_1) \times O(p_2,q_2))$
 with $p_1+p_2=p$ and $q_1+q_2=q$, 
 by using the Yamabe operator in conformal geometry, 
 discrete spectrum
 in the restriction $\Pi|_{G'}$
 of the minimal representation $\Pi$
 was obtained geometrically in \cite{KOr}.  
Moreover, 
 for the same pair $(G,G')$, 
 discrete spectrum in the restriction $\Pi|_{G'}$ was explicitly constructed
 and classified when $\Pi$ 
 belongs to cohomologically parabolic induced representation
 $A_{\mathfrak{q}}(\lambda)$ from
 a maximal $\theta$-stable parabolic subalgebra ${\mathfrak{q}}$
 in \cite{K21}.  
In contrast to Example \ref{ex:fmult} (2), 
 the multiplicity is one
 for any $p_1$, $q_1$, $p_2$, and $q_2$.  
\end{example}

In view of these nice cases, 
 and also in search for further broader settings 
 in which we could expect a detailed study of the restriction $\Pi|_{G'}$
 in Stages B and C, 
 we addressed the following:
\begin{problem}
[{\cite[Prob.\ 6.2]{xKVogan2015}}, {\cite[Prob.\ 1.1]{K22}}]
\label{q:Bdd}
Given a pair $G \supset G'$, 
 find a subset $\Omega$
 of ${\mathcal{M}}(G)$
 such that 
$
  \underset{\Pi \in \Omega} \sup \,\, m(\Pi|_{G'})<\infty.  
$
\end{problem}

Since branching problems often arise
 for a family of representations $\Pi$, 
 the formulation of Problem \ref{q:Bdd}
 is to work with the {\it{triple}} $(G,G',\Omega)$
 rather than the pair $(G,G')$ for the finer study of multiplicity estimates
 of the restriction $\Pi|_{G'}$.  
Fact \ref{fact:KO} (1) deals with the case
 $\Omega=\operatorname{Irr}(G)$.  
In \cite{K21, K22}, 
 we have considered Problem \ref{q:Bdd}
 including the following cases:
\begin{enumerate}
\item[(1)]
 $\Omega=\operatorname{Irr}(G)_H$, 
  the set of $H$-distinguished 
 irreducible representations of $G$
 where $(G,H)$ is a reductive symmetric pair;
\item[(2)]
$\Omega=\Omega_P$,
the set of induced representations from characters
 of a parabolic subgroup $P$ of $G$;
\item[(3)]
$\Omega=\Omega_{P,{\mathfrak{q}}}$,
 certain families of (vector-bundle valued) degenerate principal series representations.  
\end{enumerate}

For the readers' convenience, 
 we give a flavor
 of the solutions to Problem \ref{q:Bdd}
 in the above cases by quoting the criteria from \cite{K22}.  
See \cite{K22PJA} for a brief survey.

We write $G_{\mathbb{C}}$
 for the complexified Lie group $G$, 
 and $G_U$ for the compact real form of $G_{\mathbb{C}}$.  
For a reductive symmetric pair $(G,H)$, 
 one can define a Borel subgroup $B_{G/H}$
 which is a parabolic subgroup in $G_{\mathbb{C}}$, 
 see \cite[Def.\ 3.1]{K22PJA}.  
Note that $B_{G/H}$ is not necessarily solvable.  
For $\Omega=\operatorname{Irr}(G)_H$
 when $(G,H)$ is a reductive symmetric pair, 
one has the following answer to Problem \ref{q:Bdd}:

\begin{fact}
[{\cite[Thm.\ 1.4]{K22}}]
\label{fact:bdd}
Let $B_{G/H}$ be a Borel subgroup for $G/H$.  
Suppose $G'$ is an algebraic reductive subgroup of $G$.  
Then the following three conditions on the triple $(G,H,G')$
 are equivalent:
\begin{enumerate}
\item[{\rm{(i)}}]
$\underset{\Pi \in \operatorname{Irr}(G)_H}\sup m(\Pi|_{G'}) <\infty$.  

\item[{\rm{(ii)}}]
$G_{\mathbb{C}}/B_{G/H}$ is $G_U'$-strongly visible.  

\item[{\rm{(iii)}}]
$G_{\mathbb{C}}/B_{G/H}$ is $G_{\mathbb{C}}'$-spherical.  
\end{enumerate}
\end{fact}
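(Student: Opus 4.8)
The plan is to prove the equivalences by separating the purely geometric step (ii) $\Leftrightarrow$ (iii) from the analytic step (i) $\Leftrightarrow$ (iii), the latter carrying the representation-theoretic content.

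\emph{Step 1: (ii) $\Leftrightarrow$ (iii).} Set $D := G_{\mathbb{C}}/B_{G/H}$, a generalized flag variety of $G_{\mathbb{C}}$. On $D$ the compact group $G_U'$, the complexification $G_{\mathbb{C}}'$, and a suitable anti-holomorphic diffeomorphism act compatibly, so $D$ lies in the scope of Kobayashi's theory of visible actions. I would invoke the propagation principle from that theory: a holomorphic $G_{\mathbb{C}}'$-action on such a $D$ is spherical if and only if the associated $G_U'$-action is strongly visible. In one direction, an open orbit of a Borel subgroup of $G_{\mathbb{C}}'$ produces, via a slice through a generic point together with the anti-holomorphic involution preserving it, the data witnessing strong visibility; the converse is a dimension count on the slice combined with Vinberg--Kimelfeld. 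This step is thus a citation plus a verification that $B_{G/H}$ --- which is parabolic in $G_{\mathbb{C}}$ but need not be solvable --- meets the required hypotheses.

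\emph{Step 2: (iii) $\Leftrightarrow$ (i).} The input here is the structure of $H$-distinguished representations. By the boundary theory of the symmetric space $G/H$ (Oshima--Matsuki) and the definition of $B_{G/H}$ through the restricted root system / little Weyl group of $G/H$, every $\Pi \in \operatorname{Irr}(G)_H$ is a subquotient of a degenerate principal series induced from a real form of the parabolic $B_{G/H}$, and correspondingly admits a nonzero continuous $G$-map into $C^\infty(G/H)$. On the $G'$-side, using a Casselman-type embedding $\pi \hookrightarrow \operatorname{Ind}_{P'}^{G'}(\cdots)$ from a minimal parabolic $P'$ of $G'$, valid uniformly over $\pi \in \operatorname{Irr}(G')$, the space $\invHom{G'}{\Pi|_{G'}}{\pi}$ is realized as a space of $G'$-invariant distributions on $G/H \times G'/P'$ subject to the holonomic system coming from the infinitesimal character of $\Pi$ and from the $B_{G/H}$-induced structure. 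The dimension of such a solution space is bounded --- uniformly in $\Pi$ and $\pi$ --- by a constant depending only on the $G_{\mathbb{C}}'$-orbit structure on $D$; by the Kobayashi--Oshima machinery underlying Fact~\ref{fact:KO}(1) that constant is finite exactly when a Borel subgroup of $G_{\mathbb{C}}'$ has an open orbit on $D$, i.e.\ when (iii) holds. For the converse, if (iii) fails one transports the non-sphericity of $D$ into growth of these invariant-distribution spaces along a continuous family of $H$-distinguished principal series, producing multiplicities of unbounded dimension and hence the negation of (i).

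\emph{Main obstacle.} The hard part is the uniform upper bound in Step 2: controlling $\dim \invHom{G'}{\Pi|_{G'}}{\pi}$ simultaneously over the entire family $\Pi \in \operatorname{Irr}(G)_H$ and all $\pi \in \operatorname{Irr}(G')$. This demands a realization of $H$-distinguished representations inside $C^\infty(G/H)$ that is well-behaved under restriction to $G'$ and uniform in the continuous parameter --- so that genuine subquotients, not merely the inducing modules, are handled --- together with the finiteness-of-holonomic-solutions estimate that converts the complex sphericity of $G_{\mathbb{C}}/B_{G/H}$ into an a priori bound on $G'$-invariant distributions on the real double fibration. This is precisely where the analytic-continuation and Borel-orbit-counting technology behind the equivalence ``bounded multiplicity $\Leftrightarrow$ sphericity'' must be deployed.
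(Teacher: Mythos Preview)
The paper does not prove this statement: it is recorded as a \emph{Fact} quoted from \cite[Thm.\ 1.4]{K22}, with no argument supplied here. There is therefore no in-paper proof to compare your proposal against. The only hint the present paper gives about the method is the remark following Fact~\ref{fact:Qsph} that ``the proof in \cite{K22, Tu} used a theory of holonomic ${\mathcal{D}}$-modules for the `if' part,'' together with the earlier remark that hyperfunction boundary maps and a generalized Poisson transform underlie Fact~\ref{fact:KO}.

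Your sketch is broadly consonant with those hints: you correctly identify the geometric equivalence (ii)\,$\Leftrightarrow$\,(iii) as an instance of the visible-action/sphericity dictionary for partial flag varieties, and you correctly locate the analytic weight in a uniform holonomic bound on symmetry breaking operators. Two points deserve caution. First, your Step~2 realizes $\Pi \in \operatorname{Irr}(G)_H$ inside $C^\infty(G/H)$ via a nonzero $G$-map, but $H$-distinguished in the sense used here means $\invHom{G}{\Pi}{C^\infty(G/H)} \ne 0$, which gives an intertwiner that need not be injective; you must work with the image (a quotient of $\Pi$) or, more robustly, with the full subquotient structure of the degenerate principal series attached to $B_{G/H}$, and check that the holonomic estimate applies to \emph{all} irreducible subquotients, not only to the induced module itself. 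Second, the converse direction (failure of (iii) $\Rightarrow$ failure of (i)) is only gestured at; producing an explicit family of $H$-distinguished $\Pi$ with unbounded multiplicity from non-sphericity requires constructing enough invariant distributions, which in \cite{K14} is done via a generalized Poisson transform rather than by a soft growth argument. These are genuine technical steps, not formalities, and your outline would need to supply them to stand as a proof.
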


For $\Omega=\Omega_P$, 
 one has the following answer to Problem \ref{q:Bdd}:
\begin{fact}
[{\cite[Ex.\ 4.5]{K22}}, {\cite{Tu}}]
\label{fact:introQsph2}
Let $G \supset G'$ be a pair of real reductive algebraic Lie groups, 
 and $P$ a parabolic subgroup of $G$.  
Then one has the equivalence on the triple $(G,G';P):$
\par\noindent
(i)\enspace
$\underset{\Pi \in \Omega_P}\sup m(\Pi|_{G'})< \infty$.  
\par\noindent
(ii)\enspace
$G_{\mathbb{C}}/P_{\mathbb{C}}$ is strongly $G_U'$-visible.  
\par\noindent
(iii)\enspace
$G_{\mathbb{C}}/P_{\mathbb{C}}$ is $G_{\mathbb{C}}'$-spherical.  
\end{fact}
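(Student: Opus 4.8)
\emph{Strategy.} I would prove the chain $(\mathrm{i})\Rightarrow(\mathrm{iii})\Rightarrow(\mathrm{ii})\Rightarrow(\mathrm{i})$, but it is cleaner to isolate the purely geometric equivalence $(\mathrm{ii})\Leftrightarrow(\mathrm{iii})$, which involves no representation theory, from the analytic equivalence $(\mathrm{i})\Leftrightarrow(\mathrm{iii})$, where the work lies. For $(\mathrm{ii})\Leftrightarrow(\mathrm{iii})$ I would appeal to the general principle of the theory of visible actions on complex manifolds: for a connected compact group acting holomorphically on a generalized flag variety, strong visibility of that action is equivalent to sphericity of the complexified action. In one direction an open $B'_{\mathbb{C}}$-orbit on $G_{\mathbb{C}}/P_{\mathbb{C}}$ yields a totally real slice transverse to generic $G'_U$-orbits; in the other a strongly visible action forces the $B'_{\mathbb{C}}$-invariant rational functions on $G_{\mathbb{C}}/P_{\mathbb{C}}$ to be constant. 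This step is essentially the cited result (\cite[Ex.\ 4.5]{K22}, \cite{Tu}), which I would only transcribe.

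The substantive point is $(\mathrm{iii})\Rightarrow(\mathrm{i})$. Realize $\Pi=\operatorname{Ind}_P^G(\mathbb{C}_\lambda)$ on $C^\infty(G/P,\mathcal{L}_\lambda)$ and, using Casselman's subrepresentation theorem, embed an arbitrary $\pi\in\operatorname{Irr}(G')$ into a principal series $\operatorname{Ind}_{P'}^{G'}(W')$ from a minimal parabolic $P'$ of $G'$; by Frobenius reciprocity in the smooth moderate-growth category a symmetry breaking operator $\Pi|_{G'}\to\pi$ becomes a $G'$-invariant distribution kernel on $G/P\times G'/P'$, hence is supported on the $G'$-orbits there, which correspond to $P'$-orbits on $G/P$. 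The crux is to pass to the complex cotangent picture: $G'_{\mathbb{C}}$-sphericity of $G_{\mathbb{C}}/P_{\mathbb{C}}$ is equivalent to a coisotropy property of the Hamiltonian $G'_{\mathbb{C}}$-action on $T^*(G_{\mathbb{C}}/P_{\mathbb{C}})$. Feeding this into the associated-variety/characteristic-variety estimate of multiplicities — the mechanism underlying Fact \ref{fact:KO}, and, in its symplectic form, the implication $(\mathrm{ii})\Rightarrow(\mathrm{i})$ of Fact \ref{fact:Kadm} — bounds $\dim_{\mathbb{C}}\operatorname{Hom}_{G'}(\Pi|_{G'},\pi)$ by a single invariant of this Hamiltonian space, finite and independent of $\lambda$ and of $\pi$. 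Coisotropy is precisely what controls the contributions of the non-open $G'$-orbits in their transverse directions; a bare count of $P'$-orbits on $G/P$ would only bound the multiplicity up to a factor like $\dim W'$, which is unbounded over $\operatorname{Irr}(G')$.

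For $(\mathrm{i})\Rightarrow(\mathrm{iii})$ I would argue by contraposition: if $G_{\mathbb{C}}/P_{\mathbb{C}}$ is not $G'_{\mathbb{C}}$-spherical, the field of $B'_{\mathbb{C}}$-invariant rational functions is positive-dimensional, and I would turn this into a continuous — hence unbounded-dimensional — family of symmetry breaking operators for a suitable $\lambda$ and a fixed $\pi$, by inducing characters along the top-dimensional $B'$-orbits and computing leading terms, in the spirit of the lower bounds of \cite{xktoshima}. \textbf{Main obstacle.} The difficulty is exactly the uniformity in $\pi$ in $(\mathrm{iii})\Rightarrow(\mathrm{i})$: one must genuinely exploit the coisotropy input — equivalently, the boundedness of the transverse symbol calculus of symmetry breaking operators along the non-open $G'$-orbits — rather than an orbit count, all while remaining inside $\mathcal{M}(G)$, since the members of $\Omega_P$ are in general reducible and non-unitary and Frobenius reciprocity has to be handled carefully in the smooth, moderate-growth setting.
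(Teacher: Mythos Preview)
The paper does not prove this statement at all: it is recorded as a \emph{Fact} with attributions to \cite[Ex.\ 4.5]{K22} and \cite{Tu}, and no argument is given in the text. It appears in Section~\ref{sec:2} purely as background, alongside Facts~\ref{fact:Kadm}, \ref{fact:KO}, \ref{fact:bdd}, \ref{fact:Qsph}, and \ref{fact:Ki}, none of which are proved here. So there is no ``paper's own proof'' to compare your proposal against.

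That said, a brief comment on your outline. Your sketch of $(\mathrm{ii})\Leftrightarrow(\mathrm{iii})$ and of $(\mathrm{iii})\Rightarrow(\mathrm{i})$ via coisotropy and holonomic ${\mathcal D}$-module estimates is broadly in line with how the cited references approach these implications (indeed the paper explicitly says just after Fact~\ref{fact:Ki} that the proof in \cite{K22, Tu} uses a theory of holonomic ${\mathcal{D}}$-modules for the ``if'' part). Your contrapositive idea for $(\mathrm{i})\Rightarrow(\mathrm{iii})$ is the right shape but is only a gesture: producing an actual unbounded family of symmetry breaking operators from non-sphericity requires real work (this is the content of the lower bounds in \cite{K14, xktoshima}), and you have not indicated how you would carry it out for the specific family $\Omega_P$ rather than for all of $\operatorname{Irr}(G)$. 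In any case, for the purposes of this paper the statement is simply quoted, not proved.
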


The following is a useful extension of Fact \ref{fact:introQsph2}.  
\begin{fact}
[{\cite[Thm.\ 4.2]{K22}}]
\label{fact:Qsph}
Let $G \supset G'$ be a pair of real reductive algebraic Lie groups, 
 $P$ a parabolic subgroup of $G$, 
 and $Q$ a complex parabolic subgroup of $G_{\mathbb{C}}$
 such that ${\mathfrak{q}} \subset {\mathfrak{p}}_{\mathbb{C}}$.  
One defines a subset $\Omega_{P,{\mathfrak{q}}}$ in ${\mathcal{M}}(G)$
 that contains $\Omega_P$
 (see \cite{K22} for details).  
Then the following three conditions 
 on $(G, G';P, Q)$ are equivalent:
\begin{enumerate}
\item[{\rm{(i)}}]
$
  \underset{\Pi \in \Omega_{P,{\mathfrak{q}}}}\sup\,\,
  m(\Pi|_{G'})
   <\infty.  
$

\item[{\rm{(ii)}}]
$G_{\mathbb{C}}/Q$ is $G_U'$-strongly visible.  

\item[{\rm{(iii)}}]
$G_{\mathbb{C}}/Q$ is $G_{\mathbb{C}}'$-spherical.  
\end{enumerate}
\end{fact}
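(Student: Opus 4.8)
The plan is to prove Fact~\ref{fact:Qsph} as the combination of a purely geometric equivalence (ii)~$\iff$~(iii) and an analytic equivalence linking either of these to the representation-theoretic bound (i); concretely I would establish (ii)~$\iff$~(iii), then (iii)~$\Rightarrow$~(i), and finally (i)~$\Rightarrow$~(iii) by contraposition, which closes the triangle. The geometric equivalence for the generalized flag variety $G_{\mathbb{C}}/Q$ asserts that, for the compact real form $G_U'$ of $G_{\mathbb{C}}'$, the $G_U'$-action is strongly visible precisely when the $G_{\mathbb{C}}'$-action has an open Borel orbit. I would deduce this from the theory of visible actions together with the Guillemin--Sternberg--Vinberg characterization of multiplicity-free (coisotropic) Hamiltonian actions: equipping $G_{\mathbb{C}}/Q$ with a $G_U$-invariant Kähler form, sphericity of $G_{\mathbb{C}}/Q$ under $G_{\mathbb{C}}'$ is equivalent to the Hamiltonian $G_U'$-action being coisotropic, and the latter is exactly the regime in which one can produce an anti-holomorphic diffeomorphism preserving each $G_U'$-orbit together with a totally real slice $S$ with $G_U'\cdot S$ open, i.e.\ strong visibility. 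The converse direction extracts coisotropy of the generic orbit from the dimension of the totally real slice.

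The core is the implication (iii)~$\Rightarrow$~(i). I would realize each $\Pi\in\Omega_{P,\mathfrak{q}}$ as a subquotient of a (vector-bundle valued) degenerate principal series on the real flag manifold $G/P$, so that every symmetry breaking operator in $\invHom{G'}{\Pi|_{G'}}{\pi}$ is described, via Frobenius reciprocity and duality, as a $G'$-equivariant distribution section supported on the $G'$-orbits of $G/P$ (together with a corresponding flag for $\pi$). The role of the complex parabolic $Q$ with $\mathfrak{q}\subset\mathfrak{p}_{\mathbb{C}}$ is that it governs the asymptotic support, hence the associated variety, of the members of the enlarged family $\Omega_{P,\mathfrak{q}}$, so that the controlling geometry is that of $G_{\mathbb{C}}'$ acting on $G_{\mathbb{C}}/Q$ rather than merely on $G_{\mathbb{C}}/P_{\mathbb{C}}$. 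Sphericity of $G_{\mathbb{C}}/Q$ then bounds the dimension of the space of such invariant distributions on each orbit by the geometry of the generic (open) Borel orbit, and---crucially---this bound is independent of the inducing parameter, since the $G_{\mathbb{C}}'$-orbit structure on $G_{\mathbb{C}}/Q$ does not depend on the twist. Summing the finitely many orbit contributions yields the uniform bound $\sup_{\Pi\in\Omega_{P,\mathfrak{q}}} m(\Pi|_{G'})<\infty$. This step subsumes the case $Q=P_{\mathbb{C}}$ already recorded in Fact~\ref{fact:introQsph2}.

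For (i)~$\Rightarrow$~(iii) I would argue by contraposition. If $G_{\mathbb{C}}/Q$ is not $G_{\mathbb{C}}'$-spherical, then a Borel subgroup of $G_{\mathbb{C}}'$ has no open orbit, so the generic isotropy carries a positive-dimensional family of invariants; transporting this to the real picture, one constructs a sequence of members of $\Omega_{P,\mathfrak{q}}$ (varying the inducing character or bundle within the family) whose restrictions to $G'$ admit an unbounded number of independent symmetry breaking operators into a suitable family of $\pi\in\operatorname{Irr}(G')$, so that $m(\Pi|_{G'})$ is either infinite or unbounded over the family. Combined with the already-established equivalence (ii)~$\iff$~(iii), this completes the proof of all three equivalences.

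I expect the main obstacle to be the uniform analytic estimate in (iii)~$\Rightarrow$~(i): one must control the dimension of the invariant-distribution spaces \emph{simultaneously} over the whole family $\Omega_{P,\mathfrak{q}}$ and over all $\pi\in\operatorname{Irr}(G')$, and the enlargement from $\Omega_P$ to $\Omega_{P,\mathfrak{q}}$---which admits representations whose support is refined by $Q$---requires showing that passing from $P_{\mathbb{C}}$ to the smaller parabolic $Q$ does not enlarge the generic invariants beyond what sphericity of $G_{\mathbb{C}}/Q$ permits. The delicate point is the parameter-independence of the bound, for which the decisive input is that the $G_{\mathbb{C}}'$-orbit decomposition of $G_{\mathbb{C}}/Q$ is a fixed finite stratification, while only the holomorphically varying bundle twist changes.
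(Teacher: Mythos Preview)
The paper does not prove this statement: Fact~\ref{fact:Qsph} is quoted verbatim from \cite[Thm.~4.2]{K22} as background, so there is no proof in the present paper to compare your proposal against. The only hint the paper gives about the method is the remark at the end of Section~\ref{sec:2} that the proof in \cite{K22,Tu} uses the theory of holonomic $\mathcal{D}$-modules for the ``if'' part.

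That said, your sketch has a genuine gap precisely where you yourself locate the main obstacle, namely the uniform bound in (iii)~$\Rightarrow$~(i). Realizing symmetry breaking operators as $G'$-invariant distributions and noting that the $G_{\mathbb{C}}'$-orbit stratification of $G_{\mathbb{C}}/Q$ is a fixed finite stratification is not sufficient to produce a bound that is independent both of the inducing data and of $\pi\in\operatorname{Irr}(G')$. On each stratum the space of invariant distributions supported there is filtered by transverse order, and nothing in your argument caps that order. The mechanism actually used in \cite{K22} is that sphericity of $G_{\mathbb{C}}/Q$ under $G_{\mathbb{C}}'$ forces the relevant system of differential equations (coming from the annihilator ideal of $\Pi$ together with the $Z(\mathfrak{g}_{\mathbb{C}}')$-action through $\pi$) to be \emph{holonomic}, and then Kashiwara's finiteness theorem for solutions of holonomic systems gives the uniform bound. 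Your sentence ``sphericity of $G_{\mathbb{C}}/Q$ then bounds the dimension of the space of such invariant distributions on each orbit by the geometry of the generic (open) Borel orbit'' does not supply this; it is exactly the step that requires the $\mathcal{D}$-module input.

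For (i)~$\Rightarrow$~(iii) your contrapositive strategy is the right shape, but the passage from ``no open Borel orbit on the complex side'' to ``unbounded family of symmetry breaking operators on the real side'' is not automatic; one needs an explicit construction (in \cite{K14,xktoshima} this is done via a generalized Poisson transform), and your sketch does not indicate how the positive-dimensional invariants are realized as actual continuous $G'$-intertwiners for members of $\Omega_{P,\mathfrak{q}}$.
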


These criteria lead us to classification results
 for the triples $(G,G',\Omega)$, 
 see \cite{K21, K22, K22PJA}
 and references therein.

The representations $\Pi$ in $\Omega = \operatorname{Irr}(G)_H$
 or $\Omega_P$, $\Omega_{P,{\mathfrak{q}}}$
 are fairly small, 
however, 
 the classification results in \cite{K22} 
 indicate that some symmetric pairs $(G,G')$ still do not appear
 for such a family $\Omega$.  
A clear distinction from these previous results is
 that Theorem \ref{thm:Joseph} allows {\it{all}} symmetric pairs $(G,G')$
 for an affirmative answer to Problem \ref{q:Bdd}
 in the extremal case
where $\Omega=\{\Pi\}$
 with $\operatorname{DIM}(\Pi)=n({\mathfrak{g}}_{\mathbb{C}})$.

Concerning the method of the proof, 
 we utilized in \cite{xktoshima} 
 hyperfunction boundary maps 
 for the \lq\lq{if}\rq\rq\ part
 ({\it{i.e.,}} the sufficiency of the bounded multiplicity property)
 and a generalized Poisson transform \cite{K14}
 for the \lq\lq{only if}\rq\rq\ part
 in the proof of Fact \ref{fact:KO}.  
The proof in \cite{K22, Tu} used a theory of holonomic ${\mathcal{D}}$-modules
 for the \lq\lq{if}\rq\rq\ part.  
Our proof in this article still uses
 a theory of ${\mathcal{D}}$-modules, 
 and more precisely, 
 the following:

\begin{fact}
[{\cite{Ki}}]
\label{fact:Ki}
Let ${\mathcal{I}}$ be the annihilator
 of $\Pi \in {\mathcal{M}}(G)$
 in the enveloping algebra $U({\mathfrak{g}}_{\mathbb{C}})$.  
Assume that the $G_{\mathbb{C}}'$-action
 on the associated variety of ${\mathcal{I}}$
 is coisotropic
 (Definition \ref{def:coisotropic}).  
Then the restriction $\Pi|_{G'}$ has the bounded multiplicity property
 (Definition \ref{def:bdd}).  
\end{fact}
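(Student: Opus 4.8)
The plan is to reformulate the bounded multiplicity statement as a \emph{uniform} bound on the degree of a holonomic system, and to extract that bound from the coisotropy hypothesis. Write $\mathcal{V}$ for the associated variety of $\mathcal{I}$ and $A := U(\mathfrak{g}_{\mathbb{C}})/\mathcal{I}$, which acts faithfully on the underlying $(\mathfrak{g},K)$-module $X$ of $\Pi$. First I would reduce to Harish--Chandra modules: by the Casselman--Wallach theory and automatic continuity of $(\mathfrak{g}',K')$-homomorphisms it suffices to bound $\dim \invHom{(\mathfrak{g}',K')}{X}{Y}$ uniformly over the Harish--Chandra modules $Y$ of $\pi \in \operatorname{Irr}(G')$; one argues with individual $\operatorname{Hom}$-spaces, so no $G'$-admissibility of $\Pi|_{G'}$ is needed. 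Using Casselman's subrepresentation theorem one models $X$, $Y$ inside principal series, so that a symmetry breaking operator becomes a $G'_{\mathbb{C}}$-intertwining distribution kernel on $G_{\mathbb{C}}/B \times G'_{\mathbb{C}}/B'$, supported on the finitely many $G'_{\mathbb{C}}$-orbits and annihilated by the system of differential equations coming from $\mathcal{I}$; equivalently, and more functorially, one localizes $X$ and $Y$ via Beilinson--Bernstein to twisted $\mathcal{D}$-modules $\mathcal{M}$ on $G_{\mathbb{C}}/B$ and $\mathcal{N}$ on $G'_{\mathbb{C}}/B'$ and realizes the $\operatorname{Hom}$-space as maps, along a correspondence relating $G'_{\mathbb{C}}/B'$ and $G_{\mathbb{C}}/B$, between the pullback--pushforward of $\mathcal{M}$ and the module $\mathcal{N}$.

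The main point is then the holonomic estimate: the dimension of such a $\operatorname{Hom}$-space is bounded by the total multiplicity of the characteristic cycle of the resulting $\mathcal{D}$-module on $G'_{\mathbb{C}}/B'$. This is where the hypothesis enters. Under the moment map $T^{\ast}(G_{\mathbb{C}}/B) \to \mathfrak{g}_{\mathbb{C}}^{\ast}$ the characteristic variety of $\mathcal{M}$ maps into $\mathcal{V}$, a union of nilpotent coadjoint orbits; restricting to $G'_{\mathbb{C}}$ and carrying out the symplectic reduction attached to the correspondence, the characteristic variety of the output $\mathcal{D}$-module is controlled by the $G'_{\mathbb{C}}$-action on $\mathcal{V}$ together with the varying characteristic variety of $\mathcal{N}$. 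Coisotropy of the $G'_{\mathbb{C}}$-action on $\mathcal{V}$ is exactly the condition that the generic fibre of the relevant $\mathfrak{g}'_{\mathbb{C}}$-moment map, after reduction, is zero-dimensional; hence the output characteristic variety remains isotropic and its degree is bounded by a constant $N$ depending only on $\mathcal{I}$ and the pair $(\mathfrak{g}_{\mathbb{C}},\mathfrak{g}'_{\mathbb{C}})$, not on $\mathcal{N}$, i.e.\ not on $\pi$. Feeding this uniform bound into the holonomic estimate gives $\dim \invHom{G'}{\Pi|_{G'}}{\pi} \le N$ for all $\pi$, which is the assertion.

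A parallel, purely algebraic route is perhaps cleaner to make rigorous and runs in exact analogy. Let $R$ be the centralizer of $\mathfrak{g}'_{\mathbb{C}}$ in $A$, intersected with the $K'$-invariants; since $A$ acts faithfully on $X$ and $R$ commutes with the $(\mathfrak{g}',K')$-action, $R$ acts on every multiplicity space $\invHom{(\mathfrak{g}',K')}{X}{Y}$ by precomposition. With respect to the Poincar\'e--Birkhoff--Witt filtration, $\operatorname{gr} R$ embeds, modulo nilpotents, into $\mathbb{C}[\mathcal{V}]^{G'_{\mathbb{C}}}$, and by Vinberg's criterion coisotropy of the $G'_{\mathbb{C}}$-action on $\mathcal{V}$ is equivalent to the Poisson-commutativity of $\mathbb{C}[\mathcal{V}]^{G'_{\mathbb{C}}}$. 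Hence $\operatorname{gr} R$ is Poisson-commutative, so $R$ is commutative modulo lower-order terms and in particular satisfies a polynomial identity, whence all of its simple modules occurring here have dimension bounded by a constant depending only on $\mathcal{I}$ and the pair. Using irreducibility of $Y$ --- so that any single $K'$-type generates it --- one identifies $\invHom{(\mathfrak{g}',K')}{X}{Y}$ with such a bounded $R$-module, and the uniform bound follows.

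The heart of the matter, and the step I expect to be the main obstacle, is making the implication ``coisotropy $\Rightarrow$ uniform degree (respectively, bounded polynomial identity)'' \emph{quantitative} rather than merely qualitative. For a single $\pi$, mere holonomicity --- equivalently, real sphericity of $(G \times G')/\operatorname{diag} G'$, cf.\ Fact \ref{fact:KO}(2) --- already gives finiteness; what coisotropy must buy is that the relevant degree does not grow as $\pi$ ranges over $\operatorname{Irr}(G')$. Establishing this forces one to work with the Poisson scheme structure on $\operatorname{gr} A$ --- the associated variety $\mathcal{V}$ is a union of closures of singular nilpotent orbits and may be non-reduced, so smooth symplectic geometry is unavailable --- and to control carefully how $X$ is generated over $R$ (respectively, how the characteristic cycle behaves under symplectic reduction), so that the $K'$-type combinatorics do not reintroduce unbounded multiplicities. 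The reduction from smooth representations to Harish--Chandra modules in the non-$G'$-admissible case is a second, more routine, technical point.
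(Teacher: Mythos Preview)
The paper does not prove this statement: it is quoted as a fact from the external reference \cite{Ki} (Kitagawa, \emph{Uniformly bounded multiplicities, polynomial identities and coisotropic actions}) and is used as a black box in deriving Theorems \ref{thm:Joseph} and \ref{thm:tensormin} from the geometric Theorem \ref{thm:22030921}. So there is no in-paper proof to compare against.

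That said, of your two sketches the second, algebraic, route is the one the cited source actually takes, as the title already signals. The skeleton you give --- pass to $A=U({\mathfrak g}_{\mathbb C})/\mathcal I$, look at the subalgebra commuting with the $({\mathfrak g}',K')$-action, observe that its associated graded sits inside $\mathbb C[\mathcal V]^{G'_{\mathbb C}}$, invoke Vinberg's equivalence between coisotropy and Poisson-commutativity of the invariant ring, and then use PI-theory (Kaplansky/Posner) to bound the dimensions of simple modules uniformly --- is the right architecture. Your first, $\mathcal D$-module/characteristic-cycle route is closer in spirit to the methods behind Facts \ref{fact:KO} and \ref{fact:Qsph}; it can presumably be made to work but is not the argument of \cite{Ki}.

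One genuine soft spot in your sketch deserves flagging. The sentence ``$\operatorname{gr} R$ is Poisson-commutative, so $R$ is commutative modulo lower-order terms and in particular satisfies a polynomial identity'' hides the real work. Poisson-commutativity of $\operatorname{gr} R$ only says that commutators in $R$ drop \emph{two} filtration steps rather than one; by itself this does not force a polynomial identity, and there are filtered algebras with Poisson-commutative associated graded that are not PI. What makes the argument go through in \cite{Ki} is an additional finiteness input coming from the fact that $A$ is a Harish-Chandra bimodule of finite length (so $\operatorname{gr} A$ is finite over $\mathbb C[\mathcal V]$ with controlled multiplicity), together with a careful identification of the multiplicity space as a module over an honestly PI quotient. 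You have correctly located this as ``the step I expect to be the main obstacle''; just be aware that the passage from Poisson-commutativity to a uniform PI-degree is a theorem, not a tautology, and is where the substance of \cite{Ki} lies.
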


We note
 that the assumption in Fact \ref{fact:Ki} depends
 only on the complexification
 of the pair $({\mathfrak{g}}, {\mathfrak{g}}')$
 of the Lie algebras.  
Thus the proof of Theorems \ref{thm:Joseph} and \ref{thm:tensormin}
 is reduced to a geometric question
 on {\it{holomorphic}} coisotropic actions
 on {\it{complex}} nilpotent coadjoint orbits, 
 which will be proved in Theorem \ref{thm:22030921}.

%%%%%%%%%%%%%%%%%%%%%%%%%%%%%%%%%%%%%%%
\section{Coisotropic action on coadjoint orbits}
\label{sec:coiso}
%%%%%%%%%%%%%%%%%%%%%%%%%%%%%%%%%%%%%%%
Let $V$ be a vector space
 endowed with a symplectic form $\omega$.  
A subspace $W$ is called {\it{coisotropic}}
 if $W^{\perp} \subset W$, 
 where 
\[
   W^{\perp}:=\{v \in V: \text{$\omega(v,\cdot)$ vanishes on $W$}\}.  
\]

The concept of coisotropic actions is defined infinitesimally as follows.  
\begin{definition}
[Huckleberry--Wurzbacher {\cite{huwu90}}]
\label{def:coisotropic}
Let $H$ be a connected Lie group, 
 and $X$ a Hamiltonian $H$-manifold.  
The $H$-action is called
 {\it{coisotropic}}
 if there is an $H$-stable open dense subset $U$ of $X$
 such that $T_x(H \cdot x)$ is a coisotropic subspace
 in the tangent space $T_x X$ for all $x \in U$.  
\end{definition}

Any coadjoint orbit of a Lie group $G$
 is a Hamiltonian $G$-manifold
 with the Kirillov--Kostant--Souriau symplectic form.  
The main result of this section is the following:

\begin{theorem}
\label{thm:22030921}
Let ${\mathbb{O}}_{\operatorname{min},{\mathbb{C}}}$ be the minimal nilpotent coadjoint orbit of a connected complex simple Lie group $G_{\mathbb{C}}$.  
\begin{enumerate}
\item[{\rm{1)}}]
The diagonal action of $G_{\mathbb{C}}$
 on 
$
   {\mathbb{O}}_{\operatorname{min},{\mathbb{C}}} \times {\mathbb{O}}_{\operatorname{min},{\mathbb{C}}}
$
 is coisotropic.  
\item[{\rm{2)}}]
For any symmetric pair $(G_{\mathbb{C}}, K_{\mathbb{C}})$, 
 the $K_{\mathbb{C}}$-action 
 on ${\mathbb{O}}_{\operatorname{min},{\mathbb{C}}}$
 is coisotropic.  
\end{enumerate}
\end{theorem}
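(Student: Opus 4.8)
The strategy is to reduce both statements to a uniform linear-algebra computation on the model realization of $\mathbb{O}_{\operatorname{min},\mathbb{C}}$ as the $G_{\mathbb{C}}$-orbit of a highest root vector $e_\theta \in \mathfrak{g}_{\mathbb{C}}^\ast \cong \mathfrak{g}_{\mathbb{C}}$ (identifying $\mathfrak{g}_{\mathbb{C}}$ with its dual via the Killing form). First I would recall that for a Hamiltonian $H$-manifold $X$ with moment map $\mu$, the $H$-action is coisotropic if and only if the generic $H$-orbit is coisotropic, which by a standard computation of the Kirillov--Kostant--Souriau form amounts to: at a generic point $x$, the isotropy Lie algebra $\mathfrak{h}_x$ together with $\mu(x)$ controls the tangent space, and more precisely $T_x(H\cdot x)$ is coisotropic in $T_xX$ iff $\dim(H\cdot x) + \dim(H_{\mu(x)}\cdot x) \ge \dim X$ — equivalently, the generic fiber of the moment map meets $H\cdot x$ in a set of the expected dimension (this is the Guillemin--Sternberg/Huckleberry--Wurzbacher picture). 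An alternative and cleaner route, which I would actually follow, is to use the known equivalence (for $X$ an affine or quasi-affine variety) between the $H$-action being coisotropic and $\mathbb{C}[X]$ having a certain Poisson-commutativity: the field of $H$-invariant rational functions $\mathbb{C}(X)^H$ is Poisson-commutative. For a coadjoint orbit, this translates into a concrete statement about $H$-invariants in $\mathbb{C}[\overline{\mathbb{O}_{\operatorname{min},\mathbb{C}}}]$, which for the minimal orbit is a very small, well-understood ring.

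For part 2), the key observation is that $\overline{\mathbb{O}_{\operatorname{min},\mathbb{C}}}$ is (the cone over) $G_{\mathbb{C}}/Q$ for $Q$ the parabolic stabilizing the highest root line, and its coordinate ring is $\bigoplus_{k\ge 0} V(k\theta)^\ast$, the direct sum of irreducible $G_{\mathbb{C}}$-modules with highest weights the multiples of the highest root $\theta$. So the $K_{\mathbb{C}}$-action on $\mathbb{O}_{\operatorname{min},\mathbb{C}}$ is coisotropic precisely when $\dim \operatorname{Hom}_{K_{\mathbb{C}}}(\mathbf{1}, V(k\theta)) \le 1$ for all $k$ — i.e. each $V(k\theta)$ is $K_{\mathbb{C}}$-spherical, or more precisely has at most a one-dimensional space of $K_{\mathbb{C}}$-fixed vectors, possibly up to a bounded-rank correction coming from the "symplectic leaves" refinement of coisotropy. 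This is exactly the kind of statement that is classically known: the multiplicity of the trivial $K_{\mathbb{C}}$-type in $V(k\theta)$ is governed by the restricted root system of the symmetric space $G_{\mathbb{C}}/K_{\mathbb{C}}$ and the fact that $\theta$ restricts to (a multiple of) a single restricted root, combined with the Cartan--Helgason theorem. I would run through Berger's classification of symmetric pairs, or rather organize the argument by the type of the restricted root system, to verify the one-dimensionality of the space of $K_{\mathbb{C}}$-invariants in each $V(k\theta)$; the group-manifold case $(G_{\mathbb{C}}\times G_{\mathbb{C}}, \operatorname{diag})$ is part 1) and is handled by the same mechanism since $V(k\theta)\otimes V(k\theta)^\ast$ contains the trivial representation with multiplicity one.

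For part 1), the cleanest approach is to realize it as the special case $K_{\mathbb{C}} = \operatorname{diag} G_{\mathbb{C}}$ inside $G_{\mathbb{C}}\times G_{\mathbb{C}}$: the minimal orbit of $G_{\mathbb{C}}\times G_{\mathbb{C}}$ is $\mathbb{O}_{\operatorname{min},\mathbb{C}}\times\{0\}$ or $\{0\}\times\mathbb{O}_{\operatorname{min},\mathbb{C}}$, so this literally is not a symmetric-pair instance — instead one works directly with $\mathbb{O}_{\operatorname{min},\mathbb{C}}\times\mathbb{O}_{\operatorname{min},\mathbb{C}}$ under diagonal $G_{\mathbb{C}}$. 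Here I would compute: a generic pair $(x,y)$ of highest-root vectors has diagonal stabilizer $\mathfrak{g}_{\mathbb{C}}^x \cap \mathfrak{g}_{\mathbb{C}}^y$, and the coisotropy condition reads $2\dim\mathbb{O}_{\operatorname{min},\mathbb{C}} + \dim(\text{generic }G_{\mathbb{C}}\text{-orbit through }(x,y)) \ge 2\cdot 2\dim\mathbb{O}_{\operatorname{min},\mathbb{C}}$... i.e. the generic diagonal orbit must have at least half the dimension, which follows from the fact that $\dim \mathbb{C}(\mathbb{O}_{\operatorname{min},\mathbb{C}}\times\mathbb{O}_{\operatorname{min},\mathbb{C}})^{G_{\mathbb{C}}}$ is small — indeed the ring of diagonal invariants is generated by the single function $\langle x, y\rangle$ (the Killing pairing), so the generic orbit has codimension one in each moment-map level set. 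I expect the main obstacle to be the exceptional Lie algebras: while the uniform "coordinate ring $=\bigoplus V(k\theta)^\ast$" description is type-free, verifying the at-most-one-dimensionality of $K_{\mathbb{C}}$-invariants for the non-classical symmetric pairs (those involving $\mathfrak{g}_2, \mathfrak{f}_4, \mathfrak{e}_6, \mathfrak{e}_7, \mathfrak{e}_8$) requires either a case-by-case restricted-root analysis or an appeal to known decomposition results, and this is where I would need to be most careful — in particular to handle correctly the finitely many pairs where $n(\mathfrak{g}) > n(\mathfrak{g}_{\mathbb{C}})$ (e.g. $Sp(p,q)$-type phenomena) so that I am genuinely looking at the minimal complex orbit and not a larger one.
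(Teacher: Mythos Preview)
Your route via Poisson-commutativity of invariants is genuinely different from the paper's, which proceeds by a direct infinitesimal slice computation. The paper exhibits a one-parameter family $S$ such that $H\cdot S$ is open dense (using the open Bruhat cell $N^+_{\mathbb{C}}H_{\mathbb{C}}N^-_{\mathbb{C}}$ for part~1, and the Iwasawa decomposition $G=K\exp(\mathbb{R}A_\mu)G_X$ for part~2), and then verifies at a single base point the linear-algebra condition $(\mathfrak{h}+Z_{\mathfrak g}(\lambda))^{\perp B}\subset[X_\lambda,\mathfrak h]$ by a two-line root-space check. No invariant theory, no Cartan--Helgason, no case division beyond the dichotomy ``$\mathbb{O}_{\min,\mathbb{C}}\cap\mathfrak g_{\mathbb R}$ empty or not''; in the empty case one simply observes that $K_{\mathbb C}$ already has an open orbit in $\mathbb{O}_{\min,\mathbb C}$.

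There is a genuine gap in your Part~2. The equivalence you assert --- ``coisotropic precisely when $\dim\operatorname{Hom}_{K_{\mathbb C}}(\mathbf 1,V(k\theta))\le 1$ for all $k$'' --- is false, and even the implication you need does not follow. Cartan--Helgason does give $\dim V(k\theta)^{K_{\mathbb C}}\le 1$ uniformly (so the Berger case-check you propose is superfluous), hence the \emph{ring} $\mathbb{C}[\overline{\mathbb O_{\min,\mathbb C}}]^{K_{\mathbb C}}$ has Krull dimension $\le 1$. But the Huckleberry--Wurzbacher criterion you invoke concerns the \emph{field} $\mathbb C(\mathbb O_{\min,\mathbb C})^{K_{\mathbb C}}$, whose transcendence degree is the codimension of a generic $K_{\mathbb C}$-orbit. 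When $K_{\mathbb C}$ carries nontrivial characters (e.g.\ $K_{\mathbb C}=S(GL_p\times GL_q)$ or $GL_m$), ratios of $K_{\mathbb C}$-semi-invariants produce invariant rational functions not visible in the invariant ring, so the two quantities can differ. To close this you must show that the generic $K_{\mathbb C}$-orbit in $\mathbb O_{\min,\mathbb C}$ has codimension $\le 1$, equivalently that $K_{\mathbb C}$ has an open orbit on the adjoint variety $G_{\mathbb C}/Q_\theta$. That is exactly what the paper's Iwasawa lemma $G=K\exp(\mathbb R A_\mu)G_X$ establishes, and it is the substantive step; once you have it, the Cartan--Helgason detour is no longer needed. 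Your Part~1 is on firmer ground, since $G_{\mathbb C}$ has no characters and the single invariant $B(x,y)$ genuinely controls the rational invariants, though the passage from ``invariant ring is $\mathbb C[B(x,y)]$'' to ``generic diagonal orbit has codimension one'' still deserves a sentence.
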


\subsection{Generalities: coisotropic actions on coadjoint orbits}
\label{subsec:gencoiso}
We begin with a general setting
 for a {\it{real}} Lie group.  
Suppose that ${\mathbb{O}}$ is a coadjoint orbit
 of a connected Lie group $G$
 through $\lambda \in {\mathfrak{g}}^{\ast}$.  
Denote by $G_{\lambda}$ the stabilizer subgroup of $\lambda$ in $G$, 
 and by $Z_{\mathfrak{g}}(\lambda)$ its Lie algebra.  
Then the Kirillov--Kostant--Souriau symplectic form $\omega$
 on the coadjoint orbit
 ${\mathbb{O}}= \operatorname{Ad}^{\ast}(G) \simeq G/G_{\lambda}$
 is given at the tangent space
 $T_{\lambda} {\mathbb{O}} \simeq {\mathfrak{g}}/Z_{\mathfrak{g}}(\lambda)$
 by
\begin{equation}
\label{eqn:KKS}
\omega \colon 
{\mathfrak{g}}/Z_{\mathfrak{g}}(\lambda) \times {\mathfrak{g}}/Z_{\mathfrak{g}}(\lambda)
\to
{\mathbb{R}}, 
\quad
(X,Y) \mapsto \lambda([X,Y]).  
\end{equation}
Suppose that $H$ is a connected subgroup 
 with Lie algebra ${\mathfrak{h}}$.  
For $\lambda \in {\mathfrak{g}}^{\ast}$, 
 we define a subspace of the Lie algebra ${\mathfrak{g}}$ by 
\begin{equation}
\label{eqn:zhlmd}
Z_{\mathfrak{g}}({\mathfrak{h}};\lambda)
:=
\{Y \in {\mathfrak{g}}: \text{$\lambda([X,Y])=0$
 for all $X \in {\mathfrak{h}}$}
\}.  
\end{equation}
Clearly,
 $Z_{\mathfrak{g}}({\mathfrak{h}};\lambda)$ contains
 the Lie algebra $Z_{\mathfrak{g}}(\lambda)\equiv Z_{\mathfrak{g}}({\mathfrak{g}};\lambda)$
 of $G_{\lambda}$.

We shall use the following:
\begin{lemma}
\label{lem:1.7}
The $H$-action on a coadjoint orbit ${\mathbb{O}}$
 in ${\mathfrak{g}}^{\ast}$
 is coisotropic 
 if there exists a subset $S$ (slice)
 in ${\mathbb{O}}$
 with the following two properties:
\begin{align}
&\text{$\operatorname{Ad}^{\ast}(H) S$ is open dense in ${\mathbb{O}}$}, 
\notag
\\
\label{eqn:coiso}
&\text{$Z_{\mathfrak{g}}({\mathfrak{h}};\lambda)
\subset {\mathfrak{h}}+Z_{\mathfrak{g}}(\lambda)$ 
 for any $\lambda \in S$.}
\end{align}
\end{lemma}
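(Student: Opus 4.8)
The plan is to establish Lemma \ref{lem:1.7} by a direct unwinding of the definition of coisotropic action at a point of the open dense orbit through a slice point $\lambda \in S$. First I would fix $\lambda \in S$ and identify the tangent space $T_\lambda {\mathbb O} \simeq {\mathfrak g}/Z_{\mathfrak g}(\lambda)$ together with the Kirillov--Kostant--Souriau form $\omega$ given by \eqref{eqn:KKS}. Under this identification, the tangent space to the $H$-orbit at $\lambda$ is $T_\lambda(\operatorname{Ad}^{\ast}(H)\lambda) \simeq ({\mathfrak h} + Z_{\mathfrak g}(\lambda))/Z_{\mathfrak g}(\lambda)$, viewed as a subspace $W_\lambda$ of $V_\lambda := {\mathfrak g}/Z_{\mathfrak g}(\lambda)$.

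The heart of the argument is to compute the $\omega$-orthogonal complement $W_\lambda^\perp$ inside $V_\lambda$ and show it is contained in $W_\lambda$. By definition, a class $\bar Y \in {\mathfrak g}/Z_{\mathfrak g}(\lambda)$ lies in $W_\lambda^\perp$ precisely when $\omega(\bar Y, \bar X) = \lambda([Y,X]) = 0$ for all $X \in {\mathfrak h}$ (the condition for $X \in Z_{\mathfrak g}(\lambda)$ being automatic). By the antisymmetry $\lambda([Y,X]) = -\lambda([X,Y])$, this says exactly $Y \in Z_{\mathfrak g}({\mathfrak h};\lambda)$ in the notation of \eqref{eqn:zhlmd}. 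Hence $W_\lambda^\perp = Z_{\mathfrak g}({\mathfrak h};\lambda)/Z_{\mathfrak g}(\lambda)$. The hypothesis \eqref{eqn:coiso} says $Z_{\mathfrak g}({\mathfrak h};\lambda) \subset {\mathfrak h} + Z_{\mathfrak g}(\lambda)$, which upon passing to the quotient by $Z_{\mathfrak g}(\lambda)$ gives $W_\lambda^\perp \subset W_\lambda$; that is, $W_\lambda$ is coisotropic in $T_\lambda {\mathbb O}$.

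Finally I would globalize: the set $U := \operatorname{Ad}^{\ast}(H)S$ is $H$-stable open dense in ${\mathbb O}$ by the first hypothesis, and for an arbitrary point $x = \operatorname{Ad}^{\ast}(h)\lambda \in U$ with $\lambda \in S$, the diffeomorphism $\operatorname{Ad}^{\ast}(h)$ is symplectic and $H$-equivariant, so it carries the coisotropic subspace $T_\lambda(H\cdot\lambda) \subset T_\lambda{\mathbb O}$ to the coisotropic subspace $T_x(H \cdot x) \subset T_x {\mathbb O}$. Thus $T_x(H\cdot x)$ is coisotropic for every $x \in U$, which is exactly the condition in Definition \ref{def:coisotropic}. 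I do not anticipate a genuine obstacle here: the only point requiring minor care is the bookkeeping of which subspace of ${\mathfrak g}$ maps to which subspace of the quotient ${\mathfrak g}/Z_{\mathfrak g}(\lambda)$, and the observation (already recorded after \eqref{eqn:zhlmd}) that $Z_{\mathfrak g}(\lambda) \subset Z_{\mathfrak g}({\mathfrak h};\lambda)$, so that the quotient $Z_{\mathfrak g}({\mathfrak h};\lambda)/Z_{\mathfrak g}(\lambda)$ makes sense and \eqref{eqn:coiso} is the correct containment to pass through the quotient. The real work of the section lies not in this lemma but in verifying, for the minimal nilpotent orbit, that a suitable slice $S$ exists and satisfies \eqref{eqn:coiso} — i.e.\ in the proof of Theorem \ref{thm:22030921}.
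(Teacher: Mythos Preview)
Your proof is correct and follows essentially the same approach as the paper: identify $T_\lambda{\mathbb O}\simeq{\mathfrak g}/Z_{\mathfrak g}(\lambda)$, compute $T_\lambda(H\cdot\lambda)\simeq({\mathfrak h}+Z_{\mathfrak g}(\lambda))/Z_{\mathfrak g}(\lambda)$ and its $\omega$-orthogonal $Z_{\mathfrak g}({\mathfrak h};\lambda)/Z_{\mathfrak g}(\lambda)$, and use \eqref{eqn:coiso} to conclude. The paper compresses your globalization step into the single remark that condition \eqref{eqn:coiso} is $H$-invariant, but the content is identical.
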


\begin{proof}
It suffices to verify
 that $T_{\lambda}(\operatorname{Ad}^{\ast}(H) \lambda)$
 is a coisotropic subspace
 in $T_{\lambda}{\mathbb{O}}$
 for any $\lambda \in S$
 because the condition \eqref{eqn:coiso} is $H$-invariant.  
Via the identification 
 $T_{\lambda} {\mathbb{O}}
 \simeq {\mathfrak{g}}/Z_{\mathfrak{g}}(\lambda)$, 
 one has
$T_{\lambda}(\operatorname{Ad}^{\ast}(H)\lambda)
\simeq
({\mathfrak{h}}+Z_{\mathfrak{g}}(\lambda))/
Z_{\mathfrak{g}}(\lambda)$.  
By the formula \eqref{eqn:KKS} of the symplectic form $\omega$
 on ${\mathbb{O}}$, 
one has 
$
   T_{\lambda}(\operatorname{Ad}^{\ast}(H)\lambda)^{\perp}
   \simeq
   Z_{\mathfrak{g}}({\mathfrak{h}};\lambda)/Z_{\mathfrak{g}}(\lambda)
$.  
Hence 
$
T_{\lambda}(\operatorname{Ad}^{\ast}(H)\lambda)
$ is a coisotropic subspace in $T_{\lambda}{\mathbb{O}}$
 if and only if 
$
   Z_{\mathfrak{g}}({\mathfrak{h}};\lambda)
   \subset
   {\mathfrak{h}}+Z_{\mathfrak{g}}(\lambda)
$, 
 whence the lemma.  
\end{proof}

For semisimple ${\mathfrak{g}}$, 
 the Killing form $B$ induces the following $G$-isomorphism
\begin{equation}
\label{eqn:Xlmd}
{\mathfrak{g}}^{\ast} \simeq {\mathfrak{g}}, 
\quad
 \lambda \mapsto X_{\lambda}.  
\end{equation}
By definition, 
 one has 
$
\lambda([X,Y])=B(X_{\lambda}, [X,Y])=B([X_{\lambda}, X],Y)
$, 
 and thus
\[
   Z_{\mathfrak{g}}({\mathfrak{h}};\lambda)
   = [X_{\lambda}, {\mathfrak{h}}]^{\perp B}, 
\]
 where the right-hand side stands for the orthogonal complement subspace
 of $[X_{\lambda}, {\mathfrak{h}}]:=\{[X_{\lambda}, X]:X \in {\mathfrak{h}}\}$
 in ${\mathfrak{g}}$
 with respect to the Killing form $B$.  
Hence we have the following.  
\begin{lemma}
\label{lem:Hcoiso}
For semisimple ${\mathfrak{g}}$, 
 one may replace 
 the condition \eqref{eqn:coiso}
 in Lemma \ref{lem:1.7}
 by 
\begin{equation}
\label{eqn:coisoB}
({\mathfrak{h}}+Z_{\mathfrak{g}}(\lambda))^{\perp B}
\subset 
[X_{\lambda}, {\mathfrak{h}}]
\quad
\text{for any $\lambda$.  }
\end{equation}
\end{lemma}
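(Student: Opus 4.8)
\textbf{Proof plan for Lemma \ref{lem:Hcoiso}.}
The plan is to show that for semisimple $\mathfrak{g}$ the two displayed conditions \eqref{eqn:coiso} and \eqref{eqn:coisoB} are literally the same statement, once one transports everything through the Killing-form identification \eqref{eqn:Xlmd}. The lemma only asserts that condition \eqref{eqn:coiso} in Lemma \ref{lem:1.7} may be \emph{replaced} by condition \eqref{eqn:coisoB}, so it suffices to verify that \eqref{eqn:coiso} holds if and only if \eqref{eqn:coisoB} holds, for each fixed $\lambda$. I would begin by recording the two identities already derived in the surrounding text: first, from $\lambda([X,Y]) = B(X_\lambda,[X,Y]) = B([X_\lambda,X],Y)$, one reads off directly that $Y \in Z_{\mathfrak{g}}(\mathfrak{h};\lambda)$ iff $B([X_\lambda,X],Y)=0$ for all $X \in \mathfrak{h}$, i.e. iff $Y \perp_B [X_\lambda,\mathfrak{h}]$; hence
\[
  Z_{\mathfrak{g}}(\mathfrak{h};\lambda) = [X_\lambda,\mathfrak{h}]^{\perp B}.
\]
Applying this with $\mathfrak{h} = \mathfrak{g}$ gives in particular $Z_{\mathfrak{g}}(\lambda) = [X_\lambda,\mathfrak{g}]^{\perp B}$.

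The core of the argument is then a formal manipulation of orthogonal complements with respect to the nondegenerate Killing form $B$ (nondegeneracy is exactly where semisimplicity enters). Taking $\perp_B$ of both sides of the inclusion in \eqref{eqn:coiso} reverses the inclusion, so \eqref{eqn:coiso} is equivalent to
\[
  (\mathfrak{h}+Z_{\mathfrak{g}}(\lambda))^{\perp B}
  \subset
  Z_{\mathfrak{g}}(\mathfrak{h};\lambda)^{\perp B}.
\]
Since $\perp_B$ is an involution on subspaces of the finite-dimensional space $\mathfrak{g}$ (again using nondegeneracy of $B$), the first identity above gives $Z_{\mathfrak{g}}(\mathfrak{h};\lambda)^{\perp B} = [X_\lambda,\mathfrak{h}]^{\perp B \perp B} = [X_\lambda,\mathfrak{h}]$. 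Substituting this in yields precisely
\[
  (\mathfrak{h}+Z_{\mathfrak{g}}(\lambda))^{\perp B} \subset [X_\lambda,\mathfrak{h}],
\]
which is \eqref{eqn:coisoB}. Because every step here is an equivalence, not merely an implication, the converse direction comes for free; no extra work is needed to recover \eqref{eqn:coiso} from \eqref{eqn:coisoB}.

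There is no serious obstacle: the only points requiring care are the two standard facts about $\perp_B$ on a finite-dimensional space with a nondegenerate symmetric bilinear form, namely that $A \subset A'$ iff $A'^{\perp B} \subset A^{\perp B}$, and that $A^{\perp B \perp B} = A$. Both hold precisely because $B$ is nondegenerate, which is guaranteed by the semisimplicity hypothesis; this is the sole place the hypothesis is used, and it should be flagged explicitly. I would also note, for rigor, that the quantifier \lq\lq for any $\lambda$\rq\rq\ in \eqref{eqn:coisoB} is understood to range over the slice $S$ exactly as in \eqref{eqn:coiso}, so that the replacement is valid verbatim inside the statement of Lemma \ref{lem:1.7}. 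The proof is therefore a short chain of set-theoretic equivalences built on the two orthogonality identities, and can be written in a few lines.
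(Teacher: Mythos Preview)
Your proposal is correct and follows exactly the approach the paper intends: the identity $Z_{\mathfrak{g}}(\mathfrak{h};\lambda) = [X_\lambda,\mathfrak{h}]^{\perp B}$ is established in the text immediately preceding the lemma, and the paper's ``Hence we have the following'' is precisely the step of taking $\perp_B$ on both sides of \eqref{eqn:coiso} and using that $\perp_B$ is an order-reversing involution on subspaces when $B$ is nondegenerate. You have simply spelled out what the paper leaves implicit, and your remarks on where semisimplicity enters and on the quantifier over $\lambda \in S$ are appropriate refinements.
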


\subsection{Real minimal nilpotent orbits}
\label{subsec:realmin}

Let $G$ be a connected non-compact simple Lie group
 without complex structure.  
Denote by ${\mathcal{N}}$ the nilpotent cone
 in ${\mathfrak{g}}$, 
 and ${\mathcal{N}}/G$ the set of nilpotent orbits, 
 which may be identified with nilpotent coadjoint orbits 
 in ${\mathfrak{g}}^{\ast}$ via \eqref{eqn:Xlmd}.  
The finite set ${\mathcal{N}}/G$ is a poset 
 with respect to the closure ordering, 
 and there are at most two minimal elements
 in $({\mathcal{N}} \setminus \{0\})/G$, 
 which we refer to as {\it{real minimal nilpotent (coadjoint) orbits}}.  
See \cite{B98, KO15, O15} for details.  
The relationship with the complex minimal nilpotent orbits
 ${\mathbb{O}}_{\operatorname{min}, {\mathbb{C}}}$
 in the complexified Lie algebra 
$
   {\mathfrak{g}}_{\mathbb{C}}:= {\mathfrak{g}}\otimes_{\mathbb{R}}{\mathbb{C}}
$
 is given as below.  
Let $K$ be a maximal compact subgroup of $G$ modulo center.

\begin{lemma}
\label{lem:CRmin}
In the setting above, 
 exactly one of the following cases occurs.  
\begin{enumerate}
\item[{\rm{(1)}}]
$({\mathfrak{g}}, {\mathfrak{k}})$ is not of Hermitian type, 
 and ${\mathbb{O}}_{\operatorname{min}, {\mathbb{C}}} \cap {\mathfrak{g}}
=\emptyset$.  

\item[{\rm{(2)}}]
$({\mathfrak{g}}, {\mathfrak{k}})$ is not of Hermitian type, 
 and ${\mathbb{O}}_{\operatorname{min}, {\mathbb{C}}} \cap {\mathfrak{g}}$
 is a single orbit of $G$.  

\item[{\rm{(3)}}]
$({\mathfrak{g}}, {\mathfrak{k}})$ is of Hermitian type, 
 and ${\mathbb{O}}_{\operatorname{min}, {\mathbb{C}}} \cap {\mathfrak{g}}$
 consists of two orbits of $G$.  
\end{enumerate}
\end{lemma}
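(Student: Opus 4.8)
\textbf{Proof proposal for Lemma \ref{lem:CRmin}.}

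The plan is to analyze the intersection ${\mathbb{O}}_{\operatorname{min},{\mathbb{C}}} \cap {\mathfrak{g}}$ by relating it to the structure theory of the Hermitian versus non-Hermitian case. First I would recall the basic fact (due to Vinberg, see \cite{B98}) that a non-zero nilpotent orbit of a simple real Lie algebra ${\mathfrak{g}}$ lies in ${\mathbb{O}}_{\operatorname{min},{\mathbb{C}}}$ if and only if it is a minimal element of $({\mathcal{N}} \setminus \{0\})/G$ with respect to closure ordering and its complexification is the minimal complex orbit; the point is that ${\mathbb{O}}_{\operatorname{min},{\mathbb{C}}}$ is the unique non-zero orbit of minimal dimension, so any $G$-orbit it contains is automatically minimal among real nilpotent orbits. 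Thus the trichotomy is really about how many real forms the minimal complex orbit supports: zero, one, or two.

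The key structural input is the description of ${\mathbb{O}}_{\operatorname{min},{\mathbb{C}}}$ via highest root vectors. Fix a Cartan subalgebra and a root system; the minimal complex orbit is $(\operatorname{Int}{\mathfrak{g}}_{\mathbb{C}})$-orbit through a root vector $E_\beta$ for the highest root $\beta$. An element of ${\mathbb{O}}_{\operatorname{min},{\mathbb{C}}}$ lies in a real form ${\mathfrak{g}}$ precisely when the $\mathfrak{sl}_2$-triple it generates can be conjugated into ${\mathfrak{g}}$. The next step is to invoke the Kostant--Sekiguchi correspondence and the classification of real forms of the minimal complex orbit: one uses that ${\mathbb{O}}_{\operatorname{min},{\mathbb{C}}} \cap {\mathfrak{g}} \ne \emptyset$ exactly when ${\mathfrak{g}}$ contains a nonzero nilpotent $X$ with $(\operatorname{ad}X)^3 = 0$ and the associated semisimple element having the prescribed eigenvalue pattern — equivalently, when ${\mathfrak{g}}$ has a highest-root $\mathfrak{sl}_2$-subalgebra of the correct type. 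For the Hermitian case, I would use that ${\mathfrak{g}}_{\mathbb{C}}$ then has a grading ${\mathfrak{g}}_{\mathbb{C}} = {\mathfrak{p}}_+ \oplus {\mathfrak{k}}_{\mathbb{C}} \oplus {\mathfrak{p}}_-$, and the minimal orbit meets both ${\mathfrak{p}}_+$ and ${\mathfrak{p}}_-$; intersecting with the real form ${\mathfrak{g}}$, whose complexification respects this decomposition, produces two distinct $G$-orbits (the ``holomorphic'' and ``antiholomorphic'' ones), distinguished by which half-space $X_\lambda$ has nonzero component in, and swapped by an outer operation. In the non-Hermitian case, the centralizer structure forces connectedness of the (possibly empty) real points, giving a single orbit when nonempty.

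To organize the three cases cleanly I would: (a) show the intersection, if nonempty, is a finite union of $G$-orbits all of the same dimension $2n({\mathfrak{g}}_{\mathbb{C}})$ — automatic from $G$ acting on the smooth variety ${\mathbb{O}}_{\operatorname{min},{\mathbb{C}}} \cap {\mathfrak{g}}$; (b) count connected components of ${\mathbb{O}}_{\operatorname{min},{\mathbb{C}}}(\mathbb{R})$ using that ${\mathbb{O}}_{\operatorname{min},{\mathbb{C}}}$ is simply connected as a complex variety (it is the cone minus origin over a flag variety, or rather its smooth locus), so real points have at most two components by a Galois-cohomology / real-algebraic-geometry argument; (c) identify the two-component case with the Hermitian condition via the $\mathbb{Z}$-grading and the nonvanishing of the $\mathfrak{z}({\mathfrak{k}})$-eigenvalue. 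The main obstacle I anticipate is step (b)/(c): ruling out more than two $G$-orbits and pinning the dichotomy exactly to the Hermitian condition, rather than to some finer invariant, requires care — one must check case-by-case (or via a uniform argument using the structure of the minimal $\mathfrak{sl}_2$ and its centralizer, which is reductive with an explicit description) that no intermediate possibilities arise. I would lean on the explicit tables in \cite{O15} and \cite{KO15} to confirm the classification rather than attempt a fully uniform proof, since those references already tabulate the real minimal nilpotent orbits for every simple real Lie algebra.
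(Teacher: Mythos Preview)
The paper does not actually prove Lemma~\ref{lem:CRmin}: it is stated as background, with the sentence ``See \cite{B98, KO15, O15} for details'' in the preceding paragraph serving as the only justification. So there is no argument in the paper to compare against beyond a citation.

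Your sketch is therefore more ambitious than the paper's treatment, and in its final paragraph you land in the same place the paper does --- deferring to the tables in \cite{O15} and \cite{KO15}. That said, a couple of the intermediate steps you propose are not right as stated. First, ${\mathbb{O}}_{\operatorname{min},{\mathbb{C}}}$ is not simply connected: its projectivization is a generalized flag variety $G_{\mathbb{C}}/P$, so ${\mathbb{O}}_{\operatorname{min},{\mathbb{C}}}$ is a ${\mathbb{C}}^{\times}$-bundle over a simply connected base and typically has $\pi_1 \simeq {\mathbb{Z}}$; the ``at most two components'' bound for the real points does not follow from simple connectedness in the way you suggest. Second, in the Hermitian case the real form ${\mathfrak{g}}$ does \emph{not} respect the decomposition ${\mathfrak{p}}_+ \oplus {\mathfrak{p}}_-$ (complex conjugation interchanges the two summands), so ``which half-space $X_\lambda$ has nonzero component in'' is not a $G$-invariant of a real nilpotent element. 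What actually distinguishes the two real orbits is the sign structure coming from the action of the center of $K$, or equivalently --- via Kostant--Sekiguchi --- the fact that the corresponding $K_{\mathbb{C}}$-orbits sit in ${\mathfrak{p}}_+$ and ${\mathfrak{p}}_-$ respectively. If you want a genuine uniform argument rather than a table lookup, the cleanest route is through Sekiguchi's correspondence and the count of minimal $K_{\mathbb{C}}$-orbits in ${\mathfrak{p}}_{\mathbb{C}}$, which is exactly one in the non-Hermitian case (when nonempty) and two in the Hermitian case; but since both you and the paper ultimately cite \cite{O15, KO15}, this is optional.
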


As the $G$-orbit decomposition
 of ${\mathbb{O}}_{\operatorname{min}, {\mathbb{C}}} \cap {\mathfrak{g}}$, 
 we write
$
   {\mathbb{O}}_{\operatorname{min}, {\mathbb{C}}} \cap {\mathfrak{g}}
= \{ {\mathbb{O}}_{\operatorname{min}, {\mathbb{R}}}\}
$ in Case (2), 
$
    {\mathbb{O}}_{\operatorname{min}, {\mathbb{C}}} \cap {\mathfrak{g}}
   = \{ {\mathbb{O}}_{\operatorname{min}, {\mathbb{R}}}^+, {\mathbb{O}}_{\operatorname{min}, {\mathbb{R}}}^-\}$ in Case (3).  
Then they exhaust
 all real minimal nilpotent orbits
 in Cases (2) and (3).  
Real minimal nilpotent orbits are unique in Case (1), 
 to be denoted by ${\mathbb{O}}_{\operatorname{min}, {\mathbb{R}}}$.  
We set
\begin{equation}
\label{eqn:ng}
 n({\mathfrak{g}}):=
\begin{cases}
\frac 1 2 \dim {\mathbb{O}}_{\operatorname{min}, {\mathbb{R}}}
\quad
&\text{in Cases (1) and (2)}, 
\\
\frac 1 2 \dim {\mathbb{O}}_{\operatorname{min}, {\mathbb{R}}}^+
=\frac 1 2 \dim {\mathbb{O}}_{\operatorname{min}, {\mathbb{R}}}^-
\quad
&\text{in Case (3)}.  
\end{cases}
\end{equation}
Then $n({\mathfrak{g}})=n({\mathfrak{g}}_{\mathbb{C}})$
 in Cases (2) and (3), 
 and $n({\mathfrak{g}})>n({\mathfrak{g}}_{\mathbb{C}})$ in Case (1).  
The formula of $n({\mathfrak{g}})$ in Case (1) is given in 
 \cite{O15} as follows.  

\begin{figure}[H]%[htb]
\begin{center}
\begin{tabular}{c|ccccc}
%\hline
${\mathfrak{g}}$
& ${\mathfrak{s u}}^{\ast}(2n)$
& ${\mathfrak{s o}}(n-1,1)$
& ${\mathfrak{s p}}(m,n)$
& ${\mathfrak{f}}_{4(-20)}$
& ${\mathfrak{e}}_{6(-26)}$
\\
\hline
$n({\mathfrak{g}})$
& $4n-4$
& $n-2$
& $2(m+n)-1$
& $11$
& $16$
\\
%\hline
\end{tabular}
\end{center}
%\caption{}
%\label{figair}
\end{figure}

For any $\Pi \in \operatorname{Irr}(G)$, 
 the Gelfand--Kirillov dimension 
 $\operatorname{DIM}(\Pi)$ satisfies
 $n({\mathfrak{g}}) \le \operatorname{DIM}(\Pi)$, 
 which is equivalent
 to $n({\mathfrak{g}}_{\mathbb{C}}) \le \operatorname{DIM}(\Pi)$
 in Cases (2) and (3).  
We shall give a brief review
 of several conditions 
 that are equivalent to 
 $n({\mathfrak{g}})> n({\mathfrak{g}}_{\mathbb{C}})$
 in Proposition \ref{prop:OKO}.

We prove the following.  

\begin{theorem}
\label{thm:realmin}
Let 
${\mathbb{O}}%_{\operatorname{min}, {\mathbb{R}}}
$ be
 a real minimal nilpotent coadjoint orbit in ${\mathfrak{g}}^{\ast}$.  
Then the $K$-action on 
${\mathbb{O}}%_{\operatorname{min}, {\mathbb{R}}}
$ is coisotropic.  
\end{theorem}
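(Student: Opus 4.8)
The plan is to reduce the statement to the complex case, namely to Theorem \ref{thm:22030921}(2), via a "unitary trick" comparing the $K$-action on the real nilpotent orbit with the $K_{\mathbb{C}}$-action on the complex minimal orbit. First I would fix a Cartan involution $\theta$ of ${\mathfrak{g}}$ with maximal compactly embedded subalgebra ${\mathfrak{k}}$, and recall from Lemma \ref{lem:CRmin} that any real minimal nilpotent orbit ${\mathbb{O}}$ lies inside ${\mathbb{O}}_{\operatorname{min},{\mathbb{C}}}$ (either ${\mathbb{O}}_{\operatorname{min},{\mathbb{C}}}\cap{\mathfrak{g}}$ is a single $G$-orbit, or it splits into two $G$-orbits in the Hermitian case). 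In all cases ${\mathbb{O}}$ is a real form of ${\mathbb{O}}_{\operatorname{min},{\mathbb{C}}}$ in the sense that $\dim_{\mathbb{R}}{\mathbb{O}} = \dim_{\mathbb{C}}{\mathbb{O}}_{\operatorname{min},{\mathbb{C}}}$, so $2n({\mathfrak{g}}_{\mathbb{C}})$ there; note however that this excerpt's Theorem \ref{thm:realmin} is meant to cover Case (1) as well, so I should not assume ${\mathbb{O}}$ sits inside ${\mathfrak{g}}$ — rather I work directly with the real orbit.

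The key step is to apply the criterion of Lemma \ref{lem:1.7}, or better its semisimple reformulation Lemma \ref{lem:Hcoiso}, to $H = K$ acting on ${\mathbb{O}} \subset {\mathfrak{g}}^{\ast} \simeq {\mathfrak{g}}$. Concretely, for $X_\lambda \in {\mathbb{O}}$ (a nilpotent element) I must produce a slice $S$ with $\operatorname{Ad}(K)S$ open dense in ${\mathbb{O}}$ and verify
\[
  ({\mathfrak{k}}+Z_{\mathfrak{g}}(X_\lambda))^{\perp B} \subset [X_\lambda,{\mathfrak{k}}]
\]
for $\lambda$ in the slice. To build the slice I would invoke the Kostant--Sekiguchi correspondence: attach to the nilpotent $X_\lambda$ an ${\mathfrak{sl}}_2$-triple $(e,h,f)$ adapted to $\theta$, so that (after conjugation) $h \in {\mathfrak{p}}$, $e,f \in {\mathfrak{k}}\oplus i{\mathfrak{p}}$ in ${\mathfrak{g}}_{\mathbb{C}}$, and the orbit ${\mathbb{O}}$ is $K_{\mathbb{C}}$-equivariantly (via the correspondence) modeled on a $K_{\mathbb{C}}$-orbit in ${\mathfrak{p}}_{\mathbb{C}}$. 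The coisotropy of the $K$-action on ${\mathbb{O}}$ should then be transported from the coisotropy of the $K_{\mathbb{C}}$-action on the corresponding complex orbit, which is exactly the content of Theorem \ref{thm:22030921}(2): the dimension count and the slice there descend, because the real points of a coisotropic complex Hamiltonian action inherit the property (the complexification of the tangent space to the $K$-orbit is the tangent space to the $K_{\mathbb{C}}$-orbit, and $B$-orthogonality is compatible with complexification). Thus the condition \eqref{eqn:coisoB} for $K \curvearrowright {\mathbb{O}}$ follows from the already-established condition \eqref{eqn:coisoB} for $K_{\mathbb{C}} \curvearrowright {\mathbb{O}}_{\operatorname{min},{\mathbb{C}}}$ by restricting scalars.

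The main obstacle I anticipate is the passage from the complex statement to the real one when the real orbit is \emph{not} obtained simply as real points of the complex orbit — precisely Case (1) of Lemma \ref{lem:CRmin}, where ${\mathbb{O}}_{\operatorname{min},{\mathbb{C}}}\cap{\mathfrak{g}}=\emptyset$ and $n({\mathfrak{g}})>n({\mathfrak{g}}_{\mathbb{C}})$ (the algebras ${\mathfrak{su}}^*(2n)$, ${\mathfrak{so}}(n-1,1)$, ${\mathfrak{sp}}(m,n)$, ${\mathfrak{f}}_{4(-20)}$, ${\mathfrak{e}}_{6(-26)}$). There the real minimal orbit has strictly larger dimension and the naive complexification argument does not apply; instead I would handle these finitely many cases by the Kostant--Sekiguchi model directly — realize ${\mathbb{O}}$ as a $K_{\mathbb{C}}$-orbit in ${\mathfrak{p}}_{\mathbb{C}}$, identify it with the unique (or appropriate) nilpotent $K_{\mathbb{C}}$-orbit of the right dimension, and either quote the classification of coisotropic (equivalently, multiplicity-free) $K_{\mathbb{C}}$-actions on such orbits (cf. the references \cite{huwu90} and the multiplicity-free literature) or verify \eqref{eqn:coiso} by an explicit computation of $Z_{\mathfrak{g}}({\mathfrak{k}};\lambda)$ at a well-chosen $\lambda$, using that ${\mathbb{O}}$ is spherical as a $K_{\mathbb{C}}$-variety. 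A case-by-case check over the short list above, combined with the uniform argument in Cases (2) and (3), completes the proof.
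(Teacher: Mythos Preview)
Your proposal has a genuine circularity problem. You plan to deduce Theorem~\ref{thm:realmin} from Theorem~\ref{thm:22030921}(2), but in the paper the logical flow runs the other way: Theorem~\ref{thm:realmin} is proved directly, its argument is then complexified verbatim to give Theorem~\ref{thm:22030921b}, and \emph{that} is what establishes Theorem~\ref{thm:22030921}(2) in Cases~(2) and~(3) of Lemma~\ref{lem:CRmin}. So invoking Theorem~\ref{thm:22030921}(2) here assumes precisely what you are asked to prove. Your Kostant--Sekiguchi step is also off target: the correspondence sends real nilpotent $G$-orbits to $K_{\mathbb{C}}$-orbits in ${\mathfrak{p}}_{\mathbb{C}}$, which are not coadjoint orbits of $G_{\mathbb{C}}$ and carry no obvious symplectic structure compatible with the one on ${\mathbb{O}}$; it does not give the transport of coisotropy you need. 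Finally, your treatment of Case~(1) (``case-by-case check over the short list'') is a placeholder rather than an argument.

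The paper's proof is far more direct and uniform, with no case split. One picks a nonzero highest-restricted-root vector $X\in{\mathfrak{g}}({\mathfrak{a}};\mu)$ so that ${\mathbb{O}}=\operatorname{Ad}(G)X$, and uses the Iwasawa decomposition to obtain the one-parameter slice $S=\operatorname{Ad}(\exp{\mathbb{R}}A_\mu)X$ with $\operatorname{Ad}(K)S={\mathbb{O}}$ (Lemma~\ref{lem:KAN}). Since every element of $S$ is a positive scalar multiple of $X$, condition~\eqref{eqn:coisoB} need only be checked at $X$ itself. From $Z_{\mathfrak{g}}(X)\supset{\mathfrak{a}}^{\perp\mu}\oplus{\mathfrak{n}}$ one gets ${\mathfrak{k}}+Z_{\mathfrak{g}}(X)\supset\theta{\mathfrak{n}}\oplus{\mathfrak{m}}\oplus{\mathfrak{a}}^{\perp\mu}\oplus{\mathfrak{n}}$, hence $({\mathfrak{k}}+Z_{\mathfrak{g}}(X))^{\perp B}\subset{\mathbb{R}}A_\mu$. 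But $(A_\mu,X,c'\theta X)$ is an ${\mathfrak{sl}}_2$-triple, so $A_\mu\in[X,{\mathfrak{k}}]$, and \eqref{eqn:coisoB} follows. This argument uses only the restricted root data and covers all real minimal orbits at once; no appeal to the complex theorem, Kostant--Sekiguchi, or sphericity classifications is required.
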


For the proof, 
 we recall some basic facts 
 on real minimal nilpotent orbits.

Let ${\mathfrak{g}}={\mathfrak{k}}+{\mathfrak{p}}$
 be the Cartan decomposition, 
 and $\theta$ the corresponding Cartan involution. 
We take a maximal abelian subspace ${\mathfrak{a}}$
 of ${\mathfrak{p}}$, 
 and fix a positive system $\Sigma^+({\mathfrak{g}}, {\mathfrak{a}})$
 of the restricted root system $\Sigma({\mathfrak{g}}, {\mathfrak{a}})$.  
We denote by $\mu$
 the highest element 
 in $\Sigma^+({\mathfrak{g}}, {\mathfrak{a}})$, 
 and $A_{\mu} \in {\mathfrak{a}}$
 the coroot of $\mu$.  
It is known ({\it{e.g.}}, \cite{O15})
 that any minimal nilpotent coadjoint orbit 
${\mathbb{O}}%_{\operatorname{min},{\mathbb{R}}}
$
 is of the form 
 ${\mathbb{O}}%_{\operatorname{min},{\mathbb{R}}}
=\operatorname{Ad}(G)X$
 via the identification 
 ${\mathfrak{g}}^{\ast} \simeq {\mathfrak{g}}$
 for some non-zero element 
$
   X \in {\mathfrak{g}}({\mathfrak{a}};\mu)
   :=
\{X \in {\mathfrak{g}}
:
[H,X]=\mu(H) X
\text{ for all $H \in {\mathfrak{a}}$}
\}
$.  
Let $G_X$ be the stabilizer subgroup 
 of $X$ in $G$.  
Then one has the decomposition:

\begin{lemma}
\label{lem:KAN}
$G=K \exp({\mathbb{R}}A_{\mu}) G_X$.  
\end{lemma}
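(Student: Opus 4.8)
The goal is Lemma~\ref{lem:KAN}, the decomposition $G=K\exp(\mathbb{R}A_\mu)G_X$. The natural strategy is to reduce this to a statement about the restricted root space $\mathfrak{g}(\mathfrak{a};\mu)$ and the action of $K$ on the nilpotent orbit. Since $\mathbb{O}=\operatorname{Ad}(G)X$ and $X\in\mathfrak{g}(\mathfrak{a};\mu)$ with $\mu$ the highest restricted root, the first observation I would record is that $\mathfrak{g}(\mathfrak{a};\mu)$ is contained in the nilradical $\mathfrak{n}$ of the minimal parabolic $\mathfrak{p}_{\min}=\mathfrak{m}+\mathfrak{a}+\mathfrak{n}$, and in fact (because $\mu$ is highest) $[\mathfrak{n},\mathfrak{g}(\mathfrak{a};\mu)]=0$, so $\mathfrak{g}(\mathfrak{a};\mu)$ is an abelian ideal-like piece at the top. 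Consequently $N$ fixes the line $\mathbb{R}X$ up to scalars — more precisely $\operatorname{Ad}(n)X\in\mathfrak{g}(\mathfrak{a};\mu)$ for $n\in N$ — and $\operatorname{Ad}(a)X=e^{\mu(\log a)}X$ for $a\in A$. Together these say that the minimal parabolic $P_{\min}=MAN$ acts on $\mathbb{R}_{>0}X\subset\mathbb{O}$ transitively modulo the finite-index issues coming from $M$ and signs, i.e. $P_{\min}\cdot X\subset \mathfrak{g}(\mathfrak{a};\mu)\setminus\{0\}$, and the orbit map $A\to\mathbb{R}_{>0}$, $a\mapsto e^{\mu(\log a)}$, only sees the one-dimensional subspace $\mathbb{R}A_\mu\subset\mathfrak{a}$ on which $\mu$ is nonzero.

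**Main steps.** First, use the Iwasawa decomposition $G=KAN=K\exp(\mathfrak{a})N$. Given $g\in G$, write $g=k\,a\,n$ with $k\in K$, $a\in A$, $n\in N$. Second, analyze how $an$ moves $X$: since $\operatorname{Ad}(n)X=X$ (by $[\mathfrak{n},\mathfrak{g}(\mathfrak{a};\mu)]=0$, so actually $\exp(\operatorname{ad}Y)X=X$ for $Y\in\mathfrak{n}$) — wait, one must be careful: $[\mathfrak{n},\mathfrak{g}(\mathfrak{a};\mu)]=0$ gives $\operatorname{Ad}(n)X=X$ exactly. Hence $\operatorname{Ad}(an)X=\operatorname{Ad}(a)X=e^{\mu(\log a)}X$. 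Third, decompose $\mathfrak{a}=\mathbb{R}A_\mu\oplus\ker\mu$; writing $\log a=tA_\mu+H_0$ with $H_0\in\ker\mu$, we get $\operatorname{Ad}(a)X=e^{t\mu(A_\mu)}X$, so $\exp(H_0)\in G_X$. Thus $a=\exp(tA_\mu)\exp(H_0)$ with $\exp(H_0)\in G_X$, and $n\in G_X$ too (as $N$ fixes $X$), giving $g=k\exp(tA_\mu)(\exp(H_0)\,n)\in K\exp(\mathbb{R}A_\mu)G_X$. The only remaining point is to confirm that $N\subset G_X$ and $\exp(\ker\mu\cap\mathfrak{a})\subset G_X$, both of which follow from the bracket relations above; one should also check $M$ normalizes things appropriately but $M\subset K$ so it is absorbed into the $K$ factor without needing it in $G_X$.

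**The obstacle.** The genuinely delicate point is the claim $\operatorname{Ad}(n)X=X$ for all $n\in N$, equivalently $[\mathfrak{n},\mathfrak{g}(\mathfrak{a};\mu)]=0$. This is true because $\mu$ is the \emph{highest} restricted root: for any positive root $\alpha$, $\mu+\alpha$ is not a root, so $[\mathfrak{g}(\mathfrak{a};\alpha),\mathfrak{g}(\mathfrak{a};\mu)]\subset\mathfrak{g}(\mathfrak{a};\mu+\alpha)=0$. One must take care that "highest" here means highest in the natural partial order coming from $\Sigma^+$, and that when the restricted root system is nonreduced (type $BC$), $2\mu$ being a root does not interfere — but $2\mu$ would then be even higher, contradicting maximality of $\mu$, so in the $BC$ case $\mu$ is automatically the longer root and $\mathfrak{g}(\mathfrak{a};2\mu)=0$ is not the issue; rather one checks $\mathfrak{g}(\mathfrak{a};\mu)$ is still annihilated by all of $\mathfrak{n}$ since $\mu+\alpha$ exceeds $\mu$ for every $\alpha\in\Sigma^+$. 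I would cite \cite{O15} for the structural facts about $\mathfrak{g}(\mathfrak{a};\mu)$ and $G_X$ that are needed, and present the Iwasawa-based argument above as the core of the proof; everything else is bookkeeping with the root space decomposition.
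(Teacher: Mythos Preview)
Your argument is correct and is essentially the same as the paper's proof: both use the Iwasawa decomposition $G=KAN$, the observation that $\mu$ being the highest restricted root forces $[\mathfrak{n},\mathfrak{g}(\mathfrak{a};\mu)]=0$ (hence $N\subset G_X$), and the splitting $\mathfrak{a}=\mathbb{R}A_\mu\oplus\mathfrak{a}^{\perp\mu}$ with $\exp(\mathfrak{a}^{\perp\mu})\subset G_X$. The paper phrases this more tersely by simply noting $Z_{\mathfrak{g}}(X)\supset\mathfrak{a}^{\perp\mu}\oplus\mathfrak{n}$ and concluding directly, but the content is identical; your discussion of the $BC$ case is unnecessary since by definition the highest root $\mu$ admits no $\mu+\alpha\in\Sigma$ with $\alpha\in\Sigma^+$.
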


\begin{proof}
We set 
$
   {\mathfrak{a}}^{\perp \mu}
:=
\{H \in {\mathfrak{a}}: \mu(H)=0\}$, 
$
{\mathfrak{n}}=
\bigoplus_{\nu \in \Sigma^+({\mathfrak{g}},{\mathfrak{a}})}
{\mathfrak{g}}({\mathfrak{a}};\nu)$, 
 and 
${\mathfrak{m}}:=Z_{\mathfrak{k}}({\mathfrak{a}})$, 
 the centralizer of ${\mathfrak{a}}$ in ${\mathfrak{k}}$.  
We note that ${\mathfrak{a}}= {\mathbb{R}}A_{\mu} \oplus {\mathfrak{a}}^{\perp \mu}$ is the orthogonal direct sum decomposition
 with respect to the Killing form.

Since $\mu$ is the highest element
 in $\Sigma^+({\mathfrak{g}},{\mathfrak{a}})$, 
 the Lie algebra $Z_{\mathfrak{g}}(X)$
 of $G_X$
 contains ${\mathfrak{a}}^{\perp \mu} \oplus {\mathfrak{n}}$.  
In particular,
 $G_X$ contains the subgroup $\exp({\mathfrak{a}}^{\perp \mu})N$.  
Since $A=\exp({\mathbb{R}}A_{\mu})\exp({\mathfrak{a}}^{\perp \mu})$, 
 the Iwasawa decomposition $G=K A N$ implies
 $G=K \exp ({\mathbb{R}}A_{\mu}) G_X$.  
\end{proof}

\begin{proof}[Proof of Theorem \ref{thm:realmin}]
Retain the above notation and convention.  
In particular, 
 we write as ${\mathbb{O}}=\operatorname{Ad}^{\ast}(G)X$.  
By Lemmas \ref{lem:Hcoiso} and \ref{lem:KAN}, 
 it suffices to verify
\begin{equation}
\label{eqn:coisonu}
({\mathfrak{k}} + Z_{\mathfrak{g}}(X'))^{\perp}
 \subset [X', {\mathfrak{k}}]
\quad
\text{for any $X' \in \operatorname{Ad}(\exp {\mathbb{R}}A_{\mu})X$.  }
\end{equation}
Since $X \in {\mathfrak{g}}({\mathfrak{a}};\mu)$, 
 any
$
   X' \in \operatorname{Ad}(\exp {\mathbb{R}}A_{\beta})X
$
 is of the form $X'=c X$
 for some $c>0$.  
Thus it is enough to show \eqref{eqn:coisonu}
 when $X'=X$.  
Since $Z_{\mathfrak{g}}(X) \supset {\mathfrak{a}}^{\perp\mu} \oplus {\mathfrak{n}}$, 
 one has ${\mathfrak{k}} + Z_{\mathfrak{g}}(X)\supset \theta {\mathfrak{n}} \oplus 
{\mathfrak{a}}^{\perp \mu} \oplus {\mathfrak{m}} \oplus {\mathfrak{n}}$, 
 hence 
 $({\mathfrak{k}} + Z_{\mathfrak{g}}(X))^{\perp \mu} \subset {\mathbb{R}}A_{\mu}$.  
In view that $(A_{\mu}, X, c' \theta X)$ forms
 an ${\mathfrak{sl}}_2({\mathbb{R}})$-triple for some $c'\in {\mathbb{R}}$, 
 one has $A_{\mu} \in [X, {\mathfrak{k}}]$.  
Thus \eqref{eqn:coisonu} is verified for $X'=X$.  
Hence the $K$-action
 on ${\mathbb{O}}%_{\operatorname{min},{\mathbb{R}}}
$
 is coisotropic by Lemma \ref{lem:1.7}.  
\end{proof}

\subsection{Complex minimal nilpotent orbit}
\label{subsec:cpxmin}
In this section 
 we give a proof of Theorem \ref{thm:22030921}.

Suppose that $G_{\mathbb{C}}$ is a connected complex simple Lie group.  
We take a Cartan subalgebra ${\mathfrak{h}}_{\mathbb{C}}$
 of the Lie algebra ${\mathfrak{g}}_{\mathbb{C}}$ of $G_{\mathbb{C}}$, 
 choose a positive system
 $\Delta^+({\mathfrak{g}}_{\mathbb{C}}, {\mathfrak{h}}_{\mathbb{C}})$, 
 and set 
$
   {\mathfrak{n}}_{\mathbb{C}}^+:=
   \bigoplus_{\alpha \in \Delta^+({\mathfrak{g}}_{\mathbb{C}},{\mathfrak{h}}_{\mathbb{C}})}
  {\mathfrak{g}}_{\mathbb{C}}({\mathfrak{h}}_{\mathbb{C}};\alpha)$, 
$
   {\mathfrak{n}}_{\mathbb{C}}^-:=
   \bigoplus_{\alpha \in \Delta^+({\mathfrak{g}}_{\mathbb{C}},{\mathfrak{h}}_{\mathbb{C}})}
  {\mathfrak{g}}_{\mathbb{C}}({\mathfrak{h}}_{\mathbb{C}};-\alpha)$.  
Let $\beta$ be the highest root 
in $\Delta^+({\mathfrak{g}}_{\mathbb{C}}, {\mathfrak{h}}_{\mathbb{C}})$, 
 and $H_{\beta} \in {\mathfrak{h}}_{\mathbb{C}}$
 the coroot of $\beta$.  
Then one has the direct sum decomposition
 ${\mathfrak{h}}_{\mathbb{C}}={\mathbb{C}}H_{\beta} \oplus {\mathfrak{h}}_{\mathbb{C}}^{\perp \beta}$
 where ${\mathfrak{h}}_{\mathbb{C}}^{\perp \beta}:=
\{H \in {\mathfrak{h}}_{\mathbb{C}}
:
\beta(H)=0\}$.  
The minimal nilpotent coadjoint orbit
 ${\mathbb{O}}_{\operatorname{min}, {\mathbb{C}}}$
 is of the form 
$
   {\mathbb{O}}_{\operatorname{min}, {\mathbb{C}}}
   =
   \operatorname{Ad}(G_{\mathbb{C}})X
\simeq
G_{\mathbb{C}}/(G_{\mathbb{C}})_X
$
 for any non-zero $X \in {\mathfrak{g}}({\mathfrak{h}}_{\mathbb{C}};\beta)$
 via the identification ${\mathfrak{g}}_{\mathbb{C}}^{\ast} \simeq {\mathfrak{g}}_{\mathbb{C}}$.  
One can also write as 
 ${\mathbb{O}}_{\operatorname{min}, {\mathbb{C}}} = \operatorname{Ad}(G_{\mathbb{C}})Y \simeq G_{\mathbb{C}}/(G_{\mathbb{C}})_Y$
 for any non-zero $Y \in {\mathfrak{g}}({\mathfrak{h}}_{\mathbb{C}};-\beta)$.

By an elementary representation theory of ${\mathfrak{s l}}_2$, 
 one sees ({\it{e.g.,}} \cite{C93})
 that the Lie algebras $Z_{\mathfrak{g}_{\mathbb{C}}}(X)$
 and  $Z_{\mathfrak{g}_{\mathbb{C}}}(Y)$
 of the isotropy subgroups 
$(G_{\mathbb{C}})_{X}$ and $(G_{\mathbb{C}})_{Y}$
 are given respectively by
\begin{align}
\label{eqn:Zglmd}
Z_{\mathfrak{g}_{\mathbb{C}}}(X)
=&
\bigoplus_
{\substack{\alpha \in \Delta^+({\mathfrak{g}}_{\mathbb{C}}, {\mathfrak{h}}_{\mathbb{C}}) \\ \alpha \perp \beta}}
{\mathfrak{g}}_{\mathbb{C}}({\mathfrak{h}}_{\mathbb{C}};-\alpha)
 \oplus {\mathfrak{h}}_{\mathbb{C}}^{\perp \beta} 
\oplus {\mathfrak{n}}_{\mathbb{C}}^+, 
\\
\notag
Z_{\mathfrak{g}_{\mathbb{C}}}(Y)
=&
{\mathfrak{n}}_{\mathbb{C}}^-
\oplus
{\mathfrak{h}}_{\mathbb{C}}^{\perp\beta}
\oplus
\bigoplus_{\substack{\alpha \in \Delta^+({\mathfrak{g}}_{\mathbb{C}}, {\mathfrak{h}}_{\mathbb{C}}) \\ \alpha \perp \beta}}
{\mathfrak{g}}_{\mathbb{C}}({\mathfrak{h}}_{\mathbb{C}};\alpha).  
\end{align}

\begin{proof}[Proof of Theorem \ref{thm:22030921} (1)]

We set $S:=\exp {\mathbb{C}}(H_{\beta}, -H_{\beta}) \cdot (X,Y)$
 in ${\mathbb{O}}_{\operatorname{min}, {\mathbb{C}}} \times {\mathbb{O}}_{\operatorname{min}, {\mathbb{C}}}$.  
We claim 
 that $\operatorname{diag}(G_{\mathbb{C}}) S$ is open dense
 in ${\mathbb{O}}_{\operatorname{min}, {\mathbb{C}}} \times {\mathbb{O}}_{\operatorname{min}, {\mathbb{C}}}$.  
To see this, 
 we observe 
 that $(G_{\mathbb{C}})_X \exp ({\mathbb{C}}H_{\beta})(G_{\mathbb{C}})_Y$
 contains the open Bruhat cell
 $N_{\mathbb{C}}^+ H_{\mathbb{C}} N_{\mathbb{C}}^-
 =N_{\mathbb{C}}^+ \exp({\mathfrak{h}}_{\mathbb{C}}^{\perp\beta})
\exp ({\mathbb{C}}H_{\beta}) N_{\mathbb{C}}^-$
 in $G_{\mathbb{C}}$
 as is seen from \eqref{eqn:Zglmd}, 
 and thus
$
\operatorname{diag}(G_{\mathbb{C}}) \exp {\mathbb{C}}(H_{\beta}, 0) 
((G_{\mathbb{C}})_X \times (G_{\mathbb{C}})_{Y})
$
 is open dense in the direct product group $G_{\mathbb{C}} \times G_{\mathbb{C}}$
 via the identification 
 $\operatorname{diag} (G_{\mathbb{C}}) \backslash (G_{\mathbb{C}} \times G_{\mathbb{C}}) \simeq  G_{\mathbb{C}}$, 
 $(x,y) \mapsto x^{-1} y$.

By Lemma \ref{lem:Hcoiso}, 
 Theorem \ref{thm:22030921} (1) will follow
 if we show
\begin{equation}
\label{eqn:ZZdiag}
   (\operatorname{diag}({\mathfrak{g}}_{\mathbb{C}})
    +
    Z_{{\mathfrak{g}}_{\mathbb{C}}\oplus {\mathfrak{g}}_{\mathbb{C}}}
    (\operatorname{Ad}(a)X,\operatorname{Ad}(a)^{-1}Y))^{\perp B}
   \subset
   [(\operatorname{Ad}(a)X,\operatorname{Ad}(a)^{-1}Y), \operatorname{diag}({\mathfrak{g}}_{\mathbb{C}})]\end{equation}
for any $a \in \exp ({\mathbb{C}} H_{\beta})$.  
Since $\operatorname{Ad}(a)X=cX$
 and $\operatorname{Ad}(a)^{-1}Y=c^{-1}Y$
 for some $c \in {\mathbb{C}}^{\times}$, 
 and since $X$ and $Y$ are arbitrary non-zero elements
 in ${\mathfrak{g}}_{\mathbb{C}}({\mathfrak{h}}_{\mathbb{C}};\beta)$
 and ${\mathfrak{g}}_{\mathbb{C}}({\mathfrak{h}}_{\mathbb{C}};-\beta)$, 
 respectively, 
 it suffices to verify \eqref{eqn:ZZdiag} for $a=e$.  
By \eqref{eqn:Zglmd}, 
 one has 
\[
   (\operatorname{diag}({\mathfrak{g}}_{\mathbb{C}})
    +
    (Z_{{\mathfrak{g}}_{\mathbb{C}}} (X) \oplus Z_{{\mathfrak{g}}_{\mathbb{C}}} (Y)))^{\perp B}
={\mathbb{C}}(H_{\beta},-H_{\beta}).  
\]
Since $[X,Y]=c' H_{\beta}$ 
 for some $c' \in {\mathbb{C}}^{\times}$, 
 one has $[(X,Y), (X+Y, X+Y)]= c' (H_{\beta},-H_{\beta})$, 
 showing $(H_{\beta},-H_{\beta}) \in [(X,Y),\operatorname{diag}({\mathfrak{g}}_{\mathbb{C}})]$.  
Thus Theorem \ref{thm:22030921} (1) is proved.  
\end{proof}

Next, 
 we consider the setting in Theorem \ref{thm:22030921} (2).  
Let $(G_{\mathbb{C}},K_{\mathbb{C}})$ be a symmetric pair
 defined by a holomorphic involutive automorphism $\theta$ of $G_{\mathbb{C}}$.  
Then there is a real form ${\mathfrak{g}}_{\mathbb{R}}$
 of the Lie algebra ${\mathfrak{g}}_{\mathbb{C}}$ of $G_{\mathbb{C}}$
 such that $\theta|_{{\mathfrak{g}}_{\mathbb{R}}}$
 defines the Cartan decomposition
 ${\mathfrak{g}}_{\mathbb{R}}={\mathfrak{k}}_{\mathbb{R}}+{\mathfrak{p}}
_{\mathbb{R}}$
 of the real simple Lie algebra ${\mathfrak{g}}_{\mathbb{R}}$
 with ${\mathfrak{k}}_{\mathbb{R}} \otimes_{\mathbb{R}}{\mathbb{C}}$
 being the Lie algebra ${\mathfrak{k}}_{\mathbb{C}}$
 of $K_{\mathbb{C}}$.  
We denote by $G_{\mathbb{R}}$ 
 the analytic subgroup of $G_{\mathbb{C}}$
 with Lie algebra ${\mathfrak{g}}_{\mathbb{R}}$.

We take a maximal abelian subspace ${\mathfrak{a}}_{\mathbb{R}}$
 in ${\mathfrak{p}}_{\mathbb{R}}$, 
 and apply the results of Section \ref{subsec:realmin}
 by replacing the notation
 ${\mathfrak{g}}$, ${\mathfrak{k}}$, ${\mathfrak{p}}$, ${\mathfrak{a}}$, 
 $\cdots$
 with 
  ${\mathfrak{g}}_{\mathbb{R}}$, ${\mathfrak{k}}_{\mathbb{R}}$, ${\mathfrak{p}}_{\mathbb{R}}$, ${\mathfrak{a}}_{\mathbb{R}}$, etc.

Let ${\mathcal{N}}_{\mathbb{C}}$ be the nilpotent cone 
 in ${\mathfrak{g}}_{\mathbb{C}}$, 
 and ${\mathcal{N}}_{\mathbb{R}, \mathbb{C}}:=
\{X \in {\mathcal{N}}_{\mathbb{C}}:
\operatorname{Ad}(G_{\mathbb{C}})X \cap {\mathfrak{g}}_{\mathbb{R}} \ne \emptyset\}$.  
Then there exists a unique $G_{\mathbb{C}}$-orbit, 
 to be denoted by 
${\mathbb{O}}_{\operatorname{min}, {\mathbb{R}}}^{\mathbb{C}}$, 
 which is minimal
 in $({\mathcal{N}}_{\mathbb{R}, \mathbb{C}} \setminus \{0\})/G_{\mathbb{C}}$
 with respect to the closure relation, 
 and 
 ${\mathbb{O}}_{\operatorname{min}, {\mathbb{R}}}^{\mathbb{C}}=\operatorname{Ad}(G_{\mathbb{C}})X$
 for any non-zero $X \in {\mathfrak{g}}_{\mathbb{R}}({\mathfrak{a}}_{\mathbb{R}};\beta)$
 (\cite{O15}).

We extend ${\mathfrak{a}}_{\mathbb{R}}$
 to a maximally split Cartan subalgebra
 ${\mathfrak{h}}_{\mathbb{R}}={\mathfrak{t}}_{\mathbb{R}}
+{\mathfrak{a}}_{\mathbb{R}}$
 of ${\mathfrak{g}}_{\mathbb{R}}$
 where ${\mathfrak{t}}_{\mathbb{R}}:={\mathfrak{h}}_{\mathbb{R}}\cap 
{\mathfrak{k}}_{\mathbb{R}}
$, 
 write ${\mathfrak{h}}_{\mathbb{C}}={\mathfrak{t}}_{\mathbb{C}} + {\mathfrak{a}}_{\mathbb{C}}$
 for the complexification, 
 and take a positive system 
 $\Delta^+({\mathfrak{g}}_{\mathbb{C}}, {\mathfrak{h}}_{\mathbb{C}})$
 which is compatible
 with $\Sigma^+({\mathfrak{g}}_{\mathbb{R}}, {\mathfrak{a}}_{\mathbb{R}})$.

The proof of Theorem \ref{thm:realmin} shows
 its complexified version as follows.  

\begin{theorem}
\label{thm:22030921b}
The action of $K_{\mathbb{C}}$ on ${\mathbb{O}}_{\operatorname{min}, {\mathbb{R}}}^{\mathbb{C}}$
 is coisotropic.  
\end{theorem}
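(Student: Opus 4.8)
The plan is to mirror the argument for Theorem \ref{thm:realmin} almost verbatim, replacing the real compact group $K$ by its complexification $K_{\mathbb{C}}$ and the real orbit $\mathbb{O}$ by $\mathbb{O}_{\operatorname{min},\mathbb{R}}^{\mathbb{C}}$, while keeping the reference element $X$ in the real restricted root space $\mathfrak{g}_{\mathbb{R}}(\mathfrak{a}_{\mathbb{R}};\mu)$, where $\mu$ is the highest restricted root. By Lemma \ref{lem:Hcoiso} applied to the semisimple complex Lie algebra $\mathfrak{g}_{\mathbb{C}}$ and the subgroup $H=K_{\mathbb{C}}$, it suffices to exhibit a slice $S$ in $\mathbb{O}_{\operatorname{min},\mathbb{R}}^{\mathbb{C}}$ with $\operatorname{Ad}^{\ast}(K_{\mathbb{C}})S$ open dense, together with the inclusion $(\mathfrak{k}_{\mathbb{C}}+Z_{\mathfrak{g}_{\mathbb{C}}}(X'))^{\perp B}\subset[X',\mathfrak{k}_{\mathbb{C}}]$ for all $X'\in S$. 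The natural candidate for the slice is $S:=\operatorname{Ad}(\exp(\mathbb{C}A_{\mu}))X$, the complexification of the real slice used in Theorem \ref{thm:realmin}.

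First I would establish the decomposition $G_{\mathbb{C}}=K_{\mathbb{C}}\exp(\mathbb{C}A_{\mu})(G_{\mathbb{C}})_X$, the complex analogue of Lemma \ref{lem:KAN}. This should follow by complexifying the proof of that lemma: the Lie algebra $Z_{\mathfrak{g}_{\mathbb{R}}}(X)$ contains $\mathfrak{a}_{\mathbb{R}}^{\perp\mu}\oplus\mathfrak{n}_{\mathbb{R}}$ because $\mu$ is the highest restricted root, so $Z_{\mathfrak{g}_{\mathbb{C}}}(X)$ contains $\mathfrak{a}_{\mathbb{C}}^{\perp\mu}\oplus\mathfrak{n}_{\mathbb{C}}$ (using the compatibility of $\Delta^{+}(\mathfrak{g}_{\mathbb{C}},\mathfrak{h}_{\mathbb{C}})$ with $\Sigma^{+}(\mathfrak{g}_{\mathbb{R}},\mathfrak{a}_{\mathbb{R}})$), and then the complexified Iwasawa-type decomposition $G_{\mathbb{C}}=K_{\mathbb{C}}A_{\mathbb{C}}N_{\mathbb{C}}$ together with $A_{\mathbb{C}}=\exp(\mathbb{C}A_{\mu})\exp(\mathfrak{a}_{\mathbb{C}}^{\perp\mu})$ gives the claim. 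This yields that $\operatorname{Ad}(K_{\mathbb{C}})S$ is dense in $\mathbb{O}_{\operatorname{min},\mathbb{R}}^{\mathbb{C}}$; openness follows from a dimension count, or from the observation that the image of the orbit map has full tangent space at a generic point since we verify the coisotropy (hence in particular non-degeneracy) condition there.

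Next I would check the algebraic inclusion. Because $X\in\mathfrak{g}_{\mathbb{R}}(\mathfrak{a}_{\mathbb{R}};\mu)$, every $X'\in S$ has the form $X'=cX$ with $c\in\mathbb{C}^{\times}$, so $Z_{\mathfrak{g}_{\mathbb{C}}}(X')=Z_{\mathfrak{g}_{\mathbb{C}}}(X)$ and $[X',\mathfrak{k}_{\mathbb{C}}]=[X,\mathfrak{k}_{\mathbb{C}}]$, and it is enough to treat $X'=X$. Since $Z_{\mathfrak{g}_{\mathbb{C}}}(X)\supset\mathfrak{a}_{\mathbb{C}}^{\perp\mu}\oplus\mathfrak{n}_{\mathbb{C}}$, we get $\mathfrak{k}_{\mathbb{C}}+Z_{\mathfrak{g}_{\mathbb{C}}}(X)\supset\theta\mathfrak{n}_{\mathbb{C}}\oplus\mathfrak{a}_{\mathbb{C}}^{\perp\mu}\oplus\mathfrak{m}_{\mathbb{C}}\oplus\mathfrak{n}_{\mathbb{C}}$, so its $B$-orthogonal complement is contained in $\mathbb{C}A_{\mu}$; and since $(A_{\mu},X,c'\theta X)$ is an $\mathfrak{sl}_2$-triple for suitable $c'$, one has $A_{\mu}\in[X,\mathfrak{k}_{\mathbb{C}}]$ (the bracket with $\theta X\in\mathfrak{k}_{\mathbb{C}}$ produces a multiple of $A_{\mu}$). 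This gives $(\mathfrak{k}_{\mathbb{C}}+Z_{\mathfrak{g}_{\mathbb{C}}}(X))^{\perp B}\subset[X,\mathfrak{k}_{\mathbb{C}}]$, and Lemma \ref{lem:Hcoiso} finishes the proof.

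The main obstacle is the density statement, i.e.\ the complexified KAK-type decomposition $G_{\mathbb{C}}=K_{\mathbb{C}}\exp(\mathbb{C}A_{\mu})(G_{\mathbb{C}})_X$. In the real case one uses the genuine Iwasawa decomposition, which is a global diffeomorphism; in the complex case the factorization $G_{\mathbb{C}}=K_{\mathbb{C}}A_{\mathbb{C}}N_{\mathbb{C}}$ holds only on a dense open subset, so one must be careful that "dense" survives — but density of a dense-open-set image under a continuous map with dense-open domain is automatic, so this is a minor point. The genuinely delicate part is ensuring that $\mathfrak{a}_{\mathbb{C}}^{\perp\mu}\oplus\mathfrak{n}_{\mathbb{C}}$ really sits inside $Z_{\mathfrak{g}_{\mathbb{C}}}(X)$ for the chosen $X$: this is where the compatibility of the positive systems $\Delta^{+}(\mathfrak{g}_{\mathbb{C}},\mathfrak{h}_{\mathbb{C}})$ and $\Sigma^{+}(\mathfrak{g}_{\mathbb{R}},\mathfrak{a}_{\mathbb{R}})$ — so that $\mu$ remains "maximal" after complexification in the relevant sense — is essential, and I would make that step explicit.
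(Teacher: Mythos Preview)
Your proposal is essentially the paper's own approach: the paper simply asserts that ``the proof of Theorem~\ref{thm:realmin} shows its complexified version'', and you have spelled out exactly that complexification, including the extra care needed because $K_{\mathbb{C}}A_{\mathbb{C}}N_{\mathbb{C}}$ is only Zariski open dense in $G_{\mathbb{C}}$ rather than all of it. One small slip to fix: in your parenthetical you claim $\theta X\in\mathfrak{k}_{\mathbb{C}}$, which is false (it lies in the $-\mu$ restricted root space, not in $\mathfrak{k}_{\mathbb{C}}$); the correct observation is that $X+\theta X\in\mathfrak{k}_{\mathbb{C}}$ and $[X,X+\theta X]=[X,\theta X]$ is a nonzero multiple of $A_{\mu}$, which is what both you and the paper actually need.
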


This confirms Theorem \ref{thm:22030921} (2)
 when 
 ${\mathbb{O}}_{\operatorname{min}, {\mathbb{C}}}
  =
  {\mathbb{O}}_{\operatorname{min}, {\mathbb{R}}}^{\mathbb{C}}$, 
 or equivalently, 
 in Cases (2) and (3) of Lemma \ref{lem:CRmin}.

Let us verify Theorem \ref{thm:22030921} (2)
 in the case 
 ${\mathbb{O}}_{\operatorname{min}, {\mathbb{C}}} \ne 
  {\mathbb{O}}_{\operatorname{min}, {\mathbb{R}}}^{\mathbb{C}}$.

We need the following:

\begin{proposition}
[{\cite[Cor.\ 5.9]{KO15}, \cite[Prop.\ 4.1]{O15}}]
\label{prop:OKO}
Let ${\mathfrak{g}}_{\mathbb{R}}$ be a real form
 of a complex simple Lie algebra
 ${\mathfrak{g}}_{\mathbb{C}}$, 
 and ${\mathfrak{k}}_{\mathbb{C}}$
 the complexified Lie algebra
 of ${\mathfrak{k}}_{\mathbb{R}}$, 
 the Lie algebra ${\mathfrak{k}}_{\mathbb{R}}$
 of a maximal compact subgroup $K_{\mathbb{R}}$
 of the analytic subgroup $G_{\mathbb{R}}$
 in $\operatorname{Int}{\mathfrak{g}}_{\mathbb{C}}$.  
Then the following six conditions
 on ${\mathfrak{g}}_{\mathbb{R}}$ 
 are equivalent:
\begin{enumerate}
\item[{\rm{(i)}}]
${\mathbb{O}}_{\operatorname{min}} \cap {\mathfrak{g}}_{\mathbb{R}}
 = \emptyset$.  
\item[{\rm{(ii)}}]
${\mathbb{O}}_{\operatorname{min}, {\mathbb{C}}} \ne 
  {\mathbb{O}}_{\operatorname{min}, {\mathbb{R}}}^{\mathbb{C}}$.  
\item[{\rm{(iii)}}]
$\theta \beta \ne -\beta$.  
\item[{\rm{(iv)}}]
$n({\mathfrak{g}})>n({\mathfrak{g}}_{\mathbb{C}})$.  
\item[{\rm{(v)}}]
${\mathfrak{g}}_{\mathbb{R}}$ is compact
 or is isomorphic to ${\mathfrak{su}}^{\ast}(2n)$, 
 ${\mathfrak{so}}(n-1,1)$ $(n \ge 5)$, 
 ${\mathfrak{sp}}(m,n)$, 
 ${\mathfrak{f}}_{4(-20)}$, 
 or ${\mathfrak{e}}_{6(-26)}$.  
\item[{\rm{(vi)}}]
${\mathfrak{g}}_{\mathbb{C}}={\mathfrak{k}}_{\mathbb{C}}$
 or the pair $({\mathfrak{g}}_{\mathbb{C}}, {\mathfrak{k}}_{\mathbb{C}})$
 is isomorphic to 
$({\mathfrak{s l}}(2n, {\mathbb{C}}), {\mathfrak{s p}}(n,{\mathbb{C}}))$, 
$({\mathfrak{s o}}(n, {\mathbb{C}}), {\mathfrak{s o}}(n-1,{\mathbb{C}}))$ 
 $(n \ge 5)$, 
$({\mathfrak{s p}}(m+n, {\mathbb{C}}), {\mathfrak{s p}}(m,{\mathbb{C}}) \oplus {\mathfrak{s p}}(n, {\mathbb{C}}))$, 
$({\mathfrak{f}}_4^{\mathbb{C}}, {\mathfrak{s o}}(9,{\mathbb{C}}))$, 
 or 
$({\mathfrak{e}}_{6}^{\mathbb{C}}, {\mathfrak{f}}_{4}^{\mathbb{C}})$.  
\end{enumerate}

\end{proposition}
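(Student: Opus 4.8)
The plan is to prove the six conditions equivalent by establishing (i)$\Leftrightarrow$(ii), (i)$\Leftrightarrow$(iv), (i)$\Leftrightarrow$(iii), and (iii)$\Leftrightarrow$(v)$\Leftrightarrow$(vi), the last block being a finite comparison with the classification of real simple Lie algebras. For (i)$\Leftrightarrow$(ii): by construction $\mathbb{O}_{\operatorname{min},\mathbb{R}}^{\mathbb{C}}$ is the unique $G_{\mathbb{C}}$-orbit minimal among the nonzero nilpotent orbits of $\mathfrak{g}_{\mathbb{C}}$ that meet $\mathfrak{g}_{\mathbb{R}}$, and since $\mathbb{O}_{\operatorname{min},\mathbb{C}}$ is the unique smallest nonzero nilpotent orbit of $\mathfrak{g}_{\mathbb{C}}$, the two coincide exactly when $\mathbb{O}_{\operatorname{min},\mathbb{C}}$ itself meets $\mathfrak{g}_{\mathbb{R}}$. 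For (i)$\Leftrightarrow$(iv): a real form of a complex orbit, when non-empty, has real dimension equal to the complex dimension of the orbit (the real and complex centralizers complexify to one another), so $n(\mathfrak{g})=\tfrac12\dim_{\mathbb{C}}\mathbb{O}_{\operatorname{min},\mathbb{R}}^{\mathbb{C}}$ while $n(\mathfrak{g}_{\mathbb{C}})=\tfrac12\dim_{\mathbb{C}}\mathbb{O}_{\operatorname{min},\mathbb{C}}$; since $\overline{\mathbb{O}_{\operatorname{min},\mathbb{C}}}\subseteq\overline{\mathbb{O}_{\operatorname{min},\mathbb{R}}^{\mathbb{C}}}$, equality of dimensions forces equality of the orbits, and Lemma \ref{lem:CRmin} (which places Hermitian-type $\mathfrak{g}_{\mathbb{R}}$ into its Case (3), where (i) fails) closes this loop.

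The analytic core is (i)$\Leftrightarrow$(iii), for which I would fix the maximally split $\theta$-stable Cartan subalgebra $\mathfrak{h}_{\mathbb{C}}=\mathfrak{t}_{\mathbb{C}}+\mathfrak{a}_{\mathbb{C}}$ and a compatible positive system, so that the highest root $\beta$ restricts on $\mathfrak{a}_{\mathbb{R}}$ to the highest restricted root $\mu$. A useful first step recasts (iii) as \lq\lq$m_{\mu}=1$\rq\rq, where $m_{\mu}:=\dim\mathfrak{g}_{\mathbb{R}}(\mathfrak{a}_{\mathbb{R}};\mu)$: writing $\sigma$ for the conjugation of $\mathfrak{g}_{\mathbb{C}}$ with fixed point set $\mathfrak{g}_{\mathbb{R}}$, one has $\sigma=-\theta$ on roots, so $\sigma$ permutes the roots restricting to $\mu$; hence if $m_{\mu}=1$ then $\beta$ is the only such root, is $\sigma$-fixed, and $\theta\beta=-\beta$, while conversely, if $\theta\beta=-\beta$ and a root $\alpha\neq\beta$ also restricted to $\mu$, then $\beta-\alpha$ would be a sum of roots vanishing on $\mathfrak{a}_{\mathbb{R}}$ — hence orthogonal to the \lq\lq real\rq\rq\ root $\beta$ — forcing $\langle\alpha,\beta^{\vee}\rangle=2$ and therefore $\alpha=\beta$ because $\beta$ is highest. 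Granting this, the \lq\lq if\rq\rq\ direction of (i)$\Leftrightarrow$(iii) is immediate: $\theta\beta=-\beta$ gives $H_{\beta}\in\mathfrak{a}_{\mathbb{R}}$ and a one-dimensional $\sigma$-stable root space $\mathfrak{g}_{\mathbb{C}}(\mathfrak{h}_{\mathbb{C}};\beta)$, hence a nonzero real highest-root vector lying in $\mathbb{O}_{\operatorname{min},\mathbb{C}}\cap\mathfrak{g}_{\mathbb{R}}$. For the converse, starting from $0\neq X\in\mathbb{O}_{\operatorname{min},\mathbb{C}}\cap\mathfrak{g}_{\mathbb{R}}$ I would apply the real Jacobson--Morozov theorem to embed $X$ in a real $\mathfrak{sl}_{2}$-triple $(H,X,Y)$ with semisimple $H\in\mathfrak{g}_{\mathbb{R}}$, conjugate $H$ by $G_{\mathbb{R}}$ into the closed dominant chamber of $\mathfrak{a}_{\mathbb{R}}$ (using $W(\mathfrak{g}_{\mathbb{R}},\mathfrak{a}_{\mathbb{R}})\subset W(\mathfrak{g}_{\mathbb{C}},\mathfrak{h}_{\mathbb{C}})$ and the compatibility of positive systems, so that this chamber lies inside the dominant chamber of $\mathfrak{h}_{\mathbb{C}}$), and note that $H$ is then the dominant weighted-Dynkin representative of $\mathbb{O}_{\operatorname{min},\mathbb{C}}$, which is $H_{\beta}$; so $H_{\beta}\in\mathfrak{a}_{\mathbb{R}}$, i.e.\ $\theta\beta=-\beta$. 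Equivalently, one can feed in the normal form $\mathbb{O}=\operatorname{Ad}(G_{\mathbb{R}})X$ with $X\in\mathfrak{g}_{\mathbb{R}}(\mathfrak{a}_{\mathbb{R}};\mu)$ recalled before Theorem \ref{thm:realmin}, together with the fact that for nonzero $X$ in that root space $\operatorname{Ad}(G_{\mathbb{C}})X$ equals $\mathbb{O}_{\operatorname{min},\mathbb{C}}$ if and only if $\dim\mathfrak{g}_{\mathbb{R}}(\mathfrak{a}_{\mathbb{R}};\mu)=1$.

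It remains to identify this family of $\mathfrak{g}_{\mathbb{R}}$ explicitly. The equivalence (v)$\Leftrightarrow$(vi) is the dictionary obtained by complexifying the Cartan decomposition $\mathfrak{g}_{\mathbb{R}}=\mathfrak{k}_{\mathbb{R}}+\mathfrak{p}_{\mathbb{R}}$: \lq\lq compact\rq\rq\ corresponds to $\mathfrak{g}_{\mathbb{C}}=\mathfrak{k}_{\mathbb{C}}$, and $\mathfrak{su}^{\ast}(2n),\ \mathfrak{so}(n-1,1),\ \mathfrak{sp}(m,n),\ \mathfrak{f}_{4(-20)},\ \mathfrak{e}_{6(-26)}$ correspond respectively to $(\mathfrak{sl}(2n,\mathbb{C}),\mathfrak{sp}(n,\mathbb{C}))$, $(\mathfrak{so}(n,\mathbb{C}),\mathfrak{so}(n-1,\mathbb{C}))$, $(\mathfrak{sp}(m+n,\mathbb{C}),\mathfrak{sp}(m,\mathbb{C})\oplus\mathfrak{sp}(n,\mathbb{C}))$, $(\mathfrak{f}_{4}^{\mathbb{C}},\mathfrak{so}(9,\mathbb{C}))$, $(\mathfrak{e}_{6}^{\mathbb{C}},\mathfrak{f}_{4}^{\mathbb{C}})$. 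For (iii)$\Leftrightarrow$(v) I would go through the Satake diagrams of the real simple Lie algebras and compute $\theta\beta$ for the highest root $\beta$ in each: for split and quasi-split forms $\theta\beta=-\beta$ holds automatically ($\theta$ acts by $-1$, respectively by minus a diagram automorphism fixing $\beta$), while among the remaining forms $\theta\beta\neq-\beta$ occurs exactly in the five families above — equivalently, precisely where the highest restricted root acquires multiplicity $m_{\mu}\geq2$.

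I expect the real obstacle to be the \lq\lq only if\rq\rq\ half of (i)$\Leftrightarrow$(iii): excluding \emph{every} real point of $\mathbb{O}_{\operatorname{min},\mathbb{C}}$ when $\theta\beta\neq-\beta$ requires controlling the whole orbit rather than a single representative, and it rests on the uniqueness of weighted Dynkin diagrams together with the structure of real minimal nilpotent orbits recalled before Theorem \ref{thm:realmin}. The other place where genuine bookkeeping is needed is the case-by-case verification of (iii)$\Leftrightarrow$(v), since that is what actually produces the explicit list in the statement.
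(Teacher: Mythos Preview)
The paper does not give its own proof of this proposition; it is stated with attribution to \cite{KO15} and \cite{O15}, followed only by a historical remark that the equivalence (i)$\Leftrightarrow$(v) was asserted by Brylinski \cite{B98} and fully proved by Okuda \cite{O15}. So there is nothing in the paper to compare your argument against.

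Your outline is nonetheless a correct reconstruction of how the cited references proceed. The implications (i)$\Leftrightarrow$(ii)$\Leftrightarrow$(iv) are indeed formal from the definitions of ${\mathbb{O}}_{\operatorname{min},\mathbb{R}}^{\mathbb{C}}$ and $n({\mathfrak{g}})$, and your appeal to Lemma~\ref{lem:CRmin} is not actually needed there. Your reformulation of (iii) as ``$m_\mu=1$'' and the Jacobson--Morozov/weighted-Dynkin argument for (i)$\Rightarrow$(iii) are essentially Okuda's approach in \cite{O15}; the only step worth making explicit is that an element of $\mathfrak{a}_{\mathbb{R}}$ dominant for $\Sigma^+({\mathfrak{g}}_{\mathbb{R}},{\mathfrak{a}}_{\mathbb{R}})$ is automatically dominant for $\Delta^+({\mathfrak{g}}_{\mathbb{C}},{\mathfrak{h}}_{\mathbb{C}})$, which follows because every positive root restricts on $\mathfrak{a}_{\mathbb{R}}$ either to zero or to an element of $\Sigma^+$ under the compatible choice of positive systems. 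The dictionary (v)$\Leftrightarrow$(vi) and the Satake-diagram check (iii)$\Leftrightarrow$(v) are, as you say, routine bookkeeping.
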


\begin{remark}
The equivalence (i) $\iff$ (v) was stated 
 in \cite[Prop.\ 4.1]{B98}
 without proof, 
 and Okuda \cite{O15} supplied a complete proof.  
\end{remark}

\begin{lemma}
\label{lem:22031312}
Suppose $X$ is a highest root vector, 
 namely, 
 $0 \ne X \in {\mathfrak{g}}_{\mathbb{C}}({\mathfrak{h}}_{\mathbb{C}}
;\beta)$.  
If $\theta \beta \ne - \beta$, 
 then $H_{\beta} \in {\mathfrak{k}}_{\mathbb{C}}+Z_{\mathfrak{g}_{\mathbb{C}}}(X)$.  
\end{lemma}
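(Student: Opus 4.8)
The plan is to analyze the decomposition of the highest root vector $X \in {\mathfrak{g}}_{\mathbb{C}}({\mathfrak{h}}_{\mathbb{C}};\beta)$ and of the coroot $H_\beta$ under the involution $\theta$. First I would observe that since $\theta$ preserves the root space decomposition relative to the $\theta$-stable Cartan ${\mathfrak{h}}_{\mathbb{C}}$ (recall ${\mathfrak{h}}_{\mathbb{C}}={\mathfrak{t}}_{\mathbb{C}}+{\mathfrak{a}}_{\mathbb{C}}$ was chosen with ${\mathfrak{t}}_{\mathbb{C}}={\mathfrak{h}}_{\mathbb{C}}\cap{\mathfrak{k}}_{\mathbb{C}}$), the map $\theta$ sends the one-dimensional space ${\mathfrak{g}}_{\mathbb{C}}({\mathfrak{h}}_{\mathbb{C}};\beta)$ to ${\mathfrak{g}}_{\mathbb{C}}({\mathfrak{h}}_{\mathbb{C}};\theta\beta)$, so $\theta X \in {\mathfrak{g}}_{\mathbb{C}}({\mathfrak{h}}_{\mathbb{C}};\theta\beta)$ is again a root vector (nonzero).

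The key case distinction is whether $\theta\beta$ equals $\beta$ or is some other root. The hypothesis $\theta\beta\ne-\beta$ is exactly condition (iii) of Proposition \ref{prop:OKO}, which in particular rules out the Hermitian situation where $-\beta$ would be the highest root of a noncompact piece. I would split into: (a) $\theta\beta=\beta$, in which case $X+\theta X$ (or $X$ itself after rescaling, if $X$ is already a $\theta$-eigenvector) lies in ${\mathfrak{k}}_{\mathbb{C}}$, so that $H_\beta\in[{\mathfrak{g}}_{\mathbb{C}}({\mathfrak{h}}_{\mathbb{C}};\beta),{\mathfrak{g}}_{\mathbb{C}}({\mathfrak{h}}_{\mathbb{C}};-\beta)]$ can be produced by bracketing an element of ${\mathfrak{k}}_{\mathbb{C}}$ against $\theta X$, which (since $\theta\beta=\beta$ forces $\theta({\mathfrak{g}}_{\mathbb{C}}({\mathfrak{h}}_{\mathbb{C}};-\beta))={\mathfrak{g}}_{\mathbb{C}}({\mathfrak{h}}_{\mathbb{C}};-\beta)$) can itself be arranged inside ${\mathfrak{k}}_{\mathbb{C}}+Z_{{\mathfrak{g}}_{\mathbb{C}}}(X)$ — here I use that $Z_{{\mathfrak{g}}_{\mathbb{C}}}(X)$ contains ${\mathfrak{n}}_{\mathbb{C}}^+$ and part of ${\mathfrak{n}}_{\mathbb{C}}^-$ per \eqref{eqn:Zglmd}, but more to the point contains every root vector except (up to the ${\mathfrak{sl}}_2$-triple generated by $\beta$) those needed; (b) $\theta\beta\notin\{\beta,-\beta\}$, in which case $\theta\beta$ is a distinct root and I would use the ${\mathfrak{sl}}_2(\beta)$-structure together with the fact that $\theta X$ is a root vector for a root $\ne\pm\beta$, hence $\theta X\in Z_{{\mathfrak{g}}_{\mathbb{C}}}(X)$ would follow if $\beta+\theta\beta$ and $\beta-\theta\beta$ are not roots or if the relevant brackets vanish — more carefully, I want to write $H_\beta$ as a combination of $X$ and $\theta X$-type vectors modulo $Z_{{\mathfrak{g}}_{\mathbb{C}}}(X)$ and ${\mathfrak{k}}_{\mathbb{C}}$.

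A cleaner route, which I would actually pursue, is this: write $H_\beta = \tfrac12(H_\beta+\theta H_\beta) + \tfrac12(H_\beta-\theta H_\beta)$; the first summand lies in ${\mathfrak{k}}_{\mathbb{C}}$ automatically, so it suffices to show $H_\beta-\theta H_\beta \in Z_{{\mathfrak{g}}_{\mathbb{C}}}(X)$, i.e.\ that $[H_\beta-\theta H_\beta,X]=0$. Now $[H_\beta,X]=\beta(H_\beta)X=2X$, while $[\theta H_\beta,X]=-\theta[H_\beta,\theta X]=-(\theta\beta)(H_\beta)\,\theta(\theta X)$; wait, more directly $\theta H_\beta=H_{\theta\beta}$ (the involution sends coroots to coroots), so $[H_{\theta\beta},X]=(\theta\beta)(H_\beta)\,X=\langle\theta\beta,\beta^\vee\rangle X$. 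Hence $[H_\beta-\theta H_\beta,X]=(2-\langle\theta\beta,\beta^\vee\rangle)X$, and this vanishes precisely when $\langle\theta\beta,\beta^\vee\rangle=2$, i.e.\ $\theta\beta=\beta$ (since $\beta$ is the highest root, $\langle\theta\beta,\beta^\vee\rangle\le 2$ with equality iff $\theta\beta=\beta$). So the above only handles the easy case. The main obstacle is therefore the genuinely mixed case $\theta\beta\ne\pm\beta$: there I expect to need the explicit list in Proposition \ref{prop:OKO}(vi), or an argument that in each such $({\mathfrak{g}}_{\mathbb{C}},{\mathfrak{k}}_{\mathbb{C}})$ one can find $Z\in{\mathfrak{k}}_{\mathbb{C}}$ with $[Z,X]-H_\beta\in Z_{{\mathfrak{g}}_{\mathbb{C}}}(X)$, equivalently $[Z,X]\equiv H_\beta$ in ${\mathfrak{g}}_{\mathbb{C}}/Z_{{\mathfrak{g}}_{\mathbb{C}}}(X)\cong T_X{\mathbb{O}}_{\operatorname{min},{\mathbb{C}}}$. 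Concretely I would take $Z=\theta X$ rescaled: since $\theta\beta\ne-\beta$ rules out $[\theta X,X]$ being forced to land in the wrong space by the Hermitian obstruction, and since $X+\theta X\in{\mathfrak{k}}_{\mathbb{C}}$, I would compute $[X+\theta X,\ \cdot\ ]$ against a lowest-root-type vector and track the ${\mathfrak{h}}_{\mathbb{C}}$-component, showing it hits $H_\beta$ modulo $Z_{{\mathfrak{g}}_{\mathbb{C}}}(X)$. Verifying this last bracket computation, and confirming it does not degenerate, is the step I expect to be delicate and where case-checking against Proposition \ref{prop:OKO}(v)–(vi) will be needed.
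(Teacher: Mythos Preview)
Your ``cleaner route'' is very close to the paper's argument, but you make it unnecessarily rigid by insisting that the decomposition $H_\beta = (\text{piece in }{\mathfrak{k}}_{\mathbb{C}}) + (\text{piece in }Z_{{\mathfrak{g}}_{\mathbb{C}}}(X))$ be the $\theta$-eigenspace decomposition $H_\beta = \tfrac12(H_\beta+\theta H_\beta)+\tfrac12(H_\beta-\theta H_\beta)$. That forces the second piece to lie in ${\mathfrak{a}}_{\mathbb{C}}$, and then you correctly observe it lies in $Z_{{\mathfrak{g}}_{\mathbb{C}}}(X)$ only when $\theta\beta=\beta$. But there is no reason to use that particular splitting: any $T\in{\mathfrak{t}}_{\mathbb{C}}$ works for the first piece, so the question is simply whether some $T\in{\mathfrak{t}}_{\mathbb{C}}$ satisfies $H_\beta-T\in{\mathfrak{h}}_{\mathbb{C}}^{\perp\beta}$, i.e.\ $\beta(T)=2$.

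This is exactly the paper's two-line proof. From \eqref{eqn:Zglmd} one has ${\mathfrak{h}}_{\mathbb{C}}^{\perp\beta}\subset Z_{{\mathfrak{g}}_{\mathbb{C}}}(X)$, a codimension-one subspace of ${\mathfrak{h}}_{\mathbb{C}}$. Hence ${\mathfrak{t}}_{\mathbb{C}}+{\mathfrak{h}}_{\mathbb{C}}^{\perp\beta}={\mathfrak{h}}_{\mathbb{C}}$ as soon as ${\mathfrak{t}}_{\mathbb{C}}\not\subset{\mathfrak{h}}_{\mathbb{C}}^{\perp\beta}$, i.e.\ $\beta|_{{\mathfrak{t}}_{\mathbb{C}}}\not\equiv 0$. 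Since $\theta$ acts by $+1$ on ${\mathfrak{t}}_{\mathbb{C}}$ and $-1$ on ${\mathfrak{a}}_{\mathbb{C}}$, one has $\beta|_{{\mathfrak{t}}_{\mathbb{C}}}\equiv 0$ iff $\theta\beta=-\beta$. Thus $\theta\beta\ne-\beta$ gives $H_\beta\in{\mathfrak{h}}_{\mathbb{C}}={\mathfrak{t}}_{\mathbb{C}}+{\mathfrak{h}}_{\mathbb{C}}^{\perp\beta}\subset{\mathfrak{k}}_{\mathbb{C}}+Z_{{\mathfrak{g}}_{\mathbb{C}}}(X)$, with no case distinction needed. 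Your planned detour through $\theta X$, bracket computations like $[X+\theta X,\,\cdot\,]$, and a case-check against Proposition~\ref{prop:OKO}(v)--(vi) is entirely unnecessary: the lemma is a statement purely about the Cartan subalgebra, and the root vectors outside ${\mathfrak{h}}_{\mathbb{C}}$ play no role.
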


\begin{proof}
Since $\theta \beta \ne - \beta$, 
 one has $\beta|_{\mathfrak{t}_{\mathbb{C}}} \not \equiv 0$, 
 namely, 
 ${\mathfrak{t}}_{\mathbb{C}} \not \subset {\mathfrak{h}}_{\mathbb{C}}^{\perp \beta}$.  
Since ${\mathfrak{h}}_{\mathbb{C}}^{\perp \beta}$ is of codimension one
 in ${\mathfrak{h}}_{\mathbb{C}}$, 
 we get ${\mathfrak{t}}_{\mathbb{C}} + {\mathfrak{h}}_{\mathbb{C}}^{\perp \beta}={\mathfrak{h}}_{\mathbb{C}}$.  
Thus $H_{\beta} \in {\mathfrak{h}}_{\mathbb{C}} \subset {\mathfrak{k}}_{\mathbb{C}} + Z_{\mathfrak{g}_{\mathbb{C}}}(X)$.  
\end{proof}

\begin{proposition}
\label{prop:22031313}
If one of (and therefore any of) the equivalent conditions
 in Proposition \ref{prop:OKO} holds,
 then $K_{\mathbb{C}}$ has a Zariski open orbit
 in ${\mathbb{O}}_{\operatorname{min}, {\mathbb{C}}}$.  
In particular, 
 the $K_{\mathbb{C}}$-action 
 on ${\mathbb{O}}_{\operatorname{min}, {\mathbb{C}}}$ is coisotropic.  
\end{proposition}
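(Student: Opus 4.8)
The plan is to exhibit an explicit point of ${\mathbb{O}}_{\operatorname{min},{\mathbb{C}}}$ whose $K_{\mathbb{C}}$-orbit is Zariski open, using the highest root vector $X \in {\mathfrak{g}}_{\mathbb{C}}({\mathfrak{h}}_{\mathbb{C}};\beta)$ that parametrizes ${\mathbb{O}}_{\operatorname{min},{\mathbb{C}}} = \operatorname{Ad}(G_{\mathbb{C}})X$. The orbit $K_{\mathbb{C}}\cdot X$ is Zariski open in ${\mathbb{O}}_{\operatorname{min},{\mathbb{C}}}$ precisely when its dimension equals $\dim {\mathbb{O}}_{\operatorname{min},{\mathbb{C}}} = \dim {\mathfrak{g}}_{\mathbb{C}} - \dim Z_{{\mathfrak{g}}_{\mathbb{C}}}(X)$, equivalently when the tangent space $[{\mathfrak{k}}_{\mathbb{C}}, X]$ together with $Z_{{\mathfrak{g}}_{\mathbb{C}}}(X)$ spans all of ${\mathfrak{g}}_{\mathbb{C}}$, i.e.
\[
  {\mathfrak{k}}_{\mathbb{C}} + Z_{{\mathfrak{g}}_{\mathbb{C}}}(X) = {\mathfrak{g}}_{\mathbb{C}}.
\]
So the whole proposition reduces to verifying this single Lie-algebra identity under the hypothesis $\theta\beta \ne -\beta$ (condition (iii) of Proposition~\ref{prop:OKO}).

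To prove the identity, first I would use Lemma~\ref{lem:22031312}, which already gives $H_{\beta} \in {\mathfrak{k}}_{\mathbb{C}} + Z_{{\mathfrak{g}}_{\mathbb{C}}}(X)$ (in fact it shows all of ${\mathfrak{h}}_{\mathbb{C}}$ lies there). Next, since $(H_\beta, X, Y)$ is an ${\mathfrak{sl}}_2$-triple with $Y$ a lowest root vector, and $[H_\beta, {\mathfrak{k}}_{\mathbb{C}}] \subset {\mathfrak{k}}_{\mathbb{C}}$ while $H_\beta$ is already accounted for, I would bracket $H_\beta$ against ${\mathfrak{k}}_{\mathbb{C}} + Z_{{\mathfrak{g}}_{\mathbb{C}}}(X)$: the right-hand side is visibly $\operatorname{ad}(H_\beta)$-stable, and $\operatorname{ad}(H_\beta)$ has eigenvalues $0, \pm 1, \pm 2$ on ${\mathfrak{g}}_{\mathbb{C}}$ (the short ${\mathfrak{sl}}_2$-grading attached to the minimal orbit). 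The eigenvalue-$2$ space is ${\mathbb{C}}X$, contained in $Z_{{\mathfrak{g}}_{\mathbb{C}}}(X)$; the eigenvalue-$0$ space is ${\mathfrak{h}}_{\mathbb{C}} \oplus (\text{roots} \perp \beta)$, also contained in $Z_{{\mathfrak{g}}_{\mathbb{C}}}(X)$ by \eqref{eqn:Zglmd} together with the fact that ${\mathfrak{h}}_{\mathbb{C}} \subset {\mathfrak{k}}_{\mathbb{C}} + Z_{{\mathfrak{g}}_{\mathbb{C}}}(X)$. Hence only the eigenvalue $\pm 1$ parts ${\mathfrak{g}}_{\mathbb{C}}[\pm 1]$ remain to be captured. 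Here I would observe that $\theta$ preserves the ${\mathfrak{sl}}_2$-grading up to sign: since $\theta\beta \ne -\beta$, a short computation (as in Lemma~\ref{lem:22031312}, using $\theta H_\beta = H_{-\theta\beta}$ and $-\theta\beta \ne \beta$, while also $-\theta\beta$ is a root) shows $\theta$ actually fixes the grading, so ${\mathfrak{g}}_{\mathbb{C}}[\pm 1]$ each decompose into $\pm 1$ eigenspaces of $\theta$; the $+1$ part lies in ${\mathfrak{k}}_{\mathbb{C}}$, and the $-1$ part (in ${\mathfrak{p}}_{\mathbb{C}}$) is recovered by bracketing the $+1$ part against $X \in {\mathfrak{k}}_{\mathbb{C}}[2]$ or $Y$, or directly from the classification in Proposition~\ref{prop:OKO}(vi), where in each case ${\mathfrak{g}}_{\mathbb{C}}[\pm 1]$ is a single irreducible ${\mathfrak{k}}_{\mathbb{C}}$-module so $[{\mathfrak{k}}_{\mathbb{C}}, X] \supset {\mathfrak{g}}_{\mathbb{C}}[\pm 1]$ follows from ${\mathfrak{k}}_{\mathbb{C}}$-irreducibility and nonvanishing. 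Assembling the pieces gives ${\mathfrak{k}}_{\mathbb{C}} + Z_{{\mathfrak{g}}_{\mathbb{C}}}(X) = {\mathfrak{g}}_{\mathbb{C}}$, hence $K_{\mathbb{C}}\cdot X$ is Zariski open.

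The last sentence — that a Zariski open orbit forces the action to be coisotropic — is immediate: on the open orbit $U = K_{\mathbb{C}}\cdot X$ the tangent space $T_x(K_{\mathbb{C}}\cdot x)$ equals all of $T_x{\mathbb{O}}_{\operatorname{min},{\mathbb{C}}}$, which is trivially coisotropic (its $\omega$-perp is zero), so Definition~\ref{def:coisotropic} is satisfied with this $U$. The main obstacle I anticipate is the clean treatment of the $\pm 1$ eigenspaces: one must confirm that $\theta$ preserves (rather than flips) the short grading when $\theta\beta \ne -\beta$, and that $[{\mathfrak{k}}_{\mathbb{C}}, X]$ exhausts ${\mathfrak{g}}_{\mathbb{C}}[-1] \cap {\mathfrak{p}}_{\mathbb{C}}$; both can be settled uniformly via the ${\mathfrak{sl}}_2$-representation theory of the grading element $H_\beta$, but if a uniform argument proves awkward one can fall back on the explicit five-item list of Proposition~\ref{prop:OKO}(vi) and check each pair directly, where the relevant ${\mathfrak{k}}_{\mathbb{C}}$-module structures on ${\mathfrak{g}}_{\mathbb{C}}[\pm 1]$ are classical.
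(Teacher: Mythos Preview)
Your reduction is exactly the right one and matches the paper: the $K_{\mathbb{C}}$-orbit through the highest root vector $X$ is open iff ${\mathfrak{k}}_{\mathbb{C}}+Z_{{\mathfrak{g}}_{\mathbb{C}}}(X)={\mathfrak{g}}_{\mathbb{C}}$, and the coisotropic conclusion is then immediate.  Where you diverge from the paper is in how you verify this identity.  The paper's one-line proof is meant to be read as follows: by \eqref{eqn:Zglmd} one has $Z_{{\mathfrak{g}}_{\mathbb{C}}}(X)\supset {\mathfrak{h}}_{\mathbb{C}}^{\perp\beta}\oplus{\mathfrak{n}}_{\mathbb{C}}^+$; by Lemma~\ref{lem:22031312} (whose proof shows ${\mathfrak{t}}_{\mathbb{C}}+{\mathfrak{h}}_{\mathbb{C}}^{\perp\beta}={\mathfrak{h}}_{\mathbb{C}}$) one gets ${\mathfrak{k}}_{\mathbb{C}}+Z_{{\mathfrak{g}}_{\mathbb{C}}}(X)\supset {\mathfrak{k}}_{\mathbb{C}}+{\mathfrak{h}}_{\mathbb{C}}+{\mathfrak{n}}_{\mathbb{C}}^+$; and since the positive system $\Delta^+$ was chosen compatibly with $\Sigma^+({\mathfrak{g}}_{\mathbb{R}},{\mathfrak{a}}_{\mathbb{R}})$, the complexified Iwasawa decomposition ${\mathfrak{g}}_{\mathbb{C}}={\mathfrak{k}}_{\mathbb{C}}+{\mathfrak{a}}_{\mathbb{C}}+({\mathfrak{n}}_{\mathbb{R}})_{\mathbb{C}}$ with $({\mathfrak{n}}_{\mathbb{R}})_{\mathbb{C}}\subset{\mathfrak{n}}_{\mathbb{C}}^+$ finishes the job in one stroke.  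No eigenspace analysis is needed.

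Your ${\mathfrak{sl}}_2$-grading approach is workable but more laborious, and contains a genuine soft spot.  The assertion that $\theta$ ``fixes the grading'' amounts to $\theta H_\beta=H_\beta$, i.e.\ $\theta\beta=\beta$, which is strictly stronger than the hypothesis $\theta\beta\neq-\beta$; when $\beta$ is a complex root ($\theta\beta\neq\pm\beta$) the grading is \emph{not} $\theta$-stable, and the statement $X\in{\mathfrak{k}}_{\mathbb{C}}[2]$ fails as well.  You also overlook that ${\mathfrak{g}}_{\mathbb{C}}[+1]\subset{\mathfrak{n}}_{\mathbb{C}}^+\subset Z_{{\mathfrak{g}}_{\mathbb{C}}}(X)$ is automatic, and that the $(-2)$-eigenspace ${\mathbb{C}}Y$ still needs to be accounted for.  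Your proposed fallback to the five-item list would rescue the argument, but the Iwasawa route avoids all of this uniformly.
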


\begin{proof}
Since ${\mathbb{O}}_{\operatorname{min}, {\mathbb{C}}}=\operatorname{Ad}(G_{\mathbb{C}})X$
 for a non-zero $X \in {\mathfrak{g}}_{\mathbb{C}}({\mathfrak{h}}_{\mathbb{C}};\beta)$, 
 the proposition is clear.  
\end{proof}

\begin{proof}[Proof of Theorem \ref{thm:22030921} (2)]
The Case (1) in Lemma \ref{lem:CRmin} is proved in Proposition \ref{prop:22031313}, 
 and the Cases (2) and (3) are proved in Theorem \ref{thm:22030921b}.  
\end{proof}

\subsection{Proof of Theorems in Section \ref{sec:Intro}}
As we saw at the end of Section \ref{sec:2}, 
Theorems \ref{thm:Joseph} and \ref{thm:tensormin}
 are derived from the geometric result, 
 namely, from Theorem \ref{thm:22030921}, 
 and thus the proof of these theorems
 has been completed.

In the same manner, 
 one can deduce readily from Theorem \ref{thm:22030921b}
 the following bounded multiplicity property
 which is not covered by Theorem \ref{thm:Joseph}
 for the five cases
 in Proposition \ref{prop:OKO}
 where $n({\mathfrak{g}})>n({\mathfrak{g}}_{\mathbb{C}})$.

\begin{theorem}
\label{thm:Rminbdd}
Suppose that the Gelfand--Kirillov dimension 
 of $\Pi \in \operatorname{Irr}(G)$
 is $n({\mathfrak{g}})$.  
If $(G,G')$ is a symmetric pair
 such that ${\mathfrak{g}}_{\mathbb{C}}'$ is conjugate
 to ${\mathfrak{k}}_{\mathbb{C}}$
 by $\operatorname{Int}{\mathfrak{g}}_{\mathbb{C}}$, 
 then $m(\Pi|_{G'})<\infty$.  
\end{theorem}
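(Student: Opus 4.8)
The plan is to mirror the proof of Theorem~\ref{thm:22030921b} (resp.\ Theorem~\ref{thm:Rminbdd}'s predecessor Theorem~\ref{thm:realmin}) at the representation-theoretic level by invoking Fact~\ref{fact:Ki}. The hypothesis is that $\operatorname{DIM}(\Pi) = n({\mathfrak{g}})$. First I would identify the associated variety of the annihilator ${\mathcal{I}}$ of $\Pi$ in $U({\mathfrak{g}}_{\mathbb{C}})$: since the associated variety is a union of nilpotent coadjoint orbits of dimension $2n({\mathfrak{g}})$, and since $n({\mathfrak{g}}) < n({\mathfrak{g}}_{\mathbb{C}})$ is impossible while any nilpotent $G_{\mathbb{C}}$-orbit meeting ${\mathfrak{g}}_{\mathbb{R}}$ has dimension $\geq 2n({\mathfrak{g}})$ with equality forcing it to be ${\mathbb{O}}_{\operatorname{min},{\mathbb{R}}}^{\mathbb{C}}$, the associated variety must be $\overline{{\mathbb{O}}_{\operatorname{min},{\mathbb{R}}}^{\mathbb{C}}}$. (Here one uses that ${\mathcal{I}}$ is the annihilator of a genuine real representation, so its associated variety lies in $\overline{{\mathcal{N}}_{\mathbb{R},\mathbb{C}}}$.)

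Next, the hypothesis that ${\mathfrak{g}}_{\mathbb{C}}'$ is $\operatorname{Int}{\mathfrak{g}}_{\mathbb{C}}$-conjugate to ${\mathfrak{k}}_{\mathbb{C}}$ means that, after conjugating, the $G_{\mathbb{C}}'$-action on the associated variety $\overline{{\mathbb{O}}_{\operatorname{min},{\mathbb{R}}}^{\mathbb{C}}}$ is (up to the action of the connected group, which is what matters for the open-dense-set condition in Definition~\ref{def:coisotropic}) the $K_{\mathbb{C}}$-action on $\overline{{\mathbb{O}}_{\operatorname{min},{\mathbb{R}}}^{\mathbb{C}}}$. By Theorem~\ref{thm:22030921b}, the $K_{\mathbb{C}}$-action on ${\mathbb{O}}_{\operatorname{min},{\mathbb{R}}}^{\mathbb{C}}$ is coisotropic, and since ${\mathbb{O}}_{\operatorname{min},{\mathbb{R}}}^{\mathbb{C}}$ is the unique open orbit in its closure (the closure being the associated variety), the coisotropy condition on an $H$-stable open dense subset is satisfied on all of the associated variety. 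Hence the $G_{\mathbb{C}}'$-action on the associated variety of ${\mathcal{I}}$ is coisotropic.

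Applying Fact~\ref{fact:Ki} then yields $m(\Pi|_{G'}) < \infty$, which is the claim.

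The step I expect to be the main (if modest) obstacle is the first one: verifying cleanly that $\operatorname{DIM}(\Pi) = n({\mathfrak{g}})$ forces the associated variety of ${\mathcal{I}}$ to be exactly $\overline{{\mathbb{O}}_{\operatorname{min},{\mathbb{R}}}^{\mathbb{C}}}$ rather than merely a subvariety of it of the right dimension. This requires knowing that the associated variety of the annihilator of an irreducible Harish-Chandra module is irreducible (it is the closure of a single nilpotent orbit, by a theorem of Joseph), so that dimension $2n({\mathfrak{g}})$ together with the constraint that it lies in $\overline{{\mathcal{N}}_{\mathbb{R},\mathbb{C}}}$ and the characterization of ${\mathbb{O}}_{\operatorname{min},{\mathbb{R}}}^{\mathbb{C}}$ as the unique minimal orbit there (from Section~\ref{subsec:realmin} and \cite{O15}) pins it down. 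The remainder is a direct invocation of Theorems~\ref{thm:22030921b} and Fact~\ref{fact:Ki}, exactly parallel to how Theorems~\ref{thm:Joseph} and \ref{thm:tensormin} were deduced from Theorem~\ref{thm:22030921}.
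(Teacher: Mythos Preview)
Your proposal is correct and follows essentially the same route as the paper's proof: identify the associated variety of the annihilator with $\overline{{\mathbb{O}}_{\operatorname{min},{\mathbb{R}}}^{\mathbb{C}}}$, transfer the coisotropy from $K_{\mathbb{C}}$ to $G_{\mathbb{C}}'$ via conjugacy using Theorem~\ref{thm:22030921b}, and apply Fact~\ref{fact:Ki}. The paper's proof is terser---it leaves the identification of the associated variety implicit---whereas you spell out the justification (Joseph's irreducibility theorem, the constraint that the associated variety lies in $\overline{{\mathcal{N}}_{\mathbb{R},\mathbb{C}}}$, and the uniqueness of ${\mathbb{O}}_{\operatorname{min},{\mathbb{R}}}^{\mathbb{C}}$ as the minimal nonzero orbit there); but the argument is the same.
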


\begin{proof}
We write $G_{\mathbb{C}}'$ and $K_{\mathbb{C}}$
 for the analytic subgroups
 of $G_{\mathbb{C}}=\operatorname{Int}{\mathfrak{g}}_{\mathbb{C}}$
 with Lie algebras ${\mathfrak{g}}_{\mathbb{C}}'$
 and ${\mathfrak{k}}_{\mathbb{C}}$, 
 respectively.  
Then the $K_{\mathbb{C}}$-action
 on ${\mathbb{O}}_{\operatorname{min}, {\mathbb{R}}}^{\mathbb{C}}$
 is coisotropic by Theorem \ref{thm:22030921b}, 
 and so is the $G_{\mathbb{C}}'$-action
 on ${\mathbb{O}}_{\operatorname{min}, {\mathbb{R}}}^{\mathbb{C}}$
 because $G_{\mathbb{C}}'$ and $K_{\mathbb{C}}$
 are conjugate by an element of $G_{\mathbb{C}}$.  
Hence the theorem follows from Fact \ref{fact:Ki}.  
\end{proof}

Finally, 
 we give a proof of Theorem \ref{thm:irr}.  
\begin{proof}
[Proof of Theorem \ref{thm:irr}]
Let ${\mathcal{J}}$
 be the Joseph ideal.  
Let 
$(U({\mathfrak{g}}_{\mathbb{C}})/{\mathcal{J}})^{{\mathfrak{g}}_{\mathbb{C}}'}$
be the algebra of ${\mathfrak{g}}_{\mathbb{C}}'$-invariant elements
 in $U({\mathfrak{g}}_{\mathbb{C}})/{\mathcal{J}}$
 via the adjoint action.  
Then one has 
\[
(U({\mathfrak{g}}_{\mathbb{C}})/{\mathcal{J}})^{{\mathfrak{g}}_{\mathbb{C}}'}
={\mathbb{C}}
\]
if one of (therefore, all of)
 the equivalent conditions
 in Proposition \ref{prop:OKO}
 is satisfied, 
 see \cite[Lem.\ 3.4]{Ta}.  
In particular, 
the center $Z({\mathfrak{g}}_{\mathbb{C}}')$
 of the enveloping algebra $U({\mathfrak{g}}_{\mathbb{C}}')$
 of the subalgebra ${\mathfrak{g}}_{\mathbb{C}}'$
 acts as scalars
 on the minimal representation $\Pi$
 because the action factors
 through
 the following composition of homomorphisms:
\[
Z({\mathfrak{g}}_{\mathbb{C}}') \to 
U({\mathfrak{g}}_{\mathbb{C}})/{\mathcal{J}}
\to
\operatorname{End}_{\mathbb{C}}(\Pi).  
\]
Since any minimal representation is unitarizable
 by the classification \cite{Ta}, 
 and since there are at most finitely many elements 
 in $\operatorname{Irr}(G')$ 
having a fixed 
$Z({\mathfrak{g}}_{\mathbb{C}}')$-infinitesimal character, 
 the restriction $\Pi|_{G'}$ splits
 into a direct sum of at most finitely many 
 irreducible representations of $G'$, 
 with multiplicity being finite 
 by Theorem \ref{thm:minbdd}.  
Thus the proof of Theorem \ref{thm:irr} is completed.  
\end{proof}


\begin{thebibliography}{00}
\bibitem{B98}
R.\ Brylinski, Geometric quantization of real minimal nilpotent orbits, 
Differential Geom.\ Appl.\ {\bf{9}} (1998), 5--58.

\bibitem{C93}
D.\ H.\ Collingwood, W.\ M.\ McGovern, Nilpotent orbits in semisimple 
Lie algebras, Van Nostrand Reinhold Mathematics Series, Van Nostrand 
Reinhold Co., New York, 1993.

\bibitem{GS04}
W.\ T.\ Gan, G.\ Savin, Uniqueness of Joseph ideal, 
 Math. Res. Lett. {\bf{11}} 
(2004), 589--597.

\bibitem{GS05}
W.\ T.\ Gan, G.\ Savin, 
On minimal representations definitions and properties, 
Represent.\ Theory {\bf{9}} (2005), 46--93. 

\bibitem{huwu90}
A.\ T.\ Huckleberry, T.\ Wurzbacher, 
 Multiplicity-free complex manifolds.
 Math.\ Ann.\ {\bf{286}} (1990), 261--280.  

\bibitem{J79}
A.\ Joseph, 
The minimal orbit in a simple Lie algebra
 and its associated maximal ideal, 
 Ann.\ Sci.\ Ecole Norm.\ Sup.
 {\bf{9}} (1976), 1--29.  

\bibitem{Ki}
M.\ Kitagawa,
Uniformly bounded multiplicities, polynomial identities and 
 coisotropic actions, 
arXiv:2109.05555v2. 

\bibitem{xkInvent94}
T.~Kobayashi, 
{\textit{Discrete decomposability of the restriction of
             $A_{\frak q}(\lambda)$
            with respect to reductive subgroups and its applications}}, 
Invent. Math.,
{\bf{117}} 
(1994), 
%\href{http://dx.doi.org/10.1007/BF01232239}
{pp.~181--205}.  

\bibitem{K95}
T.\ Kobayashi, Introduction to harmonic analysis on real spherical 
homogeneous spaces, In: Proceedings of the 3rd Summer School on Number Theory
\lq\lq Homogeneous Spaces and Automorphic Forms\rq\rq\ in Nagano, (ed. \ F.\ 
Sato), 1995,  22--41.

\bibitem
{xkAnn98}
T.~Kobayashi, 
Discrete decomposability of the restriction of
             $A_{\frak q}(\lambda)$
            with respect to reductive subgroups {\rm{II}}---micro-local analysis and asymptotic $K$-support, 
Ann. of Math., 
{\bf {147}} 
(1998), 
\href{http://dx.doi.org/10.2307/120963}
{pp.~709--729}.  

\bibitem{xkInvent98}
T.~Kobayashi, 
{\textit{Discrete decomposability of the restriction of
             $A_{\frak q}(\lambda)$
            with respect to reductive subgroups {\rm{III}}---restriction of Harish-Chandra modules
 and associated varieties}}, 
Invent. Math., {\bf{131}} (1998), 
%\href{http://dx.doi.org/10.1007/s002220050203}
{pp.~229--256}.  

\bibitem{K11Zuckerman}
T. Kobayashi, 
\textit{Branching problems of Zuckerman derived functor modules}, 
In: Representation Theory and Mathematical Physics
---In Honor of G.~Zuckerman,
(eds.\ J.\ Adams, B.\ Lian, and S.\ Sahi), 
Contemp. Math., 
{\bf{557}}, 
%\href{http://www.ams.org/books/conm/557/11024}
{23--40}, 
Amer. Math. Soc., Providence, RI, 2011.


\bibitem{K14}
T.\ Kobayashi, Shintani functions, real spherical manifolds, and 
symmetry breaking operators, 
Dev.\ Math., {\bf{37}}, (2014), 127--159.  

\bibitem{xKVogan2015}
T.~Kobayashi, 
A program for branching problems in the representation 
theory of real reductive groups, 
In: Representations of Reductive 
Groups---In Honor of the 60th Birthday of David A.\ Vogan,
Jr., 
Progr. Math., 
{\bf{312}}, 
Birkh{\"a}user, 
2015,
\href{http://dx.doi.org/10.1007/978-3-319-23443-4_10}
{277--322}.  

\bibitem{K21}
T.~Kobayashi, 
Branching laws of unitary representations
 associated to minimal elliptic orbits 
 for indefinite orthogonal group $O(p,q)$, 
Adv.~Math., {\bf{388}}, (2021), 
Paper No. 107862, 38pp.  

\bibitem{K21Kostant}
T.\ Kobayashi, 
Admissible restrictions of irreducible representations
 of reductive Lie groups: 
 Symplectic geometry and discrete decomposability, 
Pure and Applied Mathematics Quarterly
 {\bf{17}}, 
 (2021), 
pp.\ 1321--1343, 
Special volume in memory of Bertram Kostant.  

\bibitem{K22}
T.\ Kobayashi, Bounded multiplicity theorems for induction and 
 restriction. J.\ Lie Theory {\bf{32}} (2022), 197--238.

\bibitem{K22PJA}
T.\ Kobayashi, 
Multiplicity in restricting small representations, 
Proc.\ Japan Acad., {\bf{98}}, Ser.\ A (2022), pp.\ 19--24.  

\bibitem{xKMt}
T. Kobayashi, T. Matsuki, 
Classification of finite-multiplicity symmetric pairs,
Transform.\ Groups, 
 {\bf{19}} 
(2014), 
\href{http://dx.doi.org/10.1007/s00031-014-9265-x }
{457--493}, 
Special issue in honor of Dynkin
 for his 90th birthday. %, 

\bibitem{KOr}
T.\ Kobayashi, B.\ \O rsted, 
Analysis on the minimal representation of $O(p,q)$. II. 
Branching laws. Adv. Math. {\bf{180}} (2003), 513--550. 

\bibitem{KOP}
T.\  Kobayashi, B.\ \O rsted, M.\ Pevzner,
 Geometric  analysis on small unitary representations of $GL(N,{\mathbb{R}})$, 
J.\ Funct.\ Anal.\ {\bf{260}} (2011), 1682--1720. 

\bibitem{KOPU09}
T.\ Kobayashi, B.\ {\O}rsted, M.\ Pevzner, A.\ Unterberger, 
Composition formulas
 in the Weyl calculus, 
 J.\ Funct.\ Anal., 
{\bf{257}} (2009), 948--991.  


\bibitem{xktoshima}
T.~Kobayashi, T.~Oshima, 
Finite multiplicity theorems for induction and restriction, 
Adv. Math., \textbf{248} (2013), 
\href{http://dx.doi.org/10.1016/j.jfa.2010.12.008}
{921--944}. %, 

\bibitem{KO15}
T.\ Kobayashi, Y.\ Oshima, 
Classification of symmetric pairs with discretely decomposable 
 restrictions of $(\mathfrak g,K)$-modules. 
  J.\ Reine Angew.\ Math.\ {\bf{703}} (2015), 201--223.

\bibitem{KS15}
T. Kobayashi, B. Speh,
Symmetry Breaking
 for Representations of Rank One
 Orthogonal Groups, 
(2015), 
 Mem.\ Amer.\ Math.\ Soc. 
 {\bf{238}} 
\href{http://dx.doi.org/10.1090/memo/1126}
{no.1126}, 
 118 pages.  

\bibitem{xksbonvec}
T.~Kobayashi, B.~Speh, 
Symmetry Breaking for Representations of Rank One Orthogonal Groups, 
Part II, 
\href{https://arxiv.org/abs/1801.00158}
Lecture Notes in Math., {\bf{2234}} 
Springer, 2018.  
xv$+$342 pages.  

\bibitem{xkramer}
M.~Kr{\"a}mer, 
Multiplicity free subgroups
 of compact connected Lie groups,
 Arch. Math. (Basel)
 {\bf{27}}, (1976), 28--36.  
%
\bibitem{Ku}
S.\ S.\ Kudla, Seesaw dual reductive pairs. Automorphic forms of several 
variables,  Progr.\ Math., {\bf{46}}, (1984) 244--268. 

\bibitem{O15}
T.\ Okuda, Smallest complex nilpotent orbits with real points. J.\ Lie 
Theory {\bf{25}} (2015), 507--533. 

\bibitem{Ta}
H.\ Tamori, 
Classification of minimal representations of real simple Lie 
groups. 
Math.\ Z.\ {\bf{292}} (2019), 387--402.

\bibitem{Tu}
T.\ Tauchi, 
A generalization of the Kobayashi--Oshima
 uniformly bounded multiplicity theorem, 
Internat.\ J.\ Math., {\bf{32}} (2021), 
2150107.

\bibitem{Tr}
P.\ Torasso, M{\'e}thode des orbites de Kirillov--Duflo
 et repr{\'e}sentations minimales des groupes simples sur un corps local de 
caracteristisque nulle, Duke Math.\ Journal {\bf{90}} (1997), 261--377. 

\bibitem{WaI}
N. R. Wallach,
Real reductive groups. I, II, 
Pure Appl.\ Math. 
{\bf{132}} 
 Academic Press, Inc., Boston, MA, 1988;
{\bf{132}}-II, ibid, 1992. 
\end{thebibliography}
\end{document}